\newcommand{\p}{\mathbb{P}_{C}}
\newenvironment{manualtheorem}[1]{%
  \manualtheoreminner
}{\endmanualtheoreminner}
\newenvironment{manualquestion}[1]{%
  \manualquestioninner
}{\endmanualquestioninner}
\newenvironment{manualproblem}[1]{%
  \manualprobleminner
}{\endmanualprobleminner}
\newenvironment{manualcorollary}[1]{%
  \manualcorollaryinner
}{\endmanualcorollaryinner}
\newtheorem{lemma}{Lemma}[section]
\newtheorem{theorem}[lemma]{Theorem}
\newtheorem{corollary}[lemma]{Corollary}
\theoremstyle{definition}
\newtheorem{remark}[lemma]{Remark}
\numberwithin{equation}{section}
\begin{document}

\title[Automorphisms of Hilbert schemes and symmetric powers]{Automorphisms of  punctual Hilbert schemes and symmetric powers of varieties}

\author[A. Bansal]{Ashima Bansal}

\address{Indian Institute of Science Education and Research Tirupati, Srinivasapuram, Yerpedu Mandal, Tirupati Dist, Andhra Pradesh, India – 517619.}

\email{ashimabansal@students.iisertirupati.ac.in}

\author[S. Sarkar]{Supravat Sarkar}

\address{\scshape Fine Hall, Princeton, NJ 700108}

\email{ss6663@princeton.edu}

\author[S. Vats]{Shivam Vats}

\address{}

\email{shivamvatsaaa@gmail.com}

\subjclass[2010]{ 14J50}

\keywords{Punctual Hilbert scheme, Symmetric power,  Automorphism}

\begin{abstract}
We classify complex smooth projective surfaces whose punctual Hilbert scheme has a non-natural automorphism preserving the big diagonal. This completely answers a question raised by Belmans, Oberdieck and Rennemo, and extends previous works by Boissi{\`e}re-Sarti, Hayashi, Sasaki, Girardet and Wang. We reduce this to studying the existence of non-natural automorphisms of symmetric powers. We study this question for higher dimensional varieties too, giving some sufficient conditions guaranteeing every automorphism of a symmetric power to be natural. As a corollary, we characterize smooth projective surfaces of Kodaira dimension $\geq 1$ whose punctual Hilbert scheme has a non-natural automorphism, this time not assuming the automorphism preserves the big diagonal. We also address the question, when a smooth projective variety is determined up to isomorphism by its punctual Hilbert scheme.
\end{abstract}
\maketitle

\section{Introduction}
\noindent 
We work throughout, over the field $\mathbb{C}$ of complex numbers. For a smooth projective variety $X$ of dimension $n$, the punctual Hilbert scheme Hilb$^m(X)$ parametrizing the length $m$ subschemes of $X$ is a widely studied object. \cite{fogarty1968algebraic} showed Hilb$^m(X)$ is always smooth irreducible if $n=2$. For $n>2$ and $m>3$, Hilb$^m(X)$ is always singular by \cite{skjelnes2020smooth}. For a K3 surface $X$, Hilb$^m(X)$ gives an example of the hyperkahler variety.

For a smooth projective surface $X$, the study of automorphisms of Hilb$^m(X)$ is a fascinating area of research. Any automorphism of $X$ induces an automorphism of Hilb$^m(X)$, these are called \textit{natural} automorphisms. There might be non-natural automorphisms of Hilb$^m(X)$: for $X$ a general quartic K3 surface, the automorphism of Hilb$^2(X)$ sending a subscheme to the residual subscheme of the line passing through it is a non-natural automorphism, see {\cite[Section 6]{beauville1983some}}. One sees that any natural automorphism $\phi$ of Hilb$^m(X)$ preserves the big diagonal $E$ parametrizing nonreduced subschemes of $X$, in the sense that $\phi(E)=E.$ The above automorphism does not preserve $E$, so one can ask whether any automorphism of $\text{Hilb}^m(X)$ preserving $E$ is natural. The answer is again no, and it is easy to give a non-natural automorphism of Hilb$^2(C_1\times C_2)$, where $C_1$ and $C_2$ are curves, see \cite{belmans2020automorphisms}. Based on this, \cite{belmans2020automorphisms} asks the following question:

\begin{manualquestion}{1}\label{q}
Suppose $X$ is a smooth projective surface and $\phi$ an automorphism of $\emph{Hilb}^m(X)$ preserving the big diagonal. Excluding the case $X=C_1\times C_2$ and $m=2$, does it follow that $\phi$ is natural?
\end{manualquestion}

 \cite{belmans2020automorphisms} answers Question \ref{q} affirmatively for $X$ weak Fano or general type. By \cite{boissiere2012note} and {\cite[Theorem 1.2]{hayashi2015universal}}, Question \ref{q} has an affirmative answer when $X$ is a K3 surface or an Enriques surface. Using {\cite[Theorems 2.6, 2.7]{Ha}} and {\cite[Proposition 9]{belmans2020automorphisms}}, one sees that Question \ref{q} has an affirmative answer for surfaces with irregularity $0$. \cite{Gir} shows Question \ref{q} has positive answer for abelian surfaces of Picard rank $1$. On the other hand, \cite{Sas} and \cite{Gir} give a negative answer to Question \ref{q} for some abelian surfaces $X.$ So, it makes sense to consider the following problem:

\begin{manualproblem}{1}\label{p}
Classify the pairs $(m,X)$ of smooth projective surfaces $X$ and integers $m\geq 2$ such that $\emph{Hilb}^m(X)$ has a non-natural automorphism preserving the big diagonal.   
 \end{manualproblem}

One of the goals of this paper is to completely solve Problem \ref{p}. Even when dim $X\geq 3$, we want to give sufficient conditions for any automorphism of $\text{Hilb}^m(X)$ preserving the big diagonal to be natural. We assume $\text{Hilb}^m(X)$ is smooth. For dim $X\geq 3$, this is equivalent to saying $m\leq 3$ by \cite{skjelnes2020smooth}. Note that the big diagonal is the exceptional divisor of the Hilbert-Chow morphism $\text{Hilb}^m(X)\to S^mX,$ where $S^m X$ is the $m$'th symmetric power of $X$. To state our result, we need the following notation: let $\mathcal{C}$ be the class of smooth projective surfaces $X$ which are of the form $(D\times Y)/G$, where $D$ is an elliptic curve, $Y$ a smooth curve of general type, and $G$ is a finite subgroup of $\text{Aut}^0(D)$ acting diagonally on $D\times Y.$ This is what was called class $E$ in {\cite[Theorem D]{Fon}}. In this case, $X$ is minimal of Kodaira dimension $1$, and the Iitaka fibration of $X$ is an isotrivial elliptic fibration.

Our result is the following, generalizing the previous works stated above:

\begin{manualtheorem}{A}\label{1}
    \begin{enumerate}
        \item Let $X$ be a smooth projective surface and $m\geq 2$ be an integer. Then $\emph{Hilb}^m(X)$ has a non-natural automorphism preserving the big diagonal if and only if one of the following holds:
        \begin{enumerate}
            \item $m=2$ and $X$ is a product of curves,
            \item $X$ is an abelian surface isogenous to the square of an elliptic curve,
            \item $X$ is a simple abelian surface and $\emph{End}_{\mathbb{Q}}(X)\,:=\,\emph{End}(X)\,\otimes\, \mathbb{Q}$ is not $\mathbb{Q}$ or an imaginary quadratic extension of $\mathbb{Q}$,
            \item $X$ is in class $\mathcal{C}$, and if $C$ and $E$ are the base and the fibre of the Iitaka fibration of $X$, then there is a non-constant map $C\to E.$
        \end{enumerate}
        \item Let $X$ be a smooth projective variety of dimension $n\geq 3$, and let $m\,=\,2$ or $3$. Suppose either $H^{1}(X, \mathcal{O}_{X}) = 0$ or ${K}_{X}$ is ample. Also, assume $X$ is indecomposable if $m\,=\,2$. Then any automorphism of $\emph{Hilb}^{m}(X)$ preserving the big diagonal is natural.
    \end{enumerate}
\end{manualtheorem}

The negative answers to Question \ref{q} due to $(b)$ and $(c)$ of Theorem \ref{1} are of a similar nature as \cite{Sas}, $(d)$ gives a new class of negative answers to Question \ref{q}, and our theorem says that these are all the negative answers.

As a corollary, for surfaces $X$ of Kodaira dimension $\geq 1$, we can characterize when $\textrm{Hilb}^{m}(X)$ has a non-natural automorphism. Here, we are not assuming that the automorphism preserves the big diagonal.

\begin{manualcorollary}{A}\label{c1}
    Let $X$ be a smooth projective surface of Kodaira dimension $\kappa(X)\geq 1$, and let $m\geq 2$ be a positive integer. Then $\textrm{Hilb}^{m}(X)$ has a non-natural automorphism if and only if $(a)$ or $(d)$ in Theorem \ref{1} holds.
\end{manualcorollary}

Theorem \ref{1} will be an immediate consequence of Theorems \ref{2}, \ref{3} and \ref{4}. In Theorem \ref{2}, we reduce Theorem \ref{1} to the analogous question for automorphism of symmetric power. Recall that for a variety $X$, and a positive integer $m$, the symmetric power $S^mX$ is the quotient $X^m/S_m,$ where the symmetric group $S_m$ on $m$ letters acts on $X^m$ by permuting the factors. 

\begin{manualtheorem}{B}\label{2}
    Let $m,n\,\geq\,2$ be integers, and either $n\,=\,2$ or $m\,\leq\, 3.$ Let $X$ be a smooth projective variety of dimension $n$. Then $\emph{Hilb}^{m}X$ has a non-natural automorphism preserving the big diagonal if and only if $S^{m}X$ has a non-natural automorphism.
\end{manualtheorem}

Now we investigate when the symmetric power of a smooth projective variety $X$ of dimension $n$ can have a non-natural automorphism. This seems to be the right analogue of Problem \ref{p} for higher dimensional $X$, at least when the Hilbert scheme is not irreducible. For $n=1$, this has been studied in \cite{ciliberto1993symmetric}, \cite{ciliberto1995singularities}, \cite{martens1965extended}, \cite{weil2009beweis} and \cite{biswas2017automorphisms}. \cite{belmans2020automorphisms}, \cite{Ha} and \cite{Og} studied this for $n\,=\,2$,  \cite{belmans2020automorphisms} for $X$ projective space and $m\,=\,2$, \cite{Wa} for $X$ a smooth projective hypersurface, $n\geq 3$ and $m\,=\,2$, {\cite[Theorem 3.1.1]{Sh}} for $X$ a simply connected variety of Picard rank $1$. For a variety $Z$, let us denote the group of automorphisms of $Z$ by Aut$(Z)$. Without any restriction on dimension $n\,\geq\, 2$, our following result gives a sufficient criterion for every automorphism of $S^m X$ to be natural, generalizing {\cite[Theorem 2.7]{Ha}}.

\begin{manualtheorem}{C}\label{3}
 Let $X$ be a smooth projective variety of dimension $n\, \geq\, 2$, and $m\,\geq\,2$ is an integer. Then we have:
 
\begin{enumerate}
\item[(i)] Suppose either $H^{1}(X, \mathcal{O}_{X})\, =\,0$, or $K_X$ is ample. If $m\,\geq\, 3$, every automorphism of $S^{m}(X)$ is natural. For $m\,=\,2$, we have a short exact sequence of groups $$1\,\longrightarrow\, (\mathbb{Z}/2\mathbb{Z})^{k-1}\,\longrightarrow\, \textnormal{ Aut} (S^2X)\,\longrightarrow\, \textnormal{Aut} (X)\,\longrightarrow \,1,$$ and the inclusion of $\textnormal{Aut} (X)$ in$\textnormal{ Aut} (S^2X)$ as the subgroup of natural automorphisms induces a splitting of this short exact sequence. Here $k$ is the number of factors in a product decomposition of $X$ into indecomposable varieties.
\item [(ii)] Let $m\,\geq\,2$ be an integer, $Y$ a smooth projective variety. If dim $Y\,\geq\, 2$, assume every automorphism of $S^mY$ is natural. Let $X$ be a $\mathbb{P}^r$-bundle over $Y$. Assume that every map $\mathbb{P}^r\to Y$ is constant, and if $m\,=\,2,$ $X\,\not\,\cong\, \mathbb{P}^r\,\times\, Y $ over $Y$. Then every automorphism of $S^m X$ is natural.
\end{enumerate}
\end{manualtheorem}

Finally, we restrict our attention when $X$ is a surface. We completely describe the pairs $(m,X)$ for which $S^mX$ has a non-natural automorphism.

\begin{manualtheorem}{D}\label{4}
Let $X$ be a smooth projective surface and $m\,\geq\, 2$ be an integer. Then $S^m(X)$ has a non-natural automorphism if and only if one of the following holds:
\begin{enumerate}
\item $m\,=\,2$ and $X$ is a product of curves,
\item $X$ is an abelian surface isogenous to the square of an elliptic curve,
\item $X$ is a simple abelian surface and $\emph{End}_{\mathbb{Q}}(X)\,:=\,\emph{End}(X)\,\otimes\, \mathbb{Q}$ is not $\mathbb{Q}$ or an imaginary quadratic extension of $\mathbb{Q}$,
\item $X$ is in class $\mathcal{C}$, and if $C$ and $E$ are the base and the fibre of the Iitaka fibration of $X$, then there is a non-constant map $C\,\to\, E.$
\end{enumerate}
\end{manualtheorem}

We prove Theorem \ref{2} in \S 3. There we also prove Lemma \ref{auto of Pbundle} classifying some projective bundles which has an automorphism not over an automorphism of the base, which is of independent interest. We prove Theorems \ref{3} $(i)$, \ref{4} and Corollary \ref{c1} in \S 4. Finally in \S 5 we prove Theorem \ref{4} and \ref {3} $(ii)$. Theorem \ref{1} is an immediate consequence of Theorems \ref{2}, \ref{3}  $(i)$ and \ref{4}. We also address the question when a smooth projective variety is determined up to isomorphism by its punctual Hilbert scheme in Remark \ref{determine} and \ref{determine1}, extending {\cite[Corollary 5]{belmans2020automorphisms}} and {\cite[Theorem 1.4]{Ha}}.
\section{Notations and conventions}

\begin{itemize}
\item A \textit{variety} is an integral separated scheme of finite type over $\mathbb{C}$. Unless otherwise stated, any point of a variety will mean a closed point.
\item A projective variety $Z$ is called \textit{indecomposable} if $Z$ cannot be written as $Z_{1}\,\times\, Z_{2}$ where neither $Z_{1}$ nor $Z_{2}$ is a point.
\item A proper morphism $f\,:\,Y\,\to\, X$ between varieties is said to be a \textit{contraction} if $f_* \mathcal{O}_Y\,=\,\mathcal{O}_X$. We shall occasionally use the following standard fact: if $f\,:\,Y\,\to\, X$ is a contraction of normal varieties and $g\,:\,Y\,\to\, W$ is a morphism of normal varieties such that every fibre of $f$ is mapped to a point by $g$, then $g$ factors uniquely through $f.$
    \item If $X$ is a smooth projective surface and $C$ a smooth projective curve, a contraction $f\,:\, X\,\to\, C$ is called \textit{isotrivial} if there is a dense open set $U$ in $C$ such that all fibres of $f$ over points of $U$ are isomorphic. We call $f$ \textit{non-isotrivial} if it is not isotrivial.
    \item The identity automorphism for a variety will be denoted by $id$.
    \item Let $f\,:\,X\,\to\, Y$ be a surjective  proper morphism of normal varieties. Given an automorphism $\phi$ of $X,$ we say $\phi$ \textit{descends} to an automorphism $\psi$ of $Y$ if the following diagram commutes.

\begin{equation}\label{descend diagram}
\begin{tikzcd}
X \arrow[r, "\phi"] \arrow[d,"f"]
& X \arrow[d, "f" ] \\
Y  \arrow[r,"\psi" ]
& |[, rotate=0]| Y
\end{tikzcd}
\end{equation}

In this case we say $\phi$ is \textit{over an automorphism of} $Y$, and \textit{over} $Y$ if $\psi\,=\,id.$
In many places in this article, we would want to descend an automorphism $\phi$ of $X$ to $Y$. The way we will do it is the following, which we will not elaborate later:  we will first descend $\phi$ to a morphism $\psi\,:\,Y\,\to\,Y$ making diagram \ref{descend diagram} commute. Then the same argument will show that $\phi^{-1}$ descends to a morphism $\psi_1\,:\,Y\,\to\, Y$. Now surjectivity of $f$ implies $\psi$ and $\psi_1$ are inverses to each other, so both are automorphisms of $Y$.

   In fact, one can directly show that if $\phi$ descends to a morphism $\psi\,:\,Y\,\to\, Y$, then $\psi$ is necessarily an automorphism of $Y$, but we will not need this fact.
  \item  Let $f\,:\,X\,\to\, Y$ be a surjective  proper morphism of normal varieties. Given an automorphism $\psi$ of $Y,$ we say $\psi$ \textit{lifts} to an automorphism $\phi$ of $X$ if diagram \ref{descend diagram} commutes.

  \item For a variety $X$, Sing $X$ denotes the singular locus of $X.$
  \item For a point $x$ in a variety $X$, $T_x X$ denotes the Zariski tangent space of $X$ at $x.$ For a smooth variety $X$, the tangent and cotangent bundles are denoted by $T_X$ and $\Omega_X$, respectively.

  \item For a group homomorphism $ G_{1}\, \xlongrightarrow{f} \,G_{2},$ we denote the image of $f$ by $ \textnormal{im}(f).$ For a subgroup $H$ of a group $G$, we denote the normalizer of $H$ in $G$ by  $\textnormal{N}_{G}(H).$  Two subgroups $ H_{1},H_{2}$ of a group $G$ are said to commute if $ \textnormal{for all}\hspace{3pt} h_{1}\,\in\, H_{1}$, $h_{2} \in H_{2},$ we have $ h_{1}h_{2}\,=\, h_{2}h_{1}.$ The center of a group $G$ is denoted by $ \mathcal{Z}(G).$ 
  \item For a point $p$ on a complex analytic set $X$ we denote the germ of $X$ at $p$ by $\langle X, p\rangle$.
  \item Recall the following notation from \cite{BSV3}. For a smooth variety $Y$ of dimension $n\,\geq\, 2$, and $ \pi\,=\, (a_{1},\cdots,a_{k})$ a partition of a positive integer $m$. Let 
  $$ W_{\pi,m}(Y)\,=\, \{ \sum_{i=1}^{k}a_{i}y_{i}\, \in\, S^{m}Y \, \, | \, \, y_{1},y_{2},\cdots,y_{k} \in Y\}$$ and
  $$ W_{\pi,m}^{\circ}(Y)\,=\, \{ \sum_{i=1}^{k}a_{i}y_{i}\, \in\, S^{m}Y \, \, | \, \, y_{1},y_{2},\cdots,y_{k} \in Y \text{ are distinct}\}.$$
We would write them as $W_{\pi}$ and $W_{\pi}^{\circ}$ when $m$ and $Y$ are clear from the context.  For $ \pi\,=\, (2, \underline{1}),$ we would abbreviate $W_{\pi}$ and $W_{\pi}^{\circ}$ as $W$ and $W^{\circ}$, respectively. Also, for a partition $ \pi\,=\, (a_{1},\cdots,a_{k})$ of a positive integer $m$, we define $|\pi|\,:=\,k.$
  \item \label{notation} For a smooth variety $Y$ of dimension $n\geq 2$, and the Hilbert-Chow morphism $\textrm{Hilb}^{m}(Y)\,\xrightarrow{\,\,\,p\,\,\,}\, S^{m} Y$, define $E_{{\pi},m}(Y)\,=\, p^{-1}(W_{\pi})$, $E^{\circ}_{{\pi},m}(Y)\,=\, p^{-1}(W^{\circ}_{\pi})$, under reduced induced structures. We would write them as $E_{\pi}$ and $E_{\pi}^{\circ}$ when $m$ and $Y$ are clear from the context. By {\cite[Lemma 3.6.1]{DMM}}, we have $E_{\pi}=\overline{E_{\pi}^{\circ}}$ for all $\pi$. For $ \pi\,=\, (2, \underline{1}),$ we would abbreviate $E_{\pi}$ and $E_{\pi}^{\circ}$ as $E$ and $E^{\circ}$, respectively. 
   \item For a smooth projective variety $X$ An automorphism of $\textnormal{Hilb}^{m}(X), S^mX$ or $X^m$ is called \textit{natural} if it is induced by an automorphism of $X$, and called \textit{non-natural} if it is not natural.
  \item For a smooth projective variety $X$, Aut $X$ denotes the automorphism group scheme of $X$. For a morphism $X\to Y$ of smooth projective varieties, Aut$_Y X$ denotes the closed subgroup scheme of Aut $X$ consisting of automorphisms over $Y$. Aut$^{\circ} X$ and Aut$^{\circ}_Y X$ denote the connected component of identity of these group schemes.
  \item For a normal projective variety $X$, $\rho(X)$ denotes the Picard rank of $X$. For a contraction $X\to Y$ of normal projective varieties, the relative Picard rank of $X$ over $Y$ will be denoted by $\rho(X/Y).$
  \item  We follow the convention of projective bundle as in {\cite[Chapter 2, Section 7]{Hart}}. For a smooth variety $X$, a $\mathbb{P}^r$-bundle over $X$ means the projectivization of a vector bundle of rank $r+1$ on $X$. A $\mathbb{P}$-bundle means a $\mathbb{P}^r$-bundle for some $r.$ Two $\mathbb{P}$-bundle structures on $Y$ given by $f_i:Y\to X_i$ for $i=1,2$ are considered same if there is an isomorphism $\psi: X_1\to X_2$ such that $f_2=\psi\circ f_1.$
  \item For a variety $X$ and a positive integer $m$, we denote the diagonal in $X^m$ by $\Delta _X.$
  \item For a morphism of varieties $f:X\to Y$, the induced morphism $X^m\to Y^m$ will be denoted by $f_{(m)}.$
 
  \item The genus of a smooth projective curve $C$ will be denoted by $g(C).$ For a smooth projective variety $X$, $a(X)$ will denote the Albanese dimension of $X$, that is, the dimension of the image of the Albanese map $X\,\to\, \textrm{Alb}(X)$.
  \item For a variety $X$, $\underline{x}\,=\,(x_i)_i\,\in\, X^m$ and $\sigma\in S_m$, $\underline{x}_{\sigma}$ will denote the element $(x_{\sigma(i)})_i$ of $X^m.$
  \item For a ring $R$, its group of units will be denoted by $R^{\times}$. We shall use the following standard fact: if $K$ is a finite dimensional $\mathbb{Q}$-algebra, and one order in $K$ has infinitely many units, then each order in $K$ has infinitely many units.
  \item For an abelian variety $X$, $\textrm{End}(X)$ will denote the endomorphism ring of $X$, which is an order in the finite dimensional $\mathbb{Q}$-algebra $\textrm{End}_{\mathbb{Q}}(X)\,:=\,\textrm{End}(X)\,\otimes\, \mathbb{Q}$. The above fact implies that if $X$ and $Y$ are isogenous abelian varieties and $\textrm{End}(X)^{\times}$ is infinite, then $\textrm{End}(Y)^{\times}$ is infinite too.
\end{itemize}

\section {Descent and lift of automorphisms}
In this section, we prove Theorem \ref{2}. It follows immediately from the following two theorems.
\begin{theorem}\label{descent}
Under the assumptions of Theorem \ref{2}, any automorphism of $\emph{Hilb}^m(X)$ that preserves the big diagonal descends to an automorphism of $S^m X$ under the Hilbert-Chow morphism. 
\end{theorem}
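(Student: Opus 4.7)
The plan is to apply the rigidity principle from Section 2 to the Hilbert--Chow contraction $p:\text{Hilb}^m X\to S^m X$. Being a birational morphism of projective varieties with normal target, $p$ satisfies $p_*\mathcal{O}_{\text{Hilb}^m X}=\mathcal{O}_{S^m X}$, so $\phi$ descends along $p$ to a morphism $\psi:S^m X\to S^m X$ if and only if $p\circ\phi$ is constant on every fiber of $p$, equivalently if and only if $\phi$ sends every curve contracted by $p$ to a curve contracted by $p$. Applying the same reasoning to $\phi^{-1}$ will automatically upgrade $\psi$ to an automorphism. Outside $E=p^{-1}(W)$ the condition is automatic since $\phi(E)=E$ and $p$ is an isomorphism on the complement; the content lies in the fibers of $p|_E:E\to W$.

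Under the hypotheses of Theorem \ref{2}, the divisor $E$ is irreducible (being the closure $\overline{E^{\circ}}$ of a single stratum), and over the open stratum $W^\circ$ the restriction $p|_{E^\circ}$ is a Zariski-locally trivial $\mathbb{P}^{n-1}$-bundle. A local computation gives $N_{E/\text{Hilb}^m X}|_{\mathbb{P}^{n-1}}\cong\mathcal{O}(-2)$: for $m=2$ this follows from the description $\text{Hilb}^2 X=\text{Bl}_{\Delta}(X^2)/S_2$ together with the fact that the $S_2$-action on the normal bundle of the exceptional divisor of the blow-up is by $-1$, so the quotient doubles the normal bundle; for $n=2$ and arbitrary $m$ it follows from Fogarty's theory. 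In particular a line $\ell$ in a generic $\mathbb{P}^{n-1}$-fiber satisfies $E\cdot\ell=-2$. The key structural input is that the relative Picard group $\text{Pic}(\text{Hilb}^m X/S^m X)$ is generated, modulo torsion, by $[E]$; this is immediate for $m=2$ from the blow-up description and classical for $n=2$ by Fogarty.

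To conclude, let $L$ be ample on $S^m X$ and put $M=p^*L$. Since $\phi(E)=E$ as a reduced divisor, $\phi^*[E]=[E]$ in $N^1(\text{Hilb}^m X)$. By the relative Picard generation, there exist $L_1\in\text{Pic}(S^m X)$ and $a\in\mathbb{Z}$ with $\phi^*M\equiv p^*L_1+aE$ numerically. Pairing with $\ell$ gives $\phi^*M\cdot\ell=0+a(-2)=-2a$. But $\phi^*M=(p\circ\phi)^*L$ is semi-ample and hence nef, so $\phi^*M\cdot\ell\geq 0$, forcing $a\leq 0$. The identical argument applied to $\phi^{-1}$ yields $a\geq 0$, whence $a=0$. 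Therefore for any curve $C$ contracted by $p$, $M\cdot\phi_*C=\phi^*M\cdot C=p^*L_1\cdot C=0$, and ampleness of $L$ forces $\phi(C)$ to be contracted by $p$ as well. Using irreducibility of the fibers of $p$ (by Brian\c con's theorem for $n=2$, and by direct verification for $m\leq 3$) together with the curve-connectedness of irreducible varieties, this promotes to fiber-level preservation: for each fiber $F=p^{-1}(z)$, $\phi(F)$ is contained in a single fiber of $p$, so $p\circ\phi$ is constant on $F$. Rigidity then gives the factorization $p\circ\phi=\psi\circ p$ for a morphism $\psi$, and the $\phi^{-1}$-argument shows $\psi$ is an automorphism.

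The main obstacle is the structural claim $\text{Pic}(\text{Hilb}^m X/S^m X)\cong\mathbb{Z}[E]$ modulo torsion, together with the intersection calculation $E\cdot\ell=-2$; for the surface case these reduce to standard consequences of Fogarty's theorem and Brian\c con's irreducibility of punctual Hilbert schemes, while for $m\leq 3$ with $n\geq 3$ a direct local computation via the blow-up or the explicit smooth structure of $\text{Hilb}^m X$ is required. A minor subsidiary point is the promotion from curve-preservation to fiber-preservation, which uses only irreducibility of the fibers of $p$ and is routine.
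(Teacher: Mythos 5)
Your overall framework (reduce to showing that $\phi$ sends $p$-contracted curves to $p$-contracted curves, using $\rho(\mathrm{Hilb}^m X/S^mX)=1$ and then rigidity/connectedness of fibres) matches the paper's, and the preparatory facts you cite ($E\cdot\ell=-2$, relative Picard rank one, promotion from curves to fibres) are all correct or at least correctly flagged as needing verification. The problem is the single step on which everything hinges: the deduction $a=0$.

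You write $\phi^*M\equiv p^*L_1+aE$, pair with $\ell$ to get $a\le 0$ from nefness of $\phi^*M$, and then assert that ``the identical argument applied to $\phi^{-1}$ yields $a\ge 0$.'' It does not. The identical argument applied to $\phi^{-1}$ yields a decomposition $(\phi^{-1})^*p^*L'\equiv p^*L_1'+a'E$ with $a'\le 0$ for a \emph{different} coefficient $a'$, and there is no a priori relation between $a'$ and $a$. If you try to close the loop by applying $\phi^*$ to this identity, you get $p^*L'\equiv \phi^*p^*L_1'+a'E$, and to extract a sign from $\phi^*p^*L_1'$ you would need $L_1'=p_*\bigl((p\circ\phi^{-1})^*L'\bigr)$ to be ample (or at least positive on curves contained in $W=\mathrm{Sing}(S^mX)$); but the pushforward of a nef class under $p$ need not be nef precisely on curves inside $W$, because the correction term $|a'|\,E$ restricted to curves in $E$ lying over $W$ can be negative. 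Equivalently: knowing that $a=0$ for all ample $L$ is the same as knowing that $\phi^*$ preserves the subspace $p^*N^1(S^mX)$, i.e.\ that $\phi_*\ell$ is $p$-contracted --- which is exactly the statement to be proved. So the argument is circular at its crux, and linearity of $L\mapsto a(L)$ on the ample cone does not rescue it (a linear functional can be $\le 0$ on an open cone without vanishing).

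This is precisely the point where the paper has to work case by case with genuinely geometric input: for $m=2$ it identifies $E$ with $\mathbb{P}_X(\Omega_X)$ and invokes the classification of varieties with two projective-bundle structures (Lemma \ref{auto of Pbundle}, Corollary \ref{Pomega}) to show $\phi|_E$ respects the fibration $E\to X$; for $m=3$ it uses that the normalized punctual fibre $H_{3,n}$ has Picard rank $1$ and admits no non-constant map to $X\times X$; and for $n=2$, $m\ge 4$ it identifies the small diagonal $E_{(m)}$ intrinsically via iterated singular loci and concludes by a dimension count. Some substitute for one of these arguments is needed; the purely numerical nef-cone computation you propose does not supply it. (Secondary, acknowledged gaps --- irreducibility of $E$ for $m=3$, $n\ge 3$, needed for the relative Picard rank one claim --- are real but minor by comparison.)
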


\begin{theorem}\label{lift}
    
Under the assumptions of Theorem \ref{2}, any automorphism of $S^{m}X$ lifts to an automorphism of $\emph{Hilb}^{m}X$ that preserves the big diagonal. 
\end{theorem}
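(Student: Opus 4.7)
The plan is to first construct a birational candidate $\phi_0$ for the lift on the open set where $p$ is an isomorphism, and then to extend it across the exceptional divisor $E$. Since $\psi$ is an automorphism, it must preserve the singular locus of $S^m X$, which is precisely $W := W_{(2,1^{m-2})} = p(E)$. Thus $\psi$ restricts to an automorphism of $U := S^m X \setminus W$, and pulling back along the isomorphism $p^{-1}(U) \xrightarrow{\sim} U$ yields an automorphism $\phi_0$ of $\text{Hilb}^m X \setminus E$. The task is to extend $\phi_0$ to a morphism on all of $\text{Hilb}^m X$; once this is accomplished, applying the same argument to $\psi^{-1}$ produces a two-sided inverse, so the extension is an automorphism, and it preserves $E$ since $E$ is the exceptional divisor of $p$.

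For $m = 2$ (and any admissible $n \geq 2$), the extension follows from functoriality of the blow-up. A standard local computation identifies $\text{Hilb}^2 X$ with $\mathrm{Bl}_{\Delta_X}(S^2 X)$, the blow-up of $S^2 X$ along its reduced singular locus $\Delta_X \cong X$. Since $\psi(\Delta_X) = \Delta_X$, the automorphism $\psi$ lifts canonically to the blow-up, and by uniqueness the lift agrees with $\phi_0$ on the complement of $E$.

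For the remaining cases ($n = 2$ with any $m \geq 2$, or $n \geq 3$ with $m = 3$), I would argue by reducing to a canonical local model. Near a point $\xi = \sum a_i x_i \in S^m X$ of partition type $\pi = (a_1, \ldots, a_k)$, the Hilbert-Chow morphism admits an étale-local product decomposition $\prod_i \text{Hilb}^{a_i}(\mathbb{A}^n) \to \prod_i S^{a_i}(\mathbb{A}^n)$. The stratification of $S^m X$ by the loci $W_\pi$ is intrinsic (it encodes the local analytic singularity type), so $\psi$ permutes these strata among themselves. This reduces the extension of $\phi_0$ to showing, on each local factor, that the Hilbert-Chow morphism is canonically determined by $S^{a_i}(\mathbb{A}^n)$: for $n = 2$ this is the statement that $\text{Hilb}^{a_i}(\mathbb{A}^2) \to S^{a_i}(\mathbb{A}^2)$ is the unique crepant resolution of the quotient singularity $(\mathbb{A}^2)^{a_i}/S_{a_i}$, so any automorphism of the base lifts; for $n \geq 3$ with $m = 3$, $\text{Hilb}^3$ can be written as an explicit sequence of blow-ups of $S^3$ along canonically defined closed subschemes (successively resolving $W_{(3)}$ and then the transform of $W_{(2,1)}$), each step being functorial. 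In either case, the local extensions glue by uniqueness to a global morphism $\widetilde\psi : \text{Hilb}^m X \to \text{Hilb}^m X$.

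The main obstacle I anticipate is the case $n = 2$, $m \geq 3$, where $\text{Hilb}^m X \to S^m X$ is not realized as a single blow-up along a canonical subscheme, so one cannot invoke blow-up functoriality directly. Here one must appeal to a uniqueness statement for the crepant resolution of the quotient singularity — for instance, by reducing via the étale-local product decomposition to the case of $\mathbb{A}^2$ and invoking the Bridgeland–King–Reid / Haiman description of $\text{Hilb}^m(\mathbb{A}^2)$ as the unique projective crepant resolution. Granted such canonicity, the lift $\widetilde\psi$ exists globally, agrees with $\phi_0$ on the open set, and is therefore an automorphism of $\text{Hilb}^m X$ that necessarily preserves the big diagonal $E$.
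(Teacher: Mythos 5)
Your reduction to extending $\phi_0$ across $E$, and your treatment of $m=2$ via blow-up functoriality, are fine and match the paper's starting point. But in the two hard cases the argument has genuine gaps. For $n\geq 3$, $m=3$ you assert that $\mathrm{Hilb}^3X$ is an explicit iterated blow-up of $S^3X$ along canonically defined centers (first $W_{(3)}$, then the transform of $W_{(2,\underline{1})}$); this is not a known result and is not obviously true: the fiber of the Hilbert--Chow morphism over a point of $W_{(3)}$ is the $2(n-1)$-dimensional variety $H_{3,n}$ of Lemma \ref{Hilbert drum}, which is singular and not visibly the exceptional fiber of such a blow-up sequence, so this step needs an actual proof or a different idea. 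For $n=2$, $m\geq 3$ you lean on uniqueness of crepant resolutions; the global statement for $(\mathbb{C}^2)^m/S_m$ is a theorem, but you would need it for analytic or \'etale germs and for products of such germs, together with a local-to-global gluing of lifts. The gluing itself is harmless (a lift is unique because it is determined on the dense open set where $p$ is an isomorphism), but you are invoking heavy machinery whose local form you have not justified, and you yourself flag this step as conditional.

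The paper avoids all of this with one observation your plan is missing: you never need to understand the deep strata at all. Set $W=\mathrm{Sing}(S^mX)$, $W_1=\mathrm{Sing}(W)$ and $E_1=p^{-1}(W_1)$. Over $S^mX\setminus W_1$ the Hilbert--Chow morphism is precisely the blow-up along $W\setminus W_1$ (this classical fact covers all cases under consideration: $n=2$ with any $m$, and $m\leq 3$ with any $n$), so $\psi$ lifts over that locus by blow-up functoriality, and the lift preserves $E\setminus E_1$. Since $E$ is irreducible (Lemma \ref{rho}), $E_1$ has codimension $\geq 2$ in $\mathrm{Hilb}^mX$, and one can choose $\alpha\in\mathbb{Z}$ and an ample $L$ on $S^mX$ so that $p^{\ast}L\otimes\mathcal{O}(\alpha E)$ and $p^{\ast}\psi^{\ast}L\otimes\mathcal{O}(\alpha E)$ are both ample; the partial lift identifies these two ample bundles on the big open set, and an isomorphism between open subsets with codimension-$\geq 2$ complements of normal projective varieties that matches ample line bundles extends globally (normality makes the section spaces insensitive to removing the small closed sets, so one embeds by sections and takes closures). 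This elementary extension lemma is the ingredient your proposal lacks; with it, neither crepant-resolution uniqueness nor any explicit resolution structure of $\mathrm{Hilb}^3$ is needed.
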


We now need some lemmas.

 \begin{lemma}\label{Pbundle}
Let $X$ be a smooth projective variety of dimension $n$, endowed with two different $\mathbb{P}$-bundle structures 
$$ X \,\xlongrightarrow{\phi}\,Y, \quad  X\, \xlongrightarrow{\psi}\, Z.$$ 
Then the following holds:

$(i)$We have 
$$\textnormal{dim}\,\, Y\, +\, \textnormal{dim}\, \, Z \,\geq\, n,$$ 
with equality holds if and only if $Y$ and $Z$ are projective spaces and $X$ their product.

$(ii)$ If
$$ \textnormal{dim}\,\,Y\,+\, \textnormal{dim}\,\,Z\,=\, n\,+\,1, $$
then either:
$n\,=\,2m\,-\,1$, $Y\,=Z\,=\, \mathbb{P}^m$ and $X\,=\,\mathbb{P}(T_{\mathbb{P}^m})$; or $ Y$ and $Z$ each admit a $ \mathbb{P}$-bundle structure over a smooth curve $C$, and $ X\,=\, Y\, \times_{C}\, Z$.
\end{lemma}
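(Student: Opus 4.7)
The plan is to analyse the product map $\phi\times\psi\colon X\to Y\times Z$ by restricting each bundle projection to fibres of the other. First I would show that for a general fibre $F\cong \mathbb{P}^{n-y}$ of $\phi$, the restriction $\psi|_F\colon F\to Z$ is non-constant: otherwise $\psi$ factors as $g\circ \phi$ for some morphism $g\colon Y\to Z$, and then each fibre $\psi^{-1}(z)\cong \mathbb{P}^{n-z}$ would carry a $\mathbb{P}^{n-y}$-bundle structure over $g^{-1}(z)$. Since projective space has Picard rank one, this forces $g^{-1}(z)$ to be a single point, so $g$ is a finite birational morphism between smooth varieties, hence an isomorphism---contradicting that $\phi$ and $\psi$ give distinct $\mathbb{P}$-bundle structures.

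Next I would invoke the elementary observation that any non-constant morphism $f\colon \mathbb{P}^N\to V$ to a projective variety is finite: if $H$ is ample on $V$ then $f^*H\cong \mathcal{O}_{\mathbb{P}^N}(d)$ with $d\geq 1$, so $f^*H$ is ample and no curve can be contracted. Applied to $\psi|_F$, this gives $\dim\psi(F)=n-y$, and since $\psi(F)\subseteq Z$ we obtain $y+z\geq n$, proving the inequality in $(i)$. When $y+z=n$, the map $\psi|_F$ is finite surjective onto $Z$, so Lazarsfeld's theorem on finite covers forces $Z\cong \mathbb{P}^z$; symmetrically $Y\cong \mathbb{P}^y$. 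To conclude that $X\cong Y\times Z$, I would show that the finite surjective map $\phi\times\psi\colon X\to \mathbb{P}^y\times\mathbb{P}^z$ between smooth varieties of the same dimension is in fact an isomorphism, using that the underlying vector bundle presenting $X$ as a $\mathbb{P}^z$-bundle over $\mathbb{P}^y$ must be trivial (any non-trivial twist would obstruct the existence of the second $\mathbb{P}$-bundle structure).

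For part $(ii)$ with $y+z=n+1$, the same finiteness argument gives $\dim\psi(F_{y_0})=z-1$, so $D_{y_0}:=\psi(F_{y_0})$ is a divisor on $Z$ depending on $y_0\in Y$. The image $W:=(\phi\times\psi)(X)\subseteq Y\times Z$ has dimension exactly $n$ and is therefore a hypersurface, and the divisors $D_{y_0}$ (being the $Y$-fibres of $W$) are numerically equivalent; they lie in a fixed linear system $|L|$ on $Z$, producing a classifying morphism $g\colon Y\to |L|$. The dichotomy in $(ii)$ is then governed by $\dim g(Y)$: if $\dim g(Y)=1$, $g$ factors through a smooth curve $C=g(Y)$, the symmetric construction on the $\psi$-side yields a morphism $Z\to C$, and a direct comparison of fibres identifies $X$ with $Y\times_C Z$; if $\dim g(Y)\geq 2$, a rigidity analysis using Lazarsfeld's theorem on both sides forces $Y\cong Z\cong \mathbb{P}^m$ and identifies $W$ with the point-line incidence variety in $\mathbb{P}^m\times(\mathbb{P}^m)^*$, giving $X\cong \mathbb{P}(T_{\mathbb{P}^m})$. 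The principal obstacle I anticipate is pinning down precisely this last case and ruling out hybrid configurations in which the parameter space $g(Y)$ has intermediate dimension.
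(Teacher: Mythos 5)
The paper does not prove this lemma from scratch: it is quoted directly from \cite[Theorem 2]{occhetta2002euler} (the Sato/Occhetta--Wi\'sniewski classification of varieties with two projective-bundle structures). You are therefore attempting to reprove a genuinely nontrivial theorem, and while your overall strategy --- restrict $\psi$ to fibres of $\phi$, use that a non-constant map out of $\mathbb{P}^N$ is finite, and invoke Lazarsfeld's theorem on finite covers of $\mathbb{P}^N$ --- is indeed the strategy of the classical proof, the write-up has real gaps. In part $(i)$ the inequality and the identification $Y\cong\mathbb{P}^y$, $Z\cong\mathbb{P}^z$ in the equality case are fine, but the final step, that $X\cong\mathbb{P}^y\times\mathbb{P}^z$, is only asserted: the parenthetical ``any non-trivial twist would obstruct the existence of the second $\mathbb{P}$-bundle structure'' is precisely the thing that has to be proved. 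The standard route is to show that $\psi$ restricted to each fibre $F\cong\mathbb{P}^z$ of $\phi$ is not merely finite surjective onto $Z\cong\mathbb{P}^z$ but of degree one (e.g.\ by computing $\psi^*\mathcal{O}(1)|_F=\mathcal{O}(d)$ and forcing $d=1$), whence $\phi\times\psi$ is bijective and hence an isomorphism; none of this appears.

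Part $(ii)$ is where the proposal genuinely fails to be a proof. First, the claim that the divisors $D_{y_0}=\psi(F_{y_0})$ ``lie in a fixed linear system $|L|$'' only gives a constant class in $\mathrm{NS}(Z)$; if $\mathrm{Pic}^0(Z)\neq 0$ the classifying map lands in a $\mathrm{Pic}^0$-torsor rather than a single linear system, and this must be addressed (or $q(Z)=0$ must be established first). More seriously, the entire dichotomy --- $\dim g(Y)=1$ giving the fibre product over a curve, $\dim g(Y)\geq 2$ forcing $Y\cong Z\cong\mathbb{P}^m$ and $X\cong\mathbb{P}(T_{\mathbb{P}^m})$ --- is stated as a goal rather than argued: no construction of the map $Z\to C$ is given, no proof that $X\to Y\times_C Z$ is an isomorphism, and you explicitly concede that you cannot rule out ``hybrid configurations'' with $g(Y)$ of intermediate behaviour. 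That concession is the heart of the theorem; without it the case analysis is not established. As it stands the proposal is a plausible outline of Sato's argument with the two hardest steps (degree-one in the equality case, and the full classification in the $n+1$ case) missing. Given that the paper simply cites the result, the efficient fix is to do likewise; a self-contained proof would require substantially more work than what is written here.
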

\begin{proof}
    Follows from \cite[Theorem 2]{occhetta2002euler} and the remark before that.
\end{proof}

We will only use Corollary \ref{Pomega} of the following Lemma, but we include this much stronger Lemma as it is interesting on its own. 
     
\begin{lemma}\label{auto of Pbundle}
       Let $X$ be a smooth projective variety of dimension $n\, \geq\, 2$, $W$ a $ \mathbb{P}^r$-bundle over $X$, where $ r\, \geq\, n\,-\,1$ is an integer. Then $W$ has an automorphism $\phi$  which is not over an automorphism of $X$ if and only if either
       
       $1)$ $ X\,=\, \mathbb{P}^n$, $ W\,=\, \mathbb{P}^n \,\times\, \mathbb{P}^n$, or

       $2)$ $ X\,=\, \mathbb{P}^n$, $ W\,=\, \mathbb{P}_{\mathbb{P}^n}(T_{\mathbb{P}^n})$, or

       $3)$ There is a smooth curve $C$, a rank $n$ vector bundle $ \mathcal{E}$ over $ C$, and $ \psi\, \in\, \textnormal{Aut} \, \, C$ with $ (\psi^2)^{\ast} \mathcal{E}\, \cong\, \mathcal{E}$ up to line bundle twist such that  $ W\,=\, \mathbb{P}_{C}(\mathcal{E})\, \times_{C}\, \mathbb{P}_{C}(\psi^{\ast}\mathcal{E})$, $ X\,= \,\mathbb{P}_{C}(\mathcal{E}),$ and $ W \,\longrightarrow\, X$ is the projection onto first factor.
        \end{lemma}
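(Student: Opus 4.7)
The plan is to translate the existence of $\phi$ into the existence of two distinct $\mathbb{P}^r$-bundle structures on $W$ with the same base $X$, and then invoke Lemma \ref{Pbundle}. Given any automorphism $\phi$ of $W$, the two projections $\pi\,:\,W\to X$ and $\pi\circ\phi\,:\,W\to X$ both realize $W$ as a $\mathbb{P}^r$-bundle with base $X$; by the convention from \S 2, these are the same $\mathbb{P}$-bundle structure precisely when there is an automorphism of $X$ making the square commute, i.e., precisely when $\phi$ is over an automorphism of $X$. So the hypothesis on $\phi$ is equivalent to saying that $W$ carries two \emph{distinct} $\mathbb{P}^r$-bundle structures, both with $n$-dimensional base.

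Applying Lemma \ref{Pbundle}(i) with $\dim Y=\dim Z=n$ and $\dim W=n+r$ yields $r\leq n$; combined with the standing hypothesis $r\geq n-1$, this forces $r=n$ or $r=n-1$. The case $r=n$ is the equality case of Lemma \ref{Pbundle}(i), so $X=\mathbb{P}^n$ and $W=\mathbb{P}^n\times\mathbb{P}^n$, which is case $(1)$. When $r=n-1$, Lemma \ref{Pbundle}(ii) applies: either $W=\mathbb{P}(T_{\mathbb{P}^n})$ with $X=\mathbb{P}^n$ (case $(2)$), or there is a smooth curve $C$ such that $W=X\times_C X'$, where $X$ and $X'\cong X$ each carry a $\mathbb{P}^{n-1}$-bundle structure over $C$. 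Writing $X=\mathbb{P}_C(\mathcal{E})$, and invoking the uniqueness of the $\mathbb{P}^{n-1}$-bundle structure on $X$ over a curve $C$ up to composition with an element of $\text{Aut}(C)$ (which follows from a second application of Lemma \ref{Pbundle}(i) to $X$ with two such bundle structures, handling the $n=2$ case separately), the second structure must be of the form $\mathbb{P}_C(\psi^\ast\mathcal{E})$ for some $\psi\in\text{Aut}(C)$, up to line bundle twist. The relation $(\psi^2)^\ast\mathcal{E}\cong\mathcal{E}$ up to twist will then be extracted by tracking how $\phi$ (equivalently, the pair $\phi,\phi^2$) identifies the $\mathbb{P}^{n-1}\times\mathbb{P}^{n-1}$ fibers of $W\to C$ over $c$ and $\psi^2(c)$.

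For the converse direction, one exhibits the automorphism explicitly in each case. In $(1)$, the factor-swap on $\mathbb{P}^n\times\mathbb{P}^n$ evidently does not descend to $X=\mathbb{P}^n$. In $(2)$, one identifies $\mathbb{P}(T_{\mathbb{P}^n})$ with the flag variety $F(1,2;n+1)$, or equivalently with the point–hyperplane incidence variety $\{(p,H)\in\mathbb{P}^n\times(\mathbb{P}^n)^\ast\,:\, p\in H\}$; fixing any identification $\mathbb{P}^n\cong(\mathbb{P}^n)^\ast$, the projective-duality swap $(p,H)\leftrightarrow (H,p)$ yields an automorphism of $W$ that interchanges its two $\mathbb{P}^{n-1}$-bundle structures and thus cannot be over $X$. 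In $(3)$, the natural isomorphism $\tilde\psi\,:\,\mathbb{P}_C(\psi^\ast\mathcal{E})\to\mathbb{P}_C(\mathcal{E})=X$ covering $\psi$, together with the identification $\alpha$ furnished by $(\psi^2)^\ast\mathcal{E}\cong\mathcal{E}$ (up to twist), allows one to define an automorphism of $W$ of the form $(a,b)\mapsto(\tilde\psi(b),\alpha(a))$ that visibly exchanges the two bundle structures on $W$, and so is not over $X$.

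The main obstacle will be the extraction of the twist condition $(\psi^2)^\ast\mathcal{E}\cong\mathcal{E}$ in the forward direction of case $(3)$: this requires a careful bookkeeping of how $\phi$ identifies fibers of $W\to C$ over $c$ and $\psi^2(c)$, and how these pointwise identifications glue into a bundle isomorphism over $C$. Once this is in place, and the low-dimensional case $n=2$ (where $X$ may carry several distinct $\mathbb{P}^1$-bundle structures over a curve, e.g.\ on $\mathbb{F}_0=\mathbb{P}^1\times\mathbb{P}^1$) is treated separately, the remaining verifications in both directions are essentially formal unwindings of the construction.
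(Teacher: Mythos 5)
Your overall strategy coincides with the paper's: an automorphism of $W$ not over an automorphism of $X$ is the same datum as a second $\mathbb{P}^r$-bundle structure $p\circ\phi$ on $W$ with base $X$; Lemma \ref{Pbundle} then yields the trichotomy $r=n$ (case $(1)$), $W=\mathbb{P}(T_{\mathbb{P}^n})$ (case $(2)$), or $W=X\times_C X$ for two $\mathbb{P}^{n-1}$-bundle structures $f,g$ on $X$ over a curve $C$; and a second application of Lemma \ref{Pbundle}$(i)$ forces $g=\psi\circ f$ for some $\psi\in\textnormal{Aut}(C)$, except when $n=2$ and $X=\mathbb{P}^1\times\mathbb{P}^1$ (which you defer but which is immediate: then $W=(\mathbb{P}^1)^3$ lands in case $(3)$ with $\mathcal{E}$ trivial and $\psi=id$). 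Your explicit automorphisms in the converse direction --- the factor swap, the duality swap on the incidence variety, and $(a,b)\mapsto(\tilde\psi(b),\alpha(a))$ --- are correct, and the last is a somewhat more concrete packaging of the paper's chain of four isomorphisms.

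The one genuine gap is precisely the step you flag as ``the main obstacle'': extracting $(\psi^2)^{\ast}\mathcal{E}\cong\mathcal{E}$ up to line bundle twist. Your proposed route --- identify fibers of $W\to C$ over $c$ and $\psi^2(c)$ pointwise and then glue these identifications into a bundle isomorphism --- is not carried out, and gluing fiberwise projective-linear identifications into a global isomorphism of projectivized bundles is exactly the nontrivial content, so as written case $(3)$ of the forward direction is incomplete. The paper closes this globally rather than fiberwise: the Cartesian square exhibits $W$, via $p$, as $\mathbb{P}_{X}(f^{\ast}(\psi^{-1})^{\ast}\mathcal{E})$ and, via $p\circ\phi$, as $\mathbb{P}_{X}(g^{\ast}\mathcal{E})=\mathbb{P}_{X}(f^{\ast}\psi^{\ast}\mathcal{E})$; composing with $\phi$ gives an isomorphism of these two projectivizations over $X$, hence $f^{\ast}(\psi^{-1})^{\ast}\mathcal{E}\cong f^{\ast}\psi^{\ast}\mathcal{E}\otimes L$ for some line bundle $L$ on $X$; applying $f_{\ast}$ and the projection formula shows $f_{\ast}L$ is a line bundle on $C$ and $(\psi^{-1})^{\ast}\mathcal{E}\cong\psi^{\ast}\mathcal{E}\otimes f_{\ast}L$, which is the desired twist condition. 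Some argument of this global nature (or an equivalent cocycle computation) is needed to complete your proof.
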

\begin{proof}
$(\Longleftarrow)$ If $(1)$ or $(2)$ holds, the statement is clear. 

If $(3)$ holds, note the following sequence of isomorphisms:
 \begin{equation*}
\p(\mathcal{E})\,\times_{C}\,\p(\psi^{\ast}\mathcal{E})\,\cong\, \p((\psi^{2})^{\ast}\mathcal{E})\,\times_{C}\, \p(\psi^{\ast}\mathcal{E})\,\cong\, \psi^{\ast}(\p(\psi^{\ast}\mathcal{E})\,\times_{C}\,\p(\mathcal{E}))
\end{equation*}
\begin{equation*}
\hspace{2.4cm} \cong\psi^{\ast}(\p(\mathcal{E})\times_{C}\p(\psi^{\ast}\mathcal{E}))\cong \p(\mathcal{E})\times_{C} \p(\psi^{\ast}\mathcal{E}),
\end{equation*}     
whose composition gives an automorphism of $W$ that is not induced by an automorphism of $X$.

$ (\Longrightarrow) $ Let $ W\, \xlongrightarrow{p}\, X$ be the projection. Since $ \phi$ is not over an automorphism of $X$, the variety $W$ admits two different $ \mathbb{P}^r$-bundle structure over $X$\,:\, $p$ and $ p\, \circ\, \phi $.

If $ r\,\geq\, n$, then the conclusion of $(1)$ holds by Lemma \ref{Pbundle} $(i)$. So assume $r\,=\, n\,-\,1$.  By Lemma \ref{Pbundle} $(ii)$, two cases can occur:

     $ \underline{\textrm{Case} \,1}:$  $ X\,=\, \mathbb{P}^n$,  $ W\,=\, \mathbb{P}(T_{\mathbb{P}^n})$. 

     In this case, the conclusion $(2)$ holds.

     $ \underline{\textrm{Case} \,\, 2}:$ There exists a smooth curve $C$ morphisms 
     $$ X\, \xlongrightarrow{f}\, C, \quad X \,\xlongrightarrow{g}\,C$$ 
     
     which defines $ \mathbb{P}^{n-1}$-bundle structures such that the diagram 

\begin{equation}\label{pbundle diagram1}
\begin{tikzcd}
W \arrow[r, "p \circ \phi"] \arrow[d,"p"]
& X \arrow[d, "g" ] \\
X  \arrow[r,"f" ]
& |[, rotate=0]| C
\end{tikzcd}
\end{equation}

is commutative and Cartesian. 

If $f$ and $g$ are different $ \mathbb{P}^{n-1}$-bundle structures on $X$, then by Lemma \ref{Pbundle} $(i)$, we get $ X\, =\, \mathbb{P}^1 \,\times\, \mathbb{P}^1$, with $f$ and $g$ being the two projections. In this case,

$$ W\,=\,(\mathbb{P}^{1})^{3}\,=\, \mathbb{P}_{\mathbb{P}^1}(\mathcal{O}_{\mathbb{P}^1}^{2})\,\times_{\mathbb{P}^1}\, \mathbb{P}_{\mathbb{P}^1}(\mathcal{O}_{\mathbb{P}^1}^{2}),$$ 
so the conclusion of $(3)$ holds. 

Now, suppose $f$, $g$ are the same $ \mathbb{P}^{n-1}$-bundle structure. Then there exists $ \psi \in \textnormal{Aut}(C)$ and a rank $n$ vector bundle $ \mathcal{E}$ on $C$ such that $ X\, \xlongrightarrow{f}\,C$ is the projection $ \mathbb{P}_{C}(\mathcal{E})\, \longrightarrow\, C$, and $ g\,=\, \psi\, \circ\, f.$ 

Thus, the morphism $ X\, \xlongrightarrow{g}\, C$ can  be identified with the projection 
$$\mathbb{P}_{C}((\psi^{-1})^{\ast}\mathcal{E})\,\longrightarrow\, C.$$ 

Now diagram \ref{pbundle diagram1} shows that we have the following commutative diagram 

\[\begin{tikzcd}
	{\mathbb{P}_{X}(g^\star\mathcal{E})} & W & W & \mathbb{P}_{X}(f^{\ast}(\psi^{-1})^{\ast}\mathcal{E}) \\
	& X
	\arrow[from=1-1, to=2-2]
	\arrow[phantom,"\cong", no head, from=1-1, to=1-2]
	\arrow[phantom,"\overset{\phi}{\cong}", no head, from=1-2, to=1-3]
	\arrow[phantom,"\cong", no head, from=1-3, to=1-4]
	\arrow[ "p \circ \phi",from=1-2, to=2-2]
	\arrow["p", from=1-3, to=2-2]
	\arrow[from=1-4, to=2-2]
\end{tikzcd},\]

where the leftmost and rightmost maps are natural projections. Hence, 
$$ \mathbb{P}_{X}(f^{\ast}(\psi^{-1})^{\ast}\mathcal{E})\, \cong\, \mathbb{P}_{X}(g^{\ast}\mathcal{E})\, \cong\, \mathbb{P}_{X}(f^{\ast} \psi^{\ast}\mathcal{E})$$ 
over $X$. It follows that there exists a line bundle $L$ on $X$ such that 
$$ f^{\ast}(\psi^{-1})^{\ast}\mathcal{E}\, \cong\, f^{\ast} \psi^{\ast}\mathcal{E} \,\otimes\, L.$$ 
Using the same argument as in \cite[ Lemma 3.1]{bansal2023isomorphism}, it shows that 
$$ \psi^{\ast}\mathcal{E}\, \cong\, (\psi^{-1})^{\ast} \mathcal{E}\,\otimes\, M$$ 
for some line bundle $M$ on $C$. More precisely, applying $f_*$ and the projection formula, we obtain  $$(\psi^{-1})^{\ast}\mathcal{E} \,\cong\, \psi^{\ast}\mathcal{E} \,\otimes\, f_{\ast}L.$$ Since $\psi^{\ast}\mathcal{E}$ and $(\psi^{-1})^{\ast}\mathcal{E}$ are locally trivial and of the same rank, it follows that $f_{\ast} L$ is a line bundle.  Therefore, $ \mathcal{E}\,\cong\, (\psi^{2})^{\ast}\mathcal{E}$ up to a line bundle twist, and hence the conclusion of $(3)$ holds.
\end{proof}
\begin{corollary}\label{Pomega}
Let $X$ be a smooth projective variety of dimension $\geq\, 2$. Then any automorphism of $\mathbb{P}_{X}(\Omega_{X})$ is over an automorphism of $X$, unless $X\, \cong\, \mathbb{P}^{2}$.
\end{corollary}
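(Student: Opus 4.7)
My plan is to apply Lemma \ref{auto of Pbundle} with $W\,=\,\mathbb{P}_X(\Omega_X)$, which is a $\mathbb{P}^{n-1}$-bundle over $X$, so $r\,=\,n-1$ meets the hypothesis $r\,\geq\, n-1$. Suppose $W$ admits an automorphism not over an automorphism of $X$; then one of the three cases of the lemma must hold, and I will rule each out except when $X\,\cong\,\mathbb{P}^2$.

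Case $(1)$ is immediately incompatible with rank, since there $W\,=\,\mathbb{P}^n\,\times\,\mathbb{P}^n$ is a $\mathbb{P}^n$-bundle over $X$, not a $\mathbb{P}^{n-1}$-bundle. In case $(2)$, one has $X\,=\,\mathbb{P}^n$ and $\mathbb{P}_X(\Omega_X)\,\cong\,\mathbb{P}_{\mathbb{P}^n}(T_{\mathbb{P}^n})$ as $\mathbb{P}$-bundles over $X$, so $\Omega_{\mathbb{P}^n}\,\cong\, T_{\mathbb{P}^n}\,\otimes\, L$ for some line bundle $L$ on $\mathbb{P}^n$; comparing determinants yields $L^{\otimes n}\,\cong\,\mathcal{O}(-2n-2)$, which has a line bundle solution on $\mathbb{P}^n$ only when $n\,\mid\, 2(n+1)$, forcing $n\,=\,2$. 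This is the excluded case (where indeed $\Omega_{\mathbb{P}^2}\,\cong\, T_{\mathbb{P}^2}(-3)$).

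The main obstacle is ruling out case $(3)$. Here $X\,=\,\mathbb{P}_C(\mathcal{E})$ for a smooth curve $C$, $f\,:\, X\,\to\, C$ is the bundle projection, $\mathcal{E}$ has rank $n$, and the second projection exhibits $W$ as $\mathbb{P}_X(f^*\psi^*\mathcal{E})$. Matching this with $\mathbb{P}_X(\Omega_X)$ over $X$ forces $\Omega_X\,\cong\, f^*\psi^*\mathcal{E}\,\otimes\, L$ for some line bundle $L$ on $X$. I would derive a contradiction by restricting to a fiber $F\,\cong\,\mathbb{P}^{n-1}$ of $f$: the right side restricts to $L|_F^{\oplus n}$ (since $f^*\psi^*\mathcal{E}$ is trivial on fibers), while for the left I would use the relative cotangent sequence $0\,\to\, f^*\Omega_C\,\to\, \Omega_X\,\to\,\Omega_{X/C}\,\to\, 0$, whose restriction to $F$ reads $0\,\to\,\mathcal{O}_F\,\to\, \Omega_X|_F\,\to\, \Omega_F\,\to\, 0$. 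This sequence splits since $\mathrm{Ext}^1_F(\Omega_F,\mathcal{O}_F)\,=\,H^1(\mathbb{P}^{n-1},T_{\mathbb{P}^{n-1}})\,=\,0$, giving $\Omega_X|_F\,\cong\, \Omega_F\,\oplus\,\mathcal{O}_F$. This is not isomorphic to $L|_F^{\oplus n}$: for $n\,\geq\, 3$ because $\Omega_{\mathbb{P}^{n-1}}$ is indecomposable, so the left side has only two indecomposable summands while the right has $n\,\geq\, 3$; for $n\,=\,2$ because $\mathcal{O}_{\mathbb{P}^1}\,\oplus\,\mathcal{O}_{\mathbb{P}^1}(-2)$ has two nonisomorphic summands, whereas $L|_F^{\oplus 2}$ has two identical ones. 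Hence case $(3)$ cannot occur, and the only remaining possibility is $X\,\cong\,\mathbb{P}^2$.
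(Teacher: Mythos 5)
Your proof is correct and follows essentially the same route as the paper: apply Lemma \ref{auto of Pbundle}, rule out cases $(1)$ and $(2)$ for $X\not\cong\mathbb{P}^2$, and in case $(3)$ restrict $\Omega_X\cong f^{*}\psi^{*}\mathcal{E}\otimes L$ to a fibre $\mathbb{P}^{n-1}$ of $f$ to get the impossible isomorphism $\Omega_{\mathbb{P}^{n-1}}\oplus\mathcal{O}\cong \mathcal{O}(r)^{\oplus n}$. The only (harmless) differences are in the final contradictions: you use a dimension count for $(1)$, an explicit determinant computation for $(2)$, and Krull--Schmidt for $(3)$, where the paper invokes Chern classes and an $h^{0}$ count.
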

\begin{proof}
Suppose $X \,\not\cong\, \mathbb{P}^{2}$. Let $W \,=\, \mathbb{P}_{X}(\Omega_{X})$, and let
$$W\,\xrightarrow{\,\,\, p\,\,\,}\, X$$ be the natural projection. Suppose $W$ admits an automorphism that is not over an automorphism of $X$. Then, one of the cases $(1)$, $(2)$ or $(3)$ in Lemma $\ref{auto of Pbundle}$ must occur.

An examination of Chern classes shows that $\Omega_{\mathbb{P}^{n}}$ is not isomorphic to $\mathcal{O}_{\mathbb{P}^{n}}^{n}$ or to $T_{\mathbb{P}^{n}}$, up to line bundle twist, if $n\,>\,2$. Therefore, cases $(1)$ or $(2)$ of Lemma \ref{auto of Pbundle} cannot occur. Thus, case $(3)$ of Lemma \ref{auto of Pbundle} must occur. Hence $X \,=\, \p(\mathcal{E}_{1})$ for some rank $n$ vector bundle $\mathcal{E}_{1}$ over a curve $C$, and $\Omega_{X} \,\cong\, f^{\ast}\mathcal{E}_{2}$ up to line bundle twist, where $\mathcal{E}_{2}$ is another rank $n$ vector bundle on $C$, and $f\,:\,\p(\mathcal{E}_{1})\,\longrightarrow\, C$ is the projection. Now, restrict this isomorphism to a fibre of $f$, which is an isomorphism to $\mathbb{P}^{n-1}$. We obtain
 \begin{equation*}
\Omega_{\mathbb{P}^{n-1}}\,\oplus\,\mathcal{O}_{\mathbb{P}^{n-1}}\,\cong\, \mathcal{O}_{\mathbb{P}^{n-1}}(r)^{\oplus n}
\textrm{for some}\,\, r\,\in\, \mathbb{Z}.
\end{equation*}
Taking $h^{0}$, we get:
\begin{equation*}
1\, =\, h^{0}(\mathbb{P}^{n-1}, \Omega_{\mathbb{P}^{n-1}}\,\oplus\, \mathcal{O}_{\mathbb{P}^{n-1}}) \, =\, h^{0}(\mathbb{P}^{n-1}, \mathcal{O}_{\mathbb{P}^{n-1}}(r)^{\oplus n})   
\end{equation*}
\begin{equation*}
\hspace{5.6cm} =\, n \,h^{0}(\mathbb{P}^{n-1}, \mathcal{O}_{\mathbb{P}^{n-1}}(r)),  
\end{equation*}
which contradicts the assumptions $n\,\geq\, 2$.
\end{proof}
\begin{remark}
The same argument as in Corollary \ref{Pomega} shows that if $X$ is a smooth projective variety of dimension $\geq\, 2$ that is not a projective space, then any automorphism of $\mathbb{P}_{X}(T_{X})$ is over an automorphism of $X$.
\end{remark}

\begin{lemma}\label{Hilbert drum}
Let $n\,\geq\,2$ be an integer. Let $H_{3,n}$ denote the reduced fibre of the Hilbert-Chow morphism $\emph{Hilb}^{3}\,\mathbb{A}^{n}\,\longrightarrow\, S^{3}\mathbb{A}^{n}$ over $3\cdot[\underline{0}]$.  Then $H_{3,n}$ is irreducible and rational of dimension $2(n\,-\,1)$, and its normalization has Picard rank $1$.
\end{lemma}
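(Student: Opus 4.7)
The plan is to stratify $H_{3,n}$ by the Hilbert function of the Artinian local ring, identify the dense stratum as a vector bundle over $\mathbb{P}^{n-1}$, and then use a codimension-$2$ divisor class argument on the normalization.

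First, a length-$3$ local Artinian quotient of $\mathbb{C}[x_1,\ldots,x_n]$ concentrated at the origin has Hilbert function either $(1,1,1)$ (curvilinear type) or $(1,2,0)$ (fat type). The fat locus $H_{3,n}^{\mathrm{fat}}$ parametrizes ideals of the form $\mathfrak{m}^2+V$ for a codimension-$2$ subspace $V\subset \mathfrak{m}/\mathfrak{m}^2$, so it is isomorphic to $\mathrm{Gr}(n-2,n)$ and has dimension $2(n-2)$. A curvilinear length-$3$ subscheme at $\underline{0}$ is cut out by the $2$-jet of a curve $t\mapsto at+bt^2$ with $a\in\mathbb{C}^n\setminus\{0\}$ and $b\in\mathbb{C}^n$, and two such jets define the same subscheme iff they differ by a reparametrization $t\mapsto c_1t+c_2t^2$ with $c_1\in\mathbb{C}^\times$. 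The resulting action $(a,b)\mapsto(c_1a,\,c_1^2b+c_2a)$ has the tangent direction $[a]\in\mathbb{P}^{n-1}$ as its invariant and exhibits $H_{3,n}^{\mathrm{curv}}$ as the total space of a rank $(n-1)$ vector bundle over $\mathbb{P}^{n-1}$ with fibre $\mathbb{C}^n/\mathbb{C} a$ over $[a]$. In particular, $H_{3,n}^{\mathrm{curv}}$ is smooth, irreducible and rational of dimension $2(n-1)$. A direct flat-limit computation (letting $b\to\infty$ along a fixed direction $c$) shows that every fat subscheme arises as the limit of a curvilinear family, so the fat locus lies in the closure of the curvilinear one, and consequently $H_{3,n}$ is irreducible and rational of dimension $2(n-1)$.

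For the Picard rank of the normalization $\widetilde{H}_{3,n}$, the normalization map is finite, so the preimage of $H_{3,n}^{\mathrm{fat}}$ still has dimension $2(n-2)$, hence codimension $2$ in $\widetilde{H}_{3,n}$, while the preimage of $H_{3,n}^{\mathrm{curv}}$ is isomorphic to the already-smooth $H_{3,n}^{\mathrm{curv}}$. By the standard invariance of the Weil divisor class group of a normal variety under removal of a codimension-$\geq 2$ closed subset,
\[
\mathrm{Cl}(\widetilde{H}_{3,n})\;\xrightarrow{\ \sim\ }\;\mathrm{Cl}(H_{3,n}^{\mathrm{curv}})\;=\;\mathrm{Pic}(H_{3,n}^{\mathrm{curv}})\;=\;\mathrm{Pic}(\mathbb{P}^{n-1})\;=\;\mathbb{Z},
\]
using that the Picard group of a vector bundle agrees with that of the base. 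Since $\widetilde{H}_{3,n}$ is normal, $\mathrm{Pic}(\widetilde{H}_{3,n})\hookrightarrow\mathrm{Cl}(\widetilde{H}_{3,n})=\mathbb{Z}$; and since $\widetilde{H}_{3,n}$ is projective (being the normalization of the projective fibre $H_{3,n}$) this subgroup contains an ample class and is therefore all of $\mathbb{Z}$, giving Picard rank $1$.

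The main delicate point will be verifying that the curvilinear stratum genuinely carries a global vector bundle structure with the stated fibre (rather than merely a set-theoretic fibration), since the $c_1^2$ scaling in the reparametrization introduces a nontrivial line bundle twist on the tautological quotient bundle; once this and the codimension of the fat locus are in hand, the Picard rank computation is formal.
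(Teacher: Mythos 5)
Your proof is correct, and it follows the same overall route as the paper: decompose $H_{3,n}$ into the curvilinear stratum (Hilbert function $(1,1,1)$) and the fat stratum (Hilbert function $(1,2)$), identify the former as a rank-$(n-1)$ vector bundle over $\mathbb{P}^{n-1}$ and the latter as a Grassmannian of codimension $2$, and then run the class-group argument on the normalization. The difference is that the paper outsources the two nontrivial inputs to the literature --- it cites a lemma of Jelisiejew--Teitler for the vector bundle structure on the curvilinear locus and a result of Jelisiejew--Kollár for irreducibility of $H_{3,n}$ --- whereas you prove both directly: the jet-space description $(a,b)\mapsto(c_1a,\,c_1^2b+c_2a)$ recovers the bundle structure (and your caveat about the $c_1^2$-twist is the right one to flag; note that for the Picard computation even a Zariski-locally trivial affine-space bundle over $\mathbb{P}^{n-1}$ would suffice), and the degeneration $u x_2 - x_1^2,\ x_1x_2,\ x_2^2,\ x_3,\dots,x_n$ as $u\to 0$ exhibits every fat point as a flat limit of curvilinear schemes, giving irreducibility without citation. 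You are also more explicit than the paper about the final step $\mathrm{Pic}(\widetilde H_{3,n})\hookrightarrow \mathrm{Cl}(\widetilde H_{3,n})\cong\mathrm{Cl}(H^{\mathrm{curv}})\cong\mathbb{Z}$, which the paper leaves as ``it follows.'' One cosmetic remark: the paper only claims a \emph{bijective morphism} from the Grassmannian onto the fat locus rather than an isomorphism; since only the dimension $2(n-2)$ enters the codimension count, this does not affect your argument.
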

\begin{proof}
Let $H^{\circ}\,\subset\, H_{3,n}$ be the open subset consisting of subschemes $Z$ supported on  a curve. By \cite[Lemma 3.2]{JT}, $H^{\circ}$ is the total space of a vector bundle of rank $n\,-\,1$ over $\mathbb{P}^{n-1}$. Thus, $H^{\circ}$ is rational and has dimension $2(n\,-\,1)$. From \cite{JK}, it follows that $H_{3,n}$ is irreducible. The complement $H_{3,n}\,\setminus\, H^{\circ}$ consists of subschemes with Hilbert function $(1,2)$. There is a bijective morphism
$$\textrm{Gr}(2,n)\,\longrightarrow\, H_{3,n} \,\setminus\, H^{\circ},$$
where $\textrm{Gr}(2,n)$ denotes the Grassmanian variety of $2$-dimensional subspaces of $\mathbb{C}^{n}$. Therefore, in the normalization $\widetilde{H_{3,n}}$, the complement of $H^{\circ}$ has codimension $\,\geq\, 2$. It follows that $\widetilde{H_{3,n}}$ has Picard rank $1$.  
\end{proof}
\begin{remark}\label{drum}
In fact, some more computation shows the following structure of $\widetilde{H_{3,n}}$: There is a birational map
\begin{equation*}
\mathbb{P}_{\mathbb{P}^{n-1}}(\mathcal{O}(1)\,\oplus\,\Omega)\,\longrightarrow\, \widetilde{H_{3,n}},   \end{equation*}
contracting the divisor $\mathbb{P}_{\mathbb{P}^{n-1}}(\Omega)$ to a copy of $\textrm{Gr}(2,n)$. In other words, $\widetilde{{H}_{3,n}}$ is the drum constructed upon the triple $(\mathbb{P}_{\mathbb{P}^{n-1}}(\Omega), \mathcal{O}_{\mathbb{P}^{n-1}}(3), \mathcal{O}_{{\textrm{Gr}}(2,n)}(1))$, in the language of \cite{ORSW} or \cite{bansal2024extremal}
\end{remark}
\begin{lemma}\label{rho}
    Under the assumptions of Theorem \ref{2}, we have $\rho(\textrm{Hilb}^{m}X\,\setminus\, S^{m}(X))\,=\,1.$
\end{lemma}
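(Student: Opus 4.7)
The plan is to interpret the claim as $\rho(\textrm{Hilb}^m X / S^m X) = 1$ for the Hilbert-Chow contraction $p\colon \textrm{Hilb}^m X \to S^m X$, namely that $p$ has exactly one prime exceptional divisor. Under the hypotheses of Theorem \ref{2} (either $n = 2$ or $m \leq 3$), $\textrm{Hilb}^m X$ is smooth and irreducible, while $S^m X$ is a finite quotient of the smooth $X^m$, hence normal and $\mathbb{Q}$-factorial. For such a birational contraction between $\mathbb{Q}$-factorial varieties the relative Picard rank counts prime exceptional divisors, so I reduce to showing that the exceptional locus of $p$ has exactly one irreducible component of codimension $1$.

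First I would stratify $S^m X$ by partition type: for a partition $\pi = (a_1, \ldots, a_k)$ of $m$, the locally closed stratum $W^\circ_\pi$ has dimension $kn$, and its preimage $E_\pi$ is the closure of a fibration over $W^\circ_\pi$. I then compute the dimension of each $E_\pi$ and check which are of codimension $1$ in $\textrm{Hilb}^m X$ (which has dimension $mn$). When $n = 2$, the Hilbert-Chow fibers over $W^\circ_\pi$ are products of punctual Hilbert schemes of total dimension $\sum_i (a_i - 1) = m - k$, so $\dim E_\pi = m + k$; requiring $\dim E_\pi = 2m - 1$ forces $k = m - 1$, so only $\pi = (2, 1^{m-2})$ contributes, and one recovers the single divisor $E$. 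When $n \geq 3$ and $m = 2$, the only non-trivial stratum is the small diagonal $W_{(2)}$, and its preimage $E$ is a $\mathbb{P}^{n-1}$-bundle over $X$ of dimension $2n - 1$, again the unique exceptional divisor.

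The main technical point is the case $n \geq 3$, $m = 3$: here $S^3 X$ has three strata, and I must check that only the preimage of the partial diagonal $W_{(2,1)} = \{\,2x + y : x \neq y\,\}$ contributes a divisor, while the preimage of the small diagonal $W_{(3)} = \{\,3x\,\}$ does not. For the partial diagonal the fibers are $\mathbb{P}^{n-1}$, giving dimension $2n + (n - 1) = 3n - 1$, one less than the ambient dimension $3n$. For the small diagonal I invoke Lemma \ref{Hilbert drum}, which identifies the fiber over $3 \cdot [\underline{0}]$ with $H_{3,n}$ of dimension $2(n-1)$; the preimage of $W_{(3)}$ therefore has dimension $n + 2(n-1) = 3n - 2$, i.e.\ codimension $2$ in $\textrm{Hilb}^3 X$, so it contributes no divisor. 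Hence $E = E_{(2,\,1^{m-2})}$ is the unique prime exceptional divisor in all cases, and the relative Picard rank equals $1$.
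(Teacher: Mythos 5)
Your proof is correct. It starts from the same reduction as the paper — since $S^{m}X$ is $\mathbb{Q}$-factorial, the relative Picard rank of the Hilbert--Chow contraction equals the number of prime divisors in its exceptional locus — but the two arguments then diverge in how they identify that divisorial part. The paper proves the stronger statement that the entire exceptional locus $E$ is irreducible: for $n=2$ it quotes the classical result, and for $m=3$, $n\geq 3$ it connects each punctual point of $E\setminus\overline{E^{\circ}}$ to $E^{\circ}$ by embedding the corresponding length-$3$ subscheme into a smooth surface inside $X$. You instead control only the codimension-one part by dimension counts on the partition strata: Brian\c{c}on's bound gives $\dim E^{\circ}_{\pi}=m+|\pi|$ for $n=2$, so only $\pi=(2,\underline{1})$ is divisorial; and Lemma \ref{Hilbert drum} shows that for $m=3$, $n\geq 3$ the preimage of $W_{(3)}$ has codimension $2$ and hence carries no divisor. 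This is weaker than irreducibility of $E$ but is all the lemma needs, it is uniform across the cases, and it reuses Lemma \ref{Hilbert drum}, which the paper proves anyway for Theorem \ref{descent}; what your route does not recover is the irreducibility of $E$ itself, which the paper's argument establishes as a byproduct.
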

\begin{proof}
Let $E$ be the exceptional divisor of  the Hilbert-Chow morphism 
$$\textnormal{Hilb}^{m}(X)\,\xrightarrow{\,\,\,p\,\,\,}\, S^{m} X.$$ 
To prove the claim, it suffices to show that $E$ is irreducible. For $n\,=\,2$, this is a classical result. For $m\,=\,2$, it follows by observing that $E\,=\,E^{\circ}$ is a $\mathbb{P}$-bundle over $W\,=\,W^{\circ}$, and hence irreducible. Therefore, we focus on the case $m\,=\,3$ and $n\,\geq\, 3$.

As mentioned above, $E^{\circ}$ is a smooth open subvariety of $E$, so its closure $\overline{E^{\circ}}$ is the unique irreducible component of $E$ intersecting $E^{\circ}$. To conclude that $\overline{E^{\circ}}\,=\, E$, it suffices to show that for any point $[Z]\,\in\,E\,\setminus\, \overline{E^{\circ}}$ lies in an irreducible subset of $E$ that contains $[Z]$ and intersects $E^{\circ}$. 

Since $[Z]\,\in\, E\,\setminus\,\overline{E^{\circ}}$, $Z$ is supported on a single point $x\,\in\, X$. As $Z$ has length $3$, there exists a smooth, locally closed subvariety $S\,\subset\,X$ of dimension $2$ such that $Z$ is a subscheme of $S$. This gives a locally closed embedding 
\begin{equation*}
\textrm{Hilb}^{3}S\,\xlongrightarrow{j}\,\textrm{Hilb}^{3}(X).
\end{equation*}
Let $F\,\hookrightarrow\, \textrm{Hilb}^{3}(S)$ be the exceptional divisor for the Hilbert-Chow morphism
\begin{equation*}
\textrm{Hilb}^{3}(S)\,\longrightarrow\, S^{3}(S).
\end{equation*}

Then $F$ is irreducible, and $j(F)\,\subset\, E$. Moreover, $[Z]\,\in\,j(F)$ and $j(F)\,\cap\, E^{\circ}\,\neq\,\emptyset$. This completes the proof.
\end{proof}

The following three lemmas are stated under the assumption that $X$ is a smooth surface and $m\,\geq\, 4$, using the notations $E_{\pi}$ and $E_{\pi}^{\circ}$ from \S2.
\begin{lemma}\label{local}
Let $\pi$ be any partition of $m$. Then $E_{\pi}$ is  irreducible of dimension $m\,+\,|\pi|$.    
\end{lemma}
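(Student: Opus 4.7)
The plan is to prove the claim first for the open stratum $E_{\pi}^{\circ}$, where an explicit fiber-bundle description is available, and then pass to its closure using the identity $E_{\pi} = \overline{E_{\pi}^{\circ}}$ already recorded in the notation section. Write $\pi = (a_{1}, \ldots, a_{k})$ with $k = |\pi|$, and let $U \subset X^{k}$ denote the open subset of ordered $k$-tuples with pairwise distinct coordinates.

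First, I would introduce the ordered incidence variety $Z_{\pi} \subset U \times \textrm{Hilb}^{m}(X)$ whose points are tuples $((y_{1}, \ldots, y_{k}), \xi)$ with $\xi = \xi_{1} \sqcup \cdots \sqcup \xi_{k}$, each $\xi_{i}$ supported at $y_{i}$ and of length $a_{i}$. The first projection $Z_{\pi} \to U$ is \'etale-locally a product: since $X$ is a smooth surface, \'etale-locally near each $y_{i}$ it looks like $\mathbb{A}^{2}$, identifying the punctual Hilbert scheme at $y_{i}$ with $\textrm{Hilb}^{a_{i}}_{0}(\mathbb{A}^{2})$. Thus the fiber of $Z_{\pi} \to U$ over $(y_{1}, \ldots, y_{k})$ is $\prod_{i} \textrm{Hilb}^{a_{i}}_{y_{i}}(X)$, which by Brian\c{c}on's theorem is irreducible of dimension $\sum_{i} (a_{i} - 1) = m - k$. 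Since $U$ is smooth and irreducible of dimension $2k$, the \'etale-local triviality forces $Z_{\pi}$ to be irreducible of dimension $m + k$.

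Next, I would push $Z_{\pi}$ forward to $E_{\pi}^{\circ}$ via the second projection $((y_{i}), \xi) \mapsto \xi$. This map is surjective, since every element of $E_{\pi}^{\circ}$ decomposes uniquely by support into pieces of the prescribed lengths, and finite, since the fiber over $\xi$ consists of the orderings of the support compatible with the multiplicity pattern of $\pi$. Consequently, $E_{\pi}^{\circ}$ is irreducible of dimension $m + |\pi|$, and invoking $E_{\pi} = \overline{E_{\pi}^{\circ}}$ gives the same conclusion for $E_{\pi}$.

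I do not anticipate a substantive obstacle. The only nontrivial external input is Brian\c{c}on's theorem on irreducibility and dimension of punctual Hilbert schemes of smooth surfaces; the rest is standard bookkeeping about \'etale-local triviality of the parametrization of thickenings supported at varying smooth points of $X$.
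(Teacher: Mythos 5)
Your argument is correct. The paper itself gives no internal proof here -- it simply cites \cite[Section 3.6]{DMM} -- and what you have written is a self-contained version of the standard argument that reference carries out: stratify by the ordered configuration space $U$ of supports, observe that the ordered incidence variety is an \'etale-locally trivial fibration over $U$ with fibre a product of punctual Hilbert schemes $\prod_i \textrm{Hilb}^{a_i}_{0}(\mathbb{A}^2)$, invoke Brian\c{c}on for irreducibility and the dimension count $2k+(m-k)=m+|\pi|$, and descend along the finite surjection onto $E_{\pi}^{\circ}$ before closing up via $E_{\pi}=\overline{E_{\pi}^{\circ}}$. No gaps; the only external inputs (Brian\c{c}on's theorem and local triviality of the family of punctual Hilbert schemes over a smooth surface) are exactly the ones the paper also relies on elsewhere, e.g.\ in the proof of Lemma \ref{Sing}.
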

\begin{proof}
This follows from \cite[Section 3.6]{DMM}.    
\end{proof}
\begin{lemma}\label{Sing}
We have
$$\emph{Sing}\, E\,=\,E_{(2,2,\underline{1})}\,\cup\,E_{(3,\underline{1})}$$.
\end{lemma}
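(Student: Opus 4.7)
The plan is to prove the equality set-theoretically by two inclusions, using the étale-local product structure of $\text{Hilb}^m(X)$ along each stratum together with one explicit local computation on $\text{Hilb}^3$ of a smooth surface. For the set-theoretic decomposition I would first make the combinatorial observation that every partition $\pi$ of $m$ with at least one part $\geq 2$ and $\pi \neq (2,\underline{1})$ either has some part $\geq 3$ (in which case $(3,\underline{1})$ specializes to $\pi$ by merging $1$'s with one another and into the part of size $3$, giving $E_{\pi}^{\circ} \subset E_{(3,\underline{1})}$), or has at least two parts equal to $2$ (in which case $(2,2,\underline{1})$ specializes to $\pi$, giving $E_{\pi}^{\circ} \subset E_{(2,2,\underline{1})}$). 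Since $E^{\circ} = E_{(2,\underline{1})}^{\circ}$ is disjoint from $E_{(3,\underline{1})} \cup E_{(2,2,\underline{1})}$, this yields
$$
E \setminus E^{\circ} \;=\; E_{(2,2,\underline{1})} \cup E_{(3,\underline{1})},
$$
so it suffices to show that $E$ is smooth along $E^{\circ}$ and singular at a generic point of each of $E_{(2,2,\underline{1})}$ and $E_{(3,\underline{1})}$; closedness of $\text{Sing}\,E$ then gives both inclusions.

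For the smoothness of $E^{\circ}$ and the singularity at $E_{(2,2,\underline{1})}^{\circ}$ I would use the standard étale-local product structure: at a subscheme $Z = \xi_1 + \cdots + \xi_j + p_1 + \cdots + p_{m - \sum a_i}$ of $\text{Hilb}^m(X)$ with disjoint supports, an étale neighborhood of $[Z]$ is isomorphic to $\prod_{i} \text{Hilb}^{a_i}(U_i) \times \prod_{k} V_k$, under which $E$ pulls back to $\bigcup_{a_i \geq 2} \mathrm{pr}_i^{-1}(E_{a_i})$, the union of the pullbacks of the individual exceptional divisors. For $\pi = (2,\underline{1})$ this is a single smooth $\mathbb{P}^1$-bundle over a diagonal times a smooth open, so $E^{\circ}$ is smooth. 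For $\pi = (2,2,\underline{1})$ it is a transverse union of two smooth divisors, which is non-normal along their intersection, so $E$ is singular at $[Z]$. Taking closure then gives $\text{Sing}\,E \supset E_{(2,2,\underline{1})}$.

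The main obstacle is to show $E$ is singular at a generic point of $E_{(3,\underline{1})}$. The same product reduction reduces this to showing that the exceptional divisor $E_3$ of $\text{Hilb}^3(X) \to S^3 X$ is singular at a generic curvilinear length-$3$ subscheme supported at a single point. Choosing local coordinates with $X = \mathbb{A}^2$ and $Z_0 = V(x^3, y)$ at the origin, the six smooth coordinates $(t_1, t_2, t_3, s_1, s_2, s_3)$ on $\text{Hilb}^3(\mathbb{A}^2)$ near $[Z_0]$ parametrize the ideal $(x^3 + t_1 x^2 + t_2 x + t_3,\; y + s_1 x^2 + s_2 x + s_3)$; the three supports are $(x_i,\, -s_1 x_i^2 - s_2 x_i - s_3)$ with $x_1, x_2, x_3$ the roots of the cubic in $x$, and they lie in the big diagonal of $S^3 \mathbb{A}^2$ exactly when that cubic has a repeated root. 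Hence $E_3$ is locally cut out by the cubic discriminant $\Delta(t_1,t_2,t_3)$, which vanishes to order $\geq 2$ at the origin. The Zariski tangent space of $\{\Delta = 0\}$ at the origin is therefore the whole $(t_1, t_2, t_3)$-space, and together with the free $s$-directions one obtains $\dim T_{[Z_0]} E_3 = 6 > 5 = \dim E_3$. Thus $E_3$ is singular at $[Z_0]$, and closure gives $\text{Sing}\,E \supset E_{(3,\underline{1})}$, completing the proof.
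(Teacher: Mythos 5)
Your proof is correct, and its skeleton agrees with the paper's: the set-theoretic identity $E\setminus E^{\circ}=E_{(2,2,\underline{1})}\cup E_{(3,\underline{1})}$, smoothness of $E^{\circ}$ as a $\mathbb{P}^1$-bundle over $W^{\circ}$, the two analytic branches along $E^{\circ}_{(2,2,\underline{1})}$ coming from the local product structure, and a Zariski tangent-space jump from dimension $5$ to $6$ to detect the singularity along the triple-point locus. The genuine difference is in that last step, which is the hard one. The paper computes the $6$-dimensional tangent space only at the single non-curvilinear point $q=\mathrm{Spec}\,\mathbb{C}[X,Y]/(X,Y)^{2}$, using that the punctual Hilbert scheme of length $3$ is the cone over the twisted cubic; since one point is not dense in $E_{(3)}$, it must then propagate the singularity by invoking the external fact that $E$ is a free divisor in $\mathrm{Hilb}^{3}X$ (so $\mathrm{Sing}\,E$ is empty or of pure codimension one, hence equals the irreducible divisor $E_{(3)}$). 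You instead compute at a curvilinear point $V(x^3,y)$ in the Haiman chart, where the exceptional divisor is the discriminant hypersurface $\{\Delta(t_1,t_2,t_3)=0\}$ whose lowest-order term $-27t_3^{2}$ has degree two; since every curvilinear triple point looks like this in suitable local coordinates and such points are dense in $E_{(3,\underline{1})}$, closedness of $\mathrm{Sing}\,E$ finishes the argument with no appeal to logarithmic differentials. Your route is more elementary and self-contained; the one point you should make explicit is that the discriminant of the generic monic cubic is square-free (indeed irreducible), so that $\{\Delta=0\}$ really is the \emph{reduced} structure on $E\cap U$ — this is exactly where the "vanishing to order $\geq 2$" argument needs the honest ideal to have no linear part, and without reducedness the tangent-space computation would not be valid.
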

\begin{proof}
As $E^{\circ}$ is a $\mathbb{P}^{1}$-bundle over $W^{\circ}$, we have 
$$\textrm{Sing}\,E\,\subset\, E\,\setminus\,E^{\circ}\,=\, E_{(2, 2, \underline{1})}\,\cup\, E_{(3,\underline{1})}.$$

To prove the reverse inclusion, it suffices to show that 
$$E^{\circ}_{(2,2,\underline{1})}\,\subset\, \textrm{Sing}\,E \quad and \quad E^{\circ}_{(3,\underline{1})}\,\subset\, \textrm{Sing}\,E,$$
since $E_{\pi}\,=\, \overline{E_{\pi}^{\circ}}$ for all $\pi$.

This is a local analytic question, so we may assume $X\,=\, \mathbb{A}^{2}$. For any positive integer $a$, and a length $a$ subscheme $q$ of $\mathbb{A}^{2}$ supported at the origin, let 
$$V_{a,q}\,=\,\langle \textrm{Hilb}^{a}(\mathbb{A}^{2}), q \rangle\,\cong\, \langle\mathbb{A}^{2a},0 \rangle.$$

For $a\,\geq\, 2$, let  $E^{g}_{a, q}\,\subseteq\, V_{a,q}$ denote the germ of the exceptional divisor of the Hilbert-Chow morphism
$$\textrm{Hilb}^{m}(\mathbb{A}^{2})\,\to\, S^m (\mathbb{A}^{2})$$
at the point $q$. 

Now, for a point $q^{\prime}\,\in\, \textrm{Hilb}^{m}(\mathbb{A}^{2})$, we may write $q^{\prime}$ as
$$q^{\prime}\,=\, \bigsqcup_{i}(q_{i}\,+\, x_{i}),$$
where each $q_{i}$ is a length $a_{i}$ subscheme supported  at the origin and $x_{1},\ldots, x_{r}$ are distict points in $\mathbb{A}^{2}$. Here $q_{i}\,
+\,x_{i}$ denotes the translation of $q_{i}$ by $x_{i}$. By a similar proof to {\cite[Lemma 2.2]{BSV3}}, there is an isomorphism of germs:
\begin{equation}\label{germ}
\phi\,:\,\prod_{i=1}^{r}V_{a_{i}, q_{i}}\,\longrightarrow\, \langle\textrm{Hilb}^{m}X, q^{\prime} \rangle   
\end{equation}
such that:
\begin{equation*}
\phi^{-1}\langle E,q^{\prime} \rangle\,=\,\bigcup_{\substack{1 \leq i \leq r \\ a_i \geq 2}}\,(V_{a_{1}, q_{1}}\,\times\, \cdots\, \times\, V_{a_{i-1}, q_{i-1}}\,\times\, E^{g}_{a_{i},q_{i}} \,\times\, V_{a_{i+1}, q_{i+1}} \,\times\,\cdots\,\times V_{a_{r},q_{r}}).    
\end{equation*}
Let $q^{\prime}\,\in\, E^{\circ}_{(2,2,\underline{1})}$. By \eqref{germ}, $\phi^{-1}\langle E, q^{\prime} \rangle$ has two distinct components, so $\langle E, q^{\prime} \rangle$ is not analytically irreducible. Thus, 
$$E^{\circ}(2,2, \underline{1})\,\subseteq\, \textrm{Sing}\,E.$$ 

To prove that $E^{\circ}_{(3,\underline{1})}\,\subset\, \textrm{Sing}\,E$, by \eqref{germ} it suffices to show that $E^{g}_{3,q}$ is singular for all subscheme $q$ of $\mathbb{A}^{2}$ supported at the origin. That is, we need to show 
$$E_{(3)}\,\subset\, \textrm{Sing}\,E \quad \textrm{for}\,\, m\,=\,3$$.

Assume $m\,=\,3$. Suppose, for contradiction, that $E_{(3)}\,\nsubseteq\,\textrm{Sing}\,E$. Since $E_{(3)}$ is an irreducible codimension $1$ subvariety of $E$ and $E\,\setminus\, E_{(3)}\,=\, E^{\circ}$ is smooth, it would follow that 
$$\textrm{codim}_{E}(\textrm{Sing}\,E)\,\geq\,2.$$
However, from \cite{BEG} or \cite[Theorem B]{Sta}, we know that $E$ is a free divisor in $\textrm{Hilb}^{3}\,X$, i.e., the sheaf of logarithmic differentials $\Omega(log E)$ is locally free. By {\cite[Section 2]{aleksandrov1990nonisolated}} or \cite[Section 1.4]{BEG}, $E$ is either smooth or codim${_{E}}(\textrm{Sing}\,E)\,=\,1$. As we have shown codim${_{E}}(\textrm{Sing}\,E)\,\geq\,2$, $E$ must be smooth. We will now derive a contradiction by showing that  that $E$ is singular at the point $q$ corresponding to the subscheme $\textrm{Spec}\,\frac{\mathbb{C}[X,Y]}{(X,Y)^{2}}$. We show in fact $\textrm{dim}\,T_{q}E_{(3)}\,=\, 6$, which suffices as dim$\,E\,=\, 5$.

Let 
$$g\,:\,E_{(3)}\,\longrightarrow\,W_{(3)}\,\cong\,X$$

be the restriction of $p$. Let $H$ denote the Hilbert scheme of length $3$ subschemes of $\mathbb{A}^{2}$ supported at the origin. By \cite[Section 4]{Br}, $H$ is isomorphic to the projective cone over the twisted cubic curve and $q\, \in\, H $ is the singular point. So, dim $ T_{q}H=4$. 
Finally, note that $g$ is a locally trivial fibre bundle with each fibre isomorphic to $H$. It follows that $\textrm{dim } T_{q}E_{(3)}\,=\,\textrm{dim } T_{q}H \,+\, \textrm{dim} \hspace{1pt} X\,=\, 4\,+\,2\, =\,6.$    
\end{proof}

\begin{lemma}\label{capcup}
For integers $t$ such that $3\,\leq\,t\,\leq\, m\,-\,2$, we have:
$$E_{(t, \underline{1})}\,\cap\, E_{(t-1, 2, \underline{1})}\,=\, E_{(t+1,\underline{1})}\,\cup\, E_{(t,2,\underline{1})}.$$
Also,
$$E_{(m-1,1)}\,\cap\, E_{(m-2, 2)}\,=\, E_{(m)}.$$   
\end{lemma}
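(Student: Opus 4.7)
The plan is to reduce both identities to a combinatorial statement about partitions of $m$ by working with the Hilbert--Chow morphism $p\colon\mathrm{Hilb}^m X \to S^m X$. Since $E_\pi = p^{-1}(W_\pi)$ with reduced structure (as recalled in \S 2), and $p$ is surjective, one has $E_\pi \cap E_{\pi'} = p^{-1}(W_\pi \cap W_{\pi'})$ as reduced closed subsets, and similarly for unions. It therefore suffices to prove the analogous identities for the $W_\pi$'s in $S^m X$.

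Next, I would record the description that a cycle $z \in S^m X$ lies in $W_\pi$ exactly when its effective partition (the multiset of multiplicities of points in its support) is a \emph{coarsening} of $\pi$, i.e.\ is obtained from $\pi$ by partitioning its parts into blocks and summing within each block. Equivalently, $W_\pi = \bigcup_\mu W_\mu^\circ$ over all coarsenings $\mu$ of $\pi$, so $W_\pi \cap W_{\pi'}$ consists precisely of cycles whose effective partition is a \emph{common} coarsening of $\pi$ and $\pi'$.

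For the first identity (with $3 \le t \le m-2$), the heart of the matter is to show that the common coarsenings of $(t, \underline{1})$ and $(t-1, 2, \underline{1})$ are precisely the coarsenings of $(t+1, \underline{1})$ together with those of $(t, 2, \underline{1})$. One direction is immediate. For the other, let $\mu$ be a common coarsening. When coarsening $(t-1, 2, \underline{1})$ to $\mu$, either the $t-1$ and the $2$ end up in the same block---yielding a part of $\mu$ of size $\ge t+1$ and hence exhibiting $\mu$ as a coarsening of $(t+1, \underline{1})$---or they end up in different blocks, yielding a part $\mu_a \ge t-1$ and a different part $\mu_b \ge 2$. In this second subcase, using also that $\mu$ coarsens $(t, \underline{1})$ and so has some part of size $\ge t$, a short case split on whether $\mu_a \ge t$ or $\mu_a = t-1$ shows (with the hypothesis $t \ge 3$ ensuring $t-1 \ge 2$, so that $\mu_a$ itself can serve as the ``separate $\ge 2$'' part when a larger part lies elsewhere) that $\mu$ must have a part of size $\ge t$ together with a separate part of size $\ge 2$, hence is a coarsening of $(t, 2, \underline{1})$.

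The second identity is a clean instance of the same principle: the only coarsenings of $(m-1, 1)$ are $(m-1, 1)$ and $(m)$, and likewise for $(m-2, 2)$, so the unique common coarsening is $(m)$, giving $W_{(m-1,1)} \cap W_{(m-2,2)} = W_{(m)}$ and hence $E_{(m-1,1)} \cap E_{(m-2,2)} = E_{(m)}$ after pulling back via $p$. The only genuinely delicate step is the $t \ge 3$ case split in the first identity; the remaining work is routine bookkeeping with partitions.
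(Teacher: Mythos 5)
Your proposal is correct and follows essentially the same route as the paper: both reduce the statement to the corresponding identities for the strata $W_{\pi}$ in $S^{m}X$ via $E_{\pi}=p^{-1}(W_{\pi})$, and then settle those by elementary bookkeeping with coarsenings of partitions. The only cosmetic differences are that the paper organizes the case split around the largest part $a_{1}$ of the common coarsening (if $a_{1}\geq t+1$ one lands in $W_{(t+1,\underline{1})}$, and if $a_{1}=t$ a second part of size $\geq 2$ is forced), and it cites an earlier lemma for $W_{(m-1,1)}\cap W_{(m-2,2)}=W_{(m)}$ rather than reproving it.
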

\begin{proof}
Since each $E_{\pi}$ is the preimage $p^{-1}(W_{\pi})$ under the Hilbert-Chow morphism $p$, it is enough to prove:
\begin{itemize}
\item[$(a)$] $W_{(t,\underline{1})}\,\cap\, W_{(t-1,2,\underline{1})}\,=\, W_{(t+1,\underline{1})}\,\cup\, W_{(t,2,\underline{1})},$\quad\textrm{for all}\,\, $3\,\leq\,t\,\leq m-2$, 
\item[$(b)$] $W_{(m-1,1)}\,\cap\,W_{(m-2,2)}\,=\, W_{(m)}$.
\end{itemize}
Part (b) follows directly from {\cite[Lemma 2.1(v)]{BSV3}}.

To prove part $(a)$, it suffices to show: 

If a partition $\pi$ satisfies $\pi\,>\,(t,\underline{1})$ and $\pi\,>\,(t-1, 2, \underline{1})$, then either $\pi\,\geq\,(t+1,\underline{1})$ or $\pi\,\geq\, (t, 2, \underline{1})$.

Let $\pi\,=\, (a_{1},\cdots,a_{r})$. Since $\pi\,>\, (t, \underline{1})$, we have $a_{1}\,\geq\, t$. If $a_{1}\,\geq\, t+1$, then $\pi\,\geq\, (t+1,\underline{1})$. If $a_{1}\,=\, t$, then $a_{2}\,\geq\, 2$ as $\pi\,\neq\, (t,\underline{1})$. So, $\pi\,\geq\,(t, 2, \underline{1})$. This completes the proof of the Lemma.
\end{proof}

\textit{Proof of Theorem \ref{descent}:}

\begin{proof}
    Let $\textnormal{Hilb}^{m}(X)\,\xrightarrow{\,\,\,p\,\,\,}\, S^{m} X$ be the Hilbert-Chow morphism. We consider the three cases separately. 

\underline{\textrm{Case}\,1}: $m\,=\,2.$

If $X\, =\, \mathbb{P}^{2}$, the result follows from \cite{belmans2020automorphisms}.
Assume now that $X\,\not\cong\, \mathbb{P}^{2}$. We identify $X$ as a subvariety of $S^{2} X$ under the diagonal embedding. Then $p(E)\, = \,X$, and the restriction $p_{{|}_{E}}\, :\, E\, \longrightarrow\, X$ identifies $E$ with $\mathbb{P}_{X}(\Omega_{X})$, the projectivized cotangent bundle over $X$. Since $\phi_{{|}E}\, :\, E\,\longrightarrow\, E$ is an automorphism, by Corollary \ref{Pomega}, it must be over an automorphism of $X$. Therefore, any curve contracted by $p$ must also be contracted by $p\,\circ\, \phi$, which means $\phi$ descends to an automorphism of $S^2X.$

\underline{\textrm{Case}\,2}: $m\,=\,3.$

If $X\,=\, \mathbb{P}^{2}$, the result again follows from \cite{belmans2020automorphisms}. Now assume $X\,\not\cong \,\mathbb{P}^{2}$. Fix a point $w\,\in\,W_{(3)}$. By Lemma \ref{Hilbert drum}, the fiber $p^{-1}(w)$ is irreducible of dimension $2(n-1)$, and its  normalization $D$ has Picard rank $1$. Let $p_{1}\,:=\, (p\,\circ\,\phi)|_{D} \,:\,D\,\xlongrightarrow\, W$ be the restriction to $D$. Since $D$ is normal and the normalization of $W$ is isomorphic to $X\,\times\,X$ by {\cite[Theorem 2]{BSV3}}, $p_{1}$ induces a morphism $p_{1}^{\prime}\,:\, D\,\longrightarrow\, X\,\times\,X$.

We now claim that $p_{1}^{\prime}$ is constant.

If $n\,\geq\,3$, then $2(n\,-\,1)\,>\,n$, so any morphism from $D$ to $X$ must be constant, since $\rho(D)\,=\,1$. If $n\,=\,2$, and there is a non-constant map $D\,\longrightarrow\,X$, it must be surjective as $\rho(D)\,=\,1$. But $D$ is rational, (in fact, $D$ is the weighted projective plane $\mathbb{P}(1,1,3)$ by Remark \ref{drum}), we must have $X$ is a rational surface of Picard rank $1$, hence $X\,\cong\,\mathbb{P}^{2}$; a contradiction. Thus, in all cases, any map $D\,\longrightarrow\, X$ must be constant, and so $p_{1}^{\prime}$ is constant. Hence $p_{1}$ is constant. That means $p^{-1}(w)$ is mapped to a point under $p\,\circ\,\phi$. By Lemma \ref{rho}, any curve contracted by $p$ must be contracted by $p\,\circ\,\phi$. So, $\phi$ descends to an automorphism of $S^2X.$ 

\underline{\textrm{Case} \,3}: $n\,=\, 2$.

Assume $m\,\geq\,4$, as otherwise we are done by cases 1 and 2. We now prove the following:

\underline{Claim}: $\phi(E_{(m)})\,=\, E_{(m)}$.

\underline{Proof of the claim}: Define a sequence of closed subsets $F_{t}$ of $E$ for $3\,\leq\, t\,\leq\, m$ as follows:
\begin{itemize}
\item Let $F_{3}\,=\,\textrm{Sing}\,E$, the singular locus of $E$
\item Define $F_{t+1}$ to be the intersection of the irreducible components of $F_{t}$ for each $3\,\leq t\,\leq\, m\,-\,1$.
\end{itemize}

Since $\phi(E)\,=\, E$, by induction $\phi(F_{t})\,=\, F_{t}$ for all $t$. In particular, $\phi(F_{m})\,=\, F_{m}$. On the other hand, from Lemma \ref{Sing},  we know that $F_{3}\,=\, E_{(2,2,\underline{1})}\,\cup\, E_{(3, \underline{1})}$. Suppose that for some $3\,\leq\, t\,\leq m-2$, we have $F_{t}\,=\, E_{(t,\underline{1})}\,\cup\, E_{(t-1, 2, \underline{1})}$.  Then both $E_{(t,\underline{1})}$ and $E_{(t-1,2, \underline{1})}$ are the irreducible components of $F_{t}$, as they are irreducible and have same dimension by Lemma \ref{local}. So, Lemma $\ref{capcup}$, tells us that:
$$F_{t+1}\,=\, E_{(t+1,\underline{1})}\,\cup\, E_{(t, 2, \underline{1})}.$$ 
Continuing this inductively, we find $F_{t}\,=\, E_{(t,\underline{1})}$ for all $3\,\leq\,t\,\leq m-1$. So, $F_{m}\,=\, E_{(m)}$ by Lemma \ref{capcup}. Therefore, $\phi(E_{(m)})\,=\, E_{(m)}$. This completes the proof of the claim.

Now we complete the proof. By Lemma \ref{rho}, it suffices to show that some curve contracted by $p$ is also contracted by $p\,\circ\,\phi$. Suppose not. Let $H$ be a fiber of  the map $E_{(m)}\,\xlongrightarrow{p}\,W_{(m)}$. Then the composition $H\,\xlongrightarrow{p\circ\phi}\,S^{m}X$ must be finite. However, by the claim, $p\,\circ\,\phi(H)\,\subset\, W_{(m)}$. Also, $\text{dim}(H)\,=\, m-1$, by Lemma \ref{local} and $\textrm{dim }(W_{(m)})\,=\, 2$, by {\cite[Theorem 2]{BSV3}}. Since $m-1>2$, the map $p\circ \phi$ cannot be finite, a contradiction. 
\end{proof}
\begin{remark}
Note that we have also shown in the proof that for any automorphism $\phi$ of $\textrm{Hilb}^{m}X$, if $\phi$ preserves the small diagonal $E_{(m)}$, then $\phi$ is natural.
\end{remark}
\begin{remark}\label{determine}
    The same proof also shows that for $m,n$ as in Theorem \ref{2}, and for smooth projective varieties $X$ and $Y$ of dimension $n$, any isomorphism $\textrm{Hilb}^{m}X\to \textrm{Hilb}^{m}Y$ that carries the big diagonal to the big diagonal descends to an isomorphism $S^mX\to S^mY$, and hence $X\cong Y$ by {\cite[Corollary 4.2]{BSV4}}. Thus, the pair of $\textrm{Hilb}^{m}X$ and the big diagonal determines $X.$
\end{remark}

\textit{Proof of Theorem \ref{lift}:}

\begin{proof}
    
For $X$ a surface, this follows almost directly from \cite[Proposition 2]{Gir}, as by \cite{belmans2020automorphisms}, $\phi$ lifts to an automorphism $\widetilde{\phi}$ of $X^{m}$. However, a small complication arises when $m\,=\,6$: since $S_{6}$ has an automorphism which is not inner, it is not clear whether one can choose $\widetilde{\phi}$ to be $S_{6}$--equivariant. What is known is that $\widetilde{\phi}$ normalizes the image of $S_m$ in Aut$(X^m)$. 

To avoid this complication- and to give a proof that also works for $n\geq 3$ and $m\leq 3$--we sketch an alternative proof below.

Let $H\,=\, \textrm{Hilb}^{m}X$, and let $H\,\xlongrightarrow\, S^{m}X$ denote the Hilbert-Chow morphism. Let  $W\,=\, \textrm{Sing}(S^{m}X)$, $W_{1}\,=\, \textrm{Sing}(W)$, $E_{1}\,=\, p^{-1}W_{1}$, and $E\,=\, p^{-1}W$. Clearly, $\phi(W)\,=\, W$, $\phi(W_{1})\,=\, W_{1}$. By \cite{chang1979hilbert} and \cite[Exercise 7.3E (5)]{BK}, the map $H\setminus E_{1}\,\xlongrightarrow{p}\,S^{m}X\setminus W_{1}$ is the blow up along $W\setminus W_{1}$. Thus, $\phi$ lifts to an automorphism $\phi_{1}$ of $H\setminus E_{1}$ that preserves $E\setminus E_1$. By Lemma \ref{rho}, there is $\alpha\,\in\,\mathbb{Z}$ and an ample line bundle $L$ on $S^{m}X$ such that both $p^{\ast}L\,\otimes\, \mathcal{O}(\alpha E)$ and $p^{\ast}\phi^{\ast}L\,\otimes\, \mathcal{O}(\alpha E)$ are ample. Since $\phi_{1}^{-1}(E\setminus E_{1})\,=\,E\setminus E_{1}$, we conclude by the following lemma.

\end{proof}
\begin{lemma}
Let $X$ and $Y$ be normal projective varieties. Let $U\,\subseteq\, X$ and $V\,\subseteq\, Y$ be  open subsets such that $\emph{codim}(X\setminus U)\,\geq\,2$ and $\emph{codim}({Y\setminus W})\,\geq\, 2$. Let $L$ and $M$ be ample line bundles on $X$ and $Y$, respectively. Suppose there is an isomorphism $\phi:\,U\longrightarrow\, V$ such that $\phi^{\ast}(M|_{V})\,=\, L|_{U}$. Then $\phi$ extends to an isomorphism $X\,\longrightarrow\,Y$.
\end{lemma}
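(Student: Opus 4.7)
The plan is to reconstruct $X$ and $Y$ from their section rings and use that $\phi$ identifies these section rings.

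\textbf{Step 1 (Hartogs).} Since $X$ is normal and $X\setminus U$ has codimension $\geq 2$, every section of $L^n$ on $U$ extends uniquely to a section on $X$. Hence the restriction maps
\[
H^0(X,L^{\otimes n})\,\longrightarrow\, H^0(U, L^{\otimes n}|_U)
\]
are isomorphisms for every $n\geq 0$. The analogous statement holds for $(Y,M,V)$.

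\textbf{Step 2 (Transport of sections).} Fix an isomorphism $\alpha\colon \phi^{\ast}(M|_V)\xrightarrow{\sim} L|_U$ witnessing the hypothesis. For every $n$, the composite
\[
\Phi_n\,:\, H^0(V, M^{\otimes n}|_V)\,\xrightarrow{\,\phi^{\ast}\,}\, H^0(U, \phi^{\ast}(M^{\otimes n}|_V))\,\xrightarrow{\,\alpha^{\otimes n}\,}\, H^0(U, L^{\otimes n}|_U)
\]
is an isomorphism, and $\bigoplus_n \Phi_n$ is a graded ring isomorphism. Combined with Step 1, this gives a graded ring isomorphism $R(Y,M)\,\cong\, R(X,L)$ between the section rings.

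\textbf{Step 3 (Embedding into a common projective space).} Choose $d>0$ so that both $L^{\otimes d}$ and $M^{\otimes d}$ are very ample. Pick a basis $s_0,\dots,s_N$ of $H^0(X, L^{\otimes d})$ and set $t_i := \Phi_d^{-1}(s_i)$, which is then a basis of $H^0(Y, M^{\otimes d})$. Let
\[
i\colon X\,\hookrightarrow\,\mathbb{P}^N,\qquad j\colon Y\,\hookrightarrow\,\mathbb{P}^N
\]
be the corresponding closed embeddings.

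\textbf{Step 4 (Matching the embeddings on $U$).} By construction, on $U$ we have the identity of sections $s_i = \alpha^{\otimes d}\bigl(\phi^{\ast} t_i\bigr)$ in $H^0(U, L^{\otimes d}|_U)$. In local trivializations of $L^{\otimes d}$ and $M^{\otimes d}$, the factor $\alpha^{\otimes d}$ contributes a common nonvanishing scalar at each point, so the projective coordinates agree:
\[
[s_0(x):\cdots:s_N(x)]\,=\,[t_0(\phi(x)):\cdots:t_N(\phi(x))]\quad\text{for all } x\in U.
\]
Therefore $i|_U = j\circ\phi$. Taking closures, $i(X)=\overline{i(U)}=\overline{j(V)}=j(Y)$, and the isomorphism $j^{-1}\circ i\colon X\to Y$ extends $\phi$.

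The only nontrivial point is Step 4, namely verifying that the abstract graded ring isomorphism from Step 2 actually produces the geometric extension of $\phi$; this is why one tracks the explicit isomorphism $\alpha$ and the bases $s_i, t_i$, rather than merely invoking $\operatorname{Proj}$ of an abstract isomorphism of section rings.
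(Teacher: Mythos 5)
Your proof is correct and follows essentially the same route as the paper's: use normality and the codimension-two condition to identify $H^0(X,L^{\otimes d})$ with $H^0(U,L^{\otimes d}|_U)$ (and likewise for $Y$), embed both varieties into the same projective space via matching bases, observe the embeddings agree on $U$ and $V$ up to $\phi$, and take closures. Your Step 4, making explicit that the trivializing isomorphism $\alpha$ only rescales projective coordinates, is a worthwhile elaboration of a point the paper leaves implicit, but it is not a different argument.
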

\begin{proof}
After replacing $L$ and $M$ by suitable tensor powers, we may assume they are very ample. Thus, we have embeddings $X\, \hookrightarrow \, \mathbb{P}(H^0(X, L))$ and $Y\, \hookrightarrow \,\mathbb{P}(H^{0}(Y, M)).$ Since $X$ and $Y$ are normal, we have:
\begin{equation*}
H^{0}(X, L)\,=\, H^{0}(U, L|_{U}) \quad\textrm{and}\quad  H^{0}(Y, M)\,=\, H^{0}(V, M|_{V}).
\end{equation*}
Thus, $\phi$ induces an isomorphism: 
\begin{equation*}
\mathbb{P}(H^0(X, L))\,\xlongrightarrow{\widetilde{\phi}}\, \mathbb{P}(H^0(Y, M)),\quad \textrm{with}\quad \widetilde{\phi}(U)\,=\, V.
\end{equation*} 
So, $\widetilde{\phi}(\overline{U})\,=\, \overline{V}$, i.e., $\widetilde{\phi}(X)\,=\, Y$. Hence, $\phi$ extends to an isomorphism from $X$ to $Y$.
\end{proof}

\section{Automorphisms of symmetric power of Higher dimensional varieties}

In this section, we prove Theorem \ref{3}$(i)$. The idea of the proof is similar to that in \cite{Ha}: first, by the results in \cite{belmans2020automorphisms}, any automorphism of $S^mX$ lifts to an automorphism of $X^m$. We then study which automorphisms of $X^m$ descend to $S^mX$.

Unlike in \cite{Ha}, we must analyze this last point more carefully, as we do not assume $X$ to be indecomposable. Essentially, we aim to describe the normalizer of $S_m$ in $\textrm{Aut }(X^m)$. For $m\neq 6$, since every automorphism of $S_m$ is inner, it suffices to describe the centralizer of $S_m$ in $\textrm{Aut }(X^m)$, which allows for slightly simpler arguments. However, we present a unified approach that works for all $m$. 

Using the following Lemma, we first obtain a description of the group $\textrm{Aut }(X^m)$, and then perform group-theoretic computation to describe the normalizer.

\begin{lemma}\label{autoproduct}
Let $r, a_{1},\ldots, a_{r}$ be positive integers, and let $Y_{1},\ldots, Y_{r}$ be pairwise non-isomorphic indecomposable normal projective varieties. Suppose that either $H^{1}(Y_{i}, \mathcal{O}_{Y_{i}}) = 0$ for all $i$, or that each $Y_i$ is smooth with $K_{Y_i}$ ample. Let $Y = \prod _{i} Y_{i}^{a_{i}}$. Then, 
$$\textnormal{Aut}(Y)\, \cong \,\prod_{i}(\textnormal{Aut}(Y_{i})^{a_{i}}\,\rtimes\, S_{a_{i}}).$$
\end{lemma}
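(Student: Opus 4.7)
The plan is to first establish a uniqueness-of-decomposition statement for $Y$ into indecomposable factors, and then to show that any automorphism of $Y$ must respect this decomposition up to permutation of isomorphic factors and componentwise automorphisms. The conclusion then follows by bookkeeping.

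The key rigidity statement I would prove (or invoke from the literature) is the following: under either hypothesis, any surjective morphism $f : Y \to Y_j$ factors as $f = \alpha \circ p_{j,\ell}$, where $p_{j,\ell} : Y \to Y_{j,\ell}$ is the projection onto one of the copies of $Y_j$ in $Y$ and $\alpha \in \textnormal{Aut}(Y_j)$. In the ample canonical case, this follows from standard rigidity results for morphisms between varieties of general type (Iitaka fibration / Kobayashi-Ochiai type arguments) combined with the finiteness of $\textnormal{Aut}(Y_j)$. In the $H^1 = 0$ case, the obstruction theory for morphisms to $Y_j$ is controlled by $H^0(Y, f^{\ast} T_{Y_j})$, and an Albanese / deformation argument together with the indecomposability of $Y_j$ forces the morphism to factor through a single projection; here one crucially uses that the irregularity of $Y$ vanishes by K\"unneth, so there are no continuous families of morphisms to worry about.

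Given this key lemma, I would proceed as follows. Let $\phi \in \textnormal{Aut}(Y)$, and write the coordinates of $Y$ as $Y_{i,k}$ for $1 \le i \le r$, $1 \le k \le a_i$. For each pair $(i,k)$, consider the composition $p_{i,k} \circ \phi : Y \to Y_i$, where $p_{i,k}$ is the projection onto $Y_{i,k}$. By the key lemma, this factors as $\alpha_{i,k} \circ p_{j(i,k), \ell(i,k)}$ for indices $j(i,k), \ell(i,k)$ and an isomorphism $\alpha_{i,k} : Y_{j(i,k)} \to Y_i$. Since distinct $Y_i$ are pairwise non-isomorphic, this forces $j(i,k) = i$, so setting $\sigma_i(k) := \ell(i,k)$ defines a self-map of $\{1,\ldots,a_i\}$. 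Running the argument with $\phi^{-1}$ shows each $\sigma_i$ is a bijection, and that the collection $((\sigma_i)_i, (\alpha_{i,k})_{i,k})$ determines $\phi$ uniquely.

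Finally, I would verify that the assignment $\phi \mapsto ((\sigma_i)_i, (\alpha_{i,k})_{i,k})$ defines a group isomorphism to $\prod_i (\textnormal{Aut}(Y_i)^{a_i} \rtimes S_{a_i})$, where $S_{a_i}$ acts on $\textnormal{Aut}(Y_i)^{a_i}$ by permuting coordinates. Surjectivity is clear since every element on the right side manifestly defines an automorphism of $Y$; injectivity follows from the uniqueness above; and compatibility with composition is a direct computation. The main obstacle is the key rigidity lemma; once that is in hand the rest is a routine argument. A subtle point worth flagging is that in the $H^1=0$ case the hypothesis is on the $Y_i$ individually, and one needs K\"unneth to transfer the vanishing to intermediate products $\prod_{i} Y_i^{a_i}$ that appear in the inductive step of the rigidity proof.
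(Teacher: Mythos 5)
Your overall strategy---reduce everything to a rigidity statement for the components $p_{i,k}\circ\phi$ and then do group-theoretic bookkeeping---has the right shape, and the bookkeeping at the end is fine. The problem is that your ``key rigidity lemma'' is false as stated, and the sketch you give does not prove a corrected version either. As stated, you claim every \emph{surjective} morphism $f:Y\to Y_j$ equals $\alpha\circ p_{j,\ell}$ with $\alpha\in\mathrm{Aut}(Y_j)$. Already for $r=1$, $a_1=1$, $Y=Y_1=\mathbb{P}^1$ a degree-two self-map is a counterexample; and in the ample-canonical case, if $Y_1$ has genus $2$ and $Y_2$ is a non-isomorphic double cover $g:Y_2\to Y_1$, then $g\circ p_2:Y_1\times Y_2\to Y_1$ is surjective but is non-constant on the fibres of the unique projection to $Y_1$, so it factors through no $\alpha\circ p_{1,\ell}$. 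The lemma must therefore be restricted to contractions (equivalently, to components of automorphisms), and in that form it \emph{is} essentially the whole content of the result---but that is exactly the step you do not prove. ``Obstruction theory controlled by $H^0(Y,f^*T_{Y_j})$'' only describes deformations of a given morphism; the absence of continuous families of morphisms does not force a single rigid morphism to factor through a projection. (Moreover, in the $H^1=0$ case the $Y_i$ are only assumed normal, so $T_{Y_j}$ need not be a bundle and this deformation-theoretic framing does not directly apply.)

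For comparison, the paper does not prove this lemma at all: it cites the proof of \cite[Theorem 4.2]{BHPS} for the ample-canonical case and \cite[Theorem 2.7, Corollary 2.8]{Ha} (see also \cite[Theorem 4.1]{Og}) for the $H^1=0$ case. The arguments there are of a different nature from what you sketch: in the $H^1=0$ case one uses the decomposition $\mathrm{Pic}(Y)\cong\bigoplus_i\mathrm{Pic}(Y_i)^{\oplus a_i}$ (valid because the Albanese varieties of the factors vanish) together with the induced action of $\phi$ on nef classes to see that $\phi$ permutes the contractions $p_{i,k}$; the canonically polarized case rests on the uniqueness of the decomposition of such a variety into indecomposable factors. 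To repair your proposal you would need either to reproduce one of those arguments or to give an actual proof of the corrected rigidity statement for contractions; as written, the central step is missing.
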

\begin{proof}
If all $Y_i$'s are smooth with ample canonical class, then the result follows from the proof of {\cite[Theorem 4.2]{BHPS}}. If $H^{1}(Y_{i}, \mathcal{O}_{Y_{i}}) = 0$ for all $i$, then the same proof as {\cite[Theorem 2.7, Corollary 2.8]{Ha}} proves the lemma (see also {\cite[Theorem 4.1]{Og}}). 
\end{proof}

Next, we need the following group-theoretic lemmas.

\begin{lemma}\label{product}
Let  $r\geq 1$, and let $ G_{1},G_{2},\ldots ,G_{r}$ and $T_{1},\ldots,T_{r}$ be groups. Let $S$ be a finite group embedded as a subgroup of each $T_{i}.$ Let $ h_{i}: T_{i} \,\longrightarrow \,G_{i}$ be injective group homomorphisms, and $K_{i}\,\leq\, G_{i}$ be a subgroups commuting with $\textnormal{im}(h_{i}).$ Let $ f_{i}\,=\, h_{i}{\mid_{S}}.$ Suppose 
$$\textnormal{N}_{G_{i}}(\textnormal{im}(f_{i})) \,=\, K_{i}\cdot \textnormal{im}(h_{i}) \hspace{5pt}\textnormal{for all}\hspace{3pt} i.$$ Define $$ G\, :=\, \prod_{i}G_{i}, \quad h\,:=\, \prod_{i}h_{i} : \prod_{i}T_{i} \longrightarrow G, \quad f\,:=\, (f_{i})_{i} : S \longrightarrow G.$$  Then the following holds:
\begin{enumerate}
\item[$(i)$] If  $ T_{i}= S \medspace \textnormal{for all} \medspace i,$ and $ \mathcal{Z}(S)=1,$ then $ \textnormal{N}_{G}(\textnormal{im}(f))= \textnormal{im}(f)\cdot \prod_{i}K_{i}.$ 
\item[$(ii)$] If each $ T_{i}$ is abelian, then $ \textnormal{N}_{G}(\textnormal{im}(f))= \textnormal{im}(h)\cdot \prod_{i}K_{i}.$  
\end{enumerate}
\end{lemma}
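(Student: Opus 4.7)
The plan is to analyze an arbitrary $g = (g_1, \ldots, g_r) \in \textnormal{N}_G(\textnormal{im}(f))$ by projecting onto each factor and invoking the hypothesis $\textnormal{N}_{G_i}(\textnormal{im}(f_i)) = K_i \cdot \textnormal{im}(h_i)$. Since conjugation by $g$ preserves $\textnormal{im}(f)$ and $f$ is injective, it induces an automorphism $\alpha$ of $S$ characterized by $g\, f(s)\, g^{-1} = f(\alpha(s))$. Reading off the $i$-th coordinate gives $g_i\, f_i(s)\, g_i^{-1} = f_i(\alpha(s))$, so $g_i$ normalizes $\textnormal{im}(f_i)$, and by hypothesis we may write $g_i = k_i\, h_i(t_i)$ with $k_i \in K_i$ and $t_i \in T_i$. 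Because $K_i$ commutes with $\textnormal{im}(h_i) \supseteq \textnormal{im}(f_i)$, the factor $k_i$ drops out of the conjugation, leaving the key relation
\[
h_i(t_i)\, f_i(s)\, h_i(t_i)^{-1} \,=\, f_i(\alpha(s)) \qquad \text{for all } s \in S \text{ and all } i.
\]

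For part $(i)$, where $T_i = S$ and hence $h_i = f_i$, injectivity of $f_i$ turns this into $t_i\, s\, t_i^{-1} = \alpha(s)$ for every $s \in S$. Thus for any $i, j$ the element $t_j^{-1} t_i$ lies in $C_S(S) = \mathcal{Z}(S)$, which is trivial by hypothesis, so $t_1 = \cdots = t_r =: t \in S$. Consequently $g = (k_1, \ldots, k_r) \cdot f(t) \in \textnormal{im}(f) \cdot \prod_i K_i$. The reverse inclusion is immediate: $\textnormal{im}(f)$ normalizes itself, and each $K_i$ centralizes $\textnormal{im}(f_i)$, so $\prod_i K_i$ centralizes $\textnormal{im}(f)$.

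For part $(ii)$, where each $T_i$ is abelian and contains $S$, the element $h_i(t_i)$ commutes with $h_i(s) = f_i(s)$, so the displayed relation collapses to $f_i(s) = f_i(\alpha(s))$; injectivity of $f_i$ forces $\alpha = \textnormal{id}$. Using again that $K_i$ and $\textnormal{im}(h_i)$ commute, we rewrite $g_i = h_i(t_i)\, k_i$, whence $g = h\bigl((t_i)_i\bigr) \cdot (k_1, \ldots, k_r) \in \textnormal{im}(h) \cdot \prod_i K_i$. The reverse inclusion follows once more: $\textnormal{im}(h)$ centralizes $\textnormal{im}(f)$ by abelianness of each $T_i$, and $\prod_i K_i$ does so as before.

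The main step in both parts is the factor-by-factor decomposition $g_i = k_i\, h_i(t_i)$ supplied by the hypothesis on $\textnormal{N}_{G_i}(\textnormal{im}(f_i))$; everything else is bookkeeping. The only genuine subtlety is the coordination of the $t_i$ across factors in part $(i)$: they must all coincide so that the product lies in the diagonal $\textnormal{im}(f)$, and this is precisely what $\mathcal{Z}(S) = 1$ secures. In part $(ii)$ abelianness of $T_i$ annihilates the conjugation action of $h_i(t_i)$ on $\textnormal{im}(f_i)$, so the $t_i$ are free to vary independently and the full image of $h$ (rather than just the diagonal $\textnormal{im}(f)$) appears in the conclusion.
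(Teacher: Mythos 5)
Your proof is correct and follows essentially the same route as the paper's: project $g$ onto each factor, use the hypothesis to write $g_i = k_i\,h_i(t_i)$, cancel $k_i$ by commutativity, and then use $\mathcal{Z}(S)=1$ (in part $(i)$) to force all the $t_i$ to coincide. The only cosmetic difference is that you package the conjugation action as an automorphism $\alpha$ of $S$ and observe $\alpha=\textnormal{id}$ in part $(ii)$, which the paper does not need and does not bother to record.
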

 \begin{proof}
Note that  $ \prod_{i}K_{i}$ commutes with $ \textnormal{im} (h)\, \supset\,  \textnormal{im}(f),$ so both products $\textnormal{im}(f) \cdot \prod_{i}K_{i}$ and $\textnormal{im}(h) \cdot \prod_{i}K_{i}$ in parts $(i)$ and $(ii)$ are subgroups of $G$. Clearly, 
$$\textnormal{im}(f) \cdot \prod_{i}K_{i} \subset \textnormal{N}_{G}(\textnormal{im}(f)) \quad in\,\, (i),\quad \textrm{and}  \quad \textnormal{im}(h) \cdot \prod_{i}K_{i} \subset \textnormal{N}_{G}(\textnormal{im}(f))\quad in \,\,(ii).$$
To show the reverse inclusions, let $ \underline{g}= (g_{i})_{i}\in \textnormal{N}_{G}(\textnormal{im}(f))$, where $ g_{i} \in G_{i}$ for all $i$.  Then for every $s\in S,$ there exists $ s^{\prime}=s^{\prime}(s) \in S$ such that 
$$g f(s) g^{-1}= f(s^{\prime}).$$ 
This implies    
\begin{equation} \label{group1}
g_{i}f_{i}(s)g_{i}^{-1}= f_{i}(s') \textnormal{ for all }  i.
  \end{equation} 
This implies  $g_{i} \in \textnormal{N}_{G_{i}}(\textnormal{im}(f_{i}))$ for all $i$, so $ g_{i} \in K_{i} \cdot \textnormal{im}(h_{i}).$ Write $g_{i}=k_{i} \cdot h_{i}(t_{i})= h_{i}(t_{i})\cdot k_{i}$, where $k_i\in K_i$ and $t_i\in T_i$ . As $K_{i}$ commutes with $\textnormal{im}(f_{i}),$  \eqref{group1} implies
 \begin{equation}\label{group2}
h_{i}(t_{i})f_{i}(s)h_{i}(t_{i}^{-1})=f_{i}(s^{\prime}) \quad \textnormal{for all} \medspace i.
 \end{equation}
Now now treat cases $(i)$ and $(ii)$ separately.

$(i)$ Here,  $h_i\,=\,f_i$ for all $i$, so \ref{group2} becomes, 
$$f_{i}(t_{i}  s t_{i}^{-1})\,=\,f_{i}(s^{\prime}) \quad \textnormal{for all} \medspace i,$$ which implies $t_{i} s t_{i}^{-1} \,=\, s^{\prime}$ for all $i$. Therefore, $ t_{i}s t_{i}^{-1}\,=\, t_{j}s t_{j}^{-1}$ for all $i,j$, so $ t_{j}^{-1}t_{i}$  commutes with all $s\,\in\, S.$ Since $ \mathcal{Z}(S)\,=\,1,$ all $t_{i}$'s are equal. Hence, 
 $$g\,=\,(f_{i}(t_{1})k_{i})_{i}\,=\, f(t_{1}) \cdot (k_{i})_{i}\, \in\, \textnormal{im}(f) \cdot \prod_{i}K_{i}.$$ 
$(ii)$ We have $$ g\,=\, (h_{i}(t_{i})k_{i})_{i} \,=\, h((t_{i})_{i})\cdot (k_{i})_{i}\, \in\, \textnormal{im}(h) \cdot \prod_{i} K_{i}.$$
\end{proof}
For a positive integer $n$, we let $ [n]\,=\, \{ 1,2,\cdots,n\}$. Let $m \,\geq\, 2, a\,\geq\, 1$ be integers. Let $S_{ma}$ denote the symmetric group on the set $ [m]\, \times\, [a] .$ We define group embeddings:
$$f\,:\, S_{m} \,\xhookrightarrow\, S_{ma}, \quad \quad g\,:\, S_{a}\,\xhookrightarrow\, S_{ma},$$
given by: 
$$ f(\tau) (i,j)=(\tau(i),j)\quad\textnormal{and}\quad g(\sigma)(i,j)= (i,\sigma({j})).$$ We also have an embedding:
$$h\,:\,(\mathbb{Z}/2 \mathbb{Z})^{a}\, \xhookrightarrow\, S_{2a}$$ 
given by $ h(\epsilon_{1}, \cdots, \epsilon_{a})(i,j)= (i+\epsilon_{j}, j),$ where $i+\epsilon_{j}$ is addition modulo 2. That is: 
$$ 1\,+\,\bar{0}\, =\,1, \quad  1\,+\,\bar{1}\,=\,2,\quad 2\,+\,\bar{0}\,=\,2, \quad 2\,+\, \bar{1}\,=\,1 . $$

Note that for $m=2$, the embedding $f$ coincides with the composition of $h$ and the diagonal embedding of $S_{2}\, = \,{\mathbb{Z}/2 \mathbb{Z}}$ in $ ({\mathbb{Z}/2 \mathbb{Z}})^{a}.$ For $ m\geq 3,$ let $h:=f$ by convention.

So, for any $m\geq 2$, the codomain of $h$ is $S_{ma}$. The domain is $S_m$ if $m\geq 3$, and $(\mathbb{Z}/2 \mathbb{Z})^a$ if $m=2.$

\begin{lemma}\label{Sma}
Let $m\geq 2$, $a\geq 1 $ be integers. In the notation described above, we have $N_{S_{ma}}(\textnormal{im}(f)) = \textnormal{im}(h)\cdot \textnormal{im}(g).$

\end{lemma}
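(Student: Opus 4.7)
My proof plan is to show both inclusions separately, with the orbit structure of $\mathrm{im}(f)$ on $[m]\times[a]$ as the main organizing idea.

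\textbf{The easy inclusion $\supseteq$.} First I verify that $\mathrm{im}(g)$ and $\mathrm{im}(h)$ each normalize $\mathrm{im}(f)$, and that the product $\mathrm{im}(h)\cdot\mathrm{im}(g)$ is actually a subgroup. A direct computation shows $g(\sigma)f(\tau)g(\sigma)^{-1}=f(\tau)$ for all $\sigma\in S_a,\tau\in S_m$, so $\mathrm{im}(g)$ in fact centralizes $\mathrm{im}(f)$. For $\mathrm{im}(h)$: if $m\geq 3$ then $h=f$ so there is nothing to check; if $m=2$ then $\mathrm{im}(h)\cong(\mathbb{Z}/2\mathbb{Z})^a$ is abelian and contains $\mathrm{im}(f)$ (the totally-flipping element equals $f$ of the nontrivial involution), hence normalizes it. Finally, a direct computation yields $g(\sigma)h(\underline\epsilon)g(\sigma)^{-1}=h(\sigma\cdot\underline\epsilon)$ (coordinate permutation), so $\mathrm{im}(g)$ normalizes $\mathrm{im}(h)$ and the set $\mathrm{im}(h)\cdot\mathrm{im}(g)$ is a subgroup contained in $N_{S_{ma}}(\mathrm{im}(f))$.

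\textbf{The main inclusion $\subseteq$.} The key observation is that the orbits of $\mathrm{im}(f)$ on $[m]\times[a]$ are exactly the ``columns'' $[m]\times\{j\}$, $j\in[a]$. Any $\pi\in N_{S_{ma}}(\mathrm{im}(f))$ must permute this set of orbits, giving a well-defined $\sigma\in S_a$ with $\pi([m]\times\{j\})=[m]\times\{\sigma(j)\}$. After replacing $\pi$ by $g(\sigma)^{-1}\pi$, I may assume $\pi$ preserves each column, and is therefore determined by permutations $\pi_j\in S_m$ with $\pi(i,j)=(\pi_j(i),j)$.

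\textbf{The centralizer computation.} The normalizing condition $\pi f(\tau)\pi^{-1}=f(\tau')$ unwinds on each column to $\pi_j\tau\pi_j^{-1}=\tau'$, and since $\tau'$ is independent of $j$, this forces $\pi_{j'}^{-1}\pi_j\in \mathcal{Z}(S_m)$ for every pair $j,j'$. Here the two cases diverge: for $m\geq 3$ we have $\mathcal{Z}(S_m)=1$, so all $\pi_j$ coincide and $\pi=f(\pi_1)\in\mathrm{im}(h)$; for $m=2$ the center is all of $S_2$, so each $\pi_j$ can be arbitrary, but in that case every column-preserving permutation lies in $\mathrm{im}(h)$ by definition. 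In either case $\pi\in\mathrm{im}(h)$, so the original element was in $\mathrm{im}(h)\cdot\mathrm{im}(g)$.

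\textbf{Expected obstacles.} The statement is essentially a combinatorial identity, so I do not anticipate a serious obstacle; the only subtlety is keeping the two cases ($m=2$ versus $m\geq 3$) uniformly organized. The fact that $\mathcal{Z}(S_m)$ is trivial precisely when $m\geq 3$ is what makes $h=f$ the right convention in that range, while for $m=2$ one genuinely needs the larger group $(\mathbb{Z}/2\mathbb{Z})^a$ to absorb the freedom coming from the nontrivial center; writing the argument so that both cases fall out of the same ``$\pi_{j'}^{-1}\pi_j\in\mathcal{Z}(S_m)$'' condition is the cleanest way to present this.
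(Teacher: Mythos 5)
Your proof is correct and follows essentially the same route as the paper's: both arguments extract a permutation $\sigma\in S_a$ of the columns (you via the orbit structure of $\mathrm{im}(f)$, the paper via an explicit coordinate computation showing the second component of $\pi$ is independent of $i$), divide by $g(\sigma)$, and then use the triviality of $\mathcal{Z}(S_m)$ for $m\geq 3$ to force the column-wise permutations $\pi_j$ to coincide. Your unified treatment of $m=2$ through the same $\pi_{j'}^{-1}\pi_j\in\mathcal{Z}(S_m)$ condition is a clean organizational touch (the paper instead handles $m=2$ separately as the centralizer of a product of disjoint transpositions), but the substance is the same.
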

\begin{proof}
If $m=2,$ let $\tau$ denote the nontrivial element of $\textnormal{im}(f)$. Then $\tau$ is a product of disjoint transpositions, from which it is easy to see that 
$$ \textnormal{N}_{S_{2a}}(\textnormal{im}(f))\, =\, \{ \pi \in S_{2a} \medspace | \medspace \pi \medspace \textnormal{commutes with} \medspace \tau\} = \textnormal{im}(h)\cdot \textnormal{im}(g).$$

Now assume $m\,\geq\, 3,$ so that $h\,=\,f$. Note that $\textnormal{im}(g)$ commutes with $ \textnormal{im}(f) $, and thus $ \textnormal{im}(f) \cdot \textnormal{im}(g)$ is a subgroup of $ \textnormal{N}_{S_{ma}}(\textnormal{im}(f)).$ To show the reverse inclusion, let $\pi \in N_{S_{ma}}(\textnormal{im}(f)).$ We want to show that $ \pi \in \textnormal{im}(f)\cdot \textnormal{im}(g).$ Write 
$$ \pi(i,j)\,=\, (\pi_{1}(i,j), \pi_{2}(i,j)),$$ where  $\pi_{1}(i,j)\, \in\, [m]$ and $\pi_{2}(i,j)\,\in\, [a].$  

Since $ \pi\, \in \,\textnormal{N}_{S_{ma}}(\textnormal{im}(f)),$ for each $ \tau\, \in\, S_{m},$ there exists $\tau^{\prime}\,=\,\tau^{\prime}(\tau)\,\in\, S_{m} $ such that 
$$\pi f(\tau)\pi^{-1}= f(\tau^{\prime}).$$ Then for all $(i,j),$ we have 
$$\pi(f(\tau)(i,j))\,=\, f(\tau^{\prime})(\pi(i,j)),$$
     i.e., 
     $$ \pi(\tau(i),j)= (\tau^{\prime}(\pi_{1}(i,j)), \pi_{2}(i,j)),$$
i.e., 
\begin{equation}\label{group3}
 (\pi_{1}(\tau(i),j), \pi_{2}(\tau(i),j))=( \tau^{\prime}(\pi_{1}(i,j)), \pi_{2}(i,j)).
\end{equation}

Above equation implies that $ \pi_{2}(\tau(i),j)= \pi_{2}(i,j)) $ for all $i,j$ and $\tau \in S_{m}.$ Hence, $ \pi_{2}(i,j)$ is independent of $i$, so there exists a function $ p_{2}: [a] \longrightarrow [a]$ such that 
$$\pi_{2}(i,j)=p_{2}(j)\quad\textnormal{for all}\medspace i,j.$$ 

Since $ \pi \in S_{ma}$, the map $\pi_{2}$ must be surjective, and therefore $p_{2}\in S_{a}.$ Composing $\pi$ with $ g({p_{2})^{-1}}$, we may assume that $ p_{2}= id.$

Define $ \eta_{j}(i)= \pi_{1}(i,j),$ so that $ \eta_{j}\in S_{m}$  for all $j \in [a]$, and we can write $ \pi(i,j)\,=\, (\eta_{j}(i),j)$. Substituting this into \eqref{group3}, we  get
$$\eta_{j}(\tau(i))\,=\, \tau^{\prime} \eta_{j}(i)$$ for all $ i,j$. Thus, $ \eta_{j}\tau \eta_{j}^{-1}= \tau^{\prime} $ for all $j$. In particular, for any $j_{1},j_{2}\in [a]$, we have 
$$\eta_{j_{1}} \tau \eta_{j_{1}}^{-1}= \eta_{j_{2}} \tau \eta_{j_{2}}^{-1},$$ 
which implies that $\eta_{j_{1}}^{-1}  \eta_{j_{2}}$ commutes with all $ \tau \in S_{m}.$ Since $ \mathcal{Z}(S_{m})\,=\,1$ for $m\,\geq\, 3$, we conclude that all $\eta_{j}\,$'s are equal. Thus,
$$ \pi= f(\eta_{1})\in \textnormal{im}(f).$$ 
\end{proof}
We now consider the following setup.

\underline{Setup}:   Let $ X_{1},X_{2},\cdots, X_{k}$ be indecomposable smooth projective varieties such that either
$H^{1}(X_{i},\mathcal{O}_{X_{i}})=0 \medspace \medspace \textnormal{for all} \medspace i,$ or the canonical bundle $K_{X_i}$ is ample for all $i.$
 
Let $X\,=\, \prod_{i}X_{i}.$ For an integer $m\, \geq\, 2$, let $G\, := \,\textnormal{Aut}(X^{m})$. Define the group embeddings $$ S_{m} \,\xhookrightarrow{f}\, G,  \quad  \textnormal{Aut}(X) \,\xhookrightarrow{g}\, G,$$ as follows:
$$f(\sigma)(x_{1},\cdots, x_{n})\,:=\, (x_{\sigma^{-1}(1)},\cdots, x_{\sigma^{-1}(m)}), \hspace{7pt} g(\phi)(x_{1},\cdots,x_{m})\,:=\,(\phi(x_{1}),\cdots,\phi(x_{m})) ,$$

where $ x_{1},\cdots,x_{m}\, \in\, X$, $\phi\, \in\, \textnormal{Aut}(X),$ and $\sigma \in S_{m}.$ If $m\,=\,2$, we also define an embedding
$$(\mathbb{Z}/2 \mathbb{Z})^{k} \,\xhookrightarrow{h}\,G$$ given by 

$$ h(\epsilon_{1},\epsilon_{2},\cdots,\epsilon_{k})((x_{ij}))_{\underset{1\leq j \leq k}{1\leq i \leq 2}}= ((x_{i+\epsilon_{j},j} ))_{\underset{1 \leq j \leq k }{ 1 \leq i \leq 2}},$$ 
 where element of $X^{2}$ are written as a matrices $ ((x_{ij}))_{\underset{1 \leq j \leq k}{1 \leq i \leq 2}}$, with $ x_{ij} \in X_{j},$ and $ i+\epsilon_{j}$ is addition modulo 2, as described in Lemma \ref{Sma}. Note that when $ m\,= \,2$, the map $f$ coincides with $h$ composed with the diagonal embedding of $S_{2}\,=\, {\mathbb{Z}/2 \mathbb{Z}}$ into $ (\mathbb{Z}/2 \mathbb{Z})^{k}.$

The following lemma will be crucial in the proof of Theorem \ref{3}.

\begin{lemma}\label{normalizer}
In the above setup, we have:
\begin{enumerate}
\item[(i)] If $ m \geq 3$, then $\textnormal{N}_{G}(\textnormal{im}(f))= \textnormal{im}(f) \cdot \textnormal{im}(g).$
\item[(ii)] If $ m =2 $, then $\textnormal{N}_{G}(\textnormal{im}(f)) = \textnormal{im}(h)\cdot \textnormal{im}(g).$
    \end{enumerate}
\end{lemma}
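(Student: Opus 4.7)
The plan is to decompose $G = \textnormal{Aut}(X^{m})$ as a product via Lemma \ref{autoproduct}, compute the normalizer of the copy of $S_{m}$ inside each factor by a direct conjugation calculation in the semidirect product, and then assemble the result using Lemma \ref{product}. Since $X^{m} \cong \prod_{i} X_{i}^{m}$ and the $X_{i}$'s are pairwise non-isomorphic indecomposables satisfying the hypotheses of Lemma \ref{autoproduct}, we get $G \cong \prod_{i} G_{i}$ with $G_{i} := \textnormal{Aut}(X_{i})^{m} \rtimes S_{m}$. Under this identification, $\textnormal{im}(f)$ is the diagonal copy of $S_{m}$ inside $\prod_{i} S_{m}$; $\textnormal{im}(g)$ is $\prod_{i} K_{i}$, where $K_{i} \subset G_{i}$ denotes the diagonal copy of $\textnormal{Aut}(X_{i})$ inside $\textnormal{Aut}(X_{i})^{m}$; and in the $m = 2$ case $\textnormal{im}(h)$ becomes $\prod_{i} f_{i}(S_{2})$, where $f_{i}\colon S_{m}\hookrightarrow G_{i}$ is the canonical embedding $\sigma \mapsto (\vec{1},\sigma)$.

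Within a single factor $G_{i}$, I would compute $\textnormal{N}_{G_{i}}(f_{i}(S_{m}))$ directly. Writing a general element as $(\vec{\alpha},\sigma)$ with $\vec{\alpha}=(\alpha_{1},\ldots,\alpha_{m}) \in \textnormal{Aut}(X_{i})^{m}$, a short semidirect-product computation gives
\[
(\vec{\alpha},\sigma)\,(\vec{1},\tau)\,(\vec{\alpha},\sigma)^{-1} \,=\, \bigl((\alpha_{j}\,\alpha_{\sigma\tau^{-1}\sigma^{-1}(j)}^{-1})_{j},\,\sigma\tau\sigma^{-1}\bigr).
\]
For this to lie in $f_{i}(S_{m})$ for every $\tau \in S_{m}$, one needs $\alpha_{j} = \alpha_{\rho(j)}$ for all $\rho \in S_{m}$ and all $j$; by transitivity of $S_{m}$ on $\{1,\ldots,m\}$, this forces $\alpha_{1} = \cdots = \alpha_{m}$. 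Hence $\textnormal{N}_{G_{i}}(f_{i}(S_{m})) = K_{i}\cdot f_{i}(S_{m})$, and $K_{i}$ trivially commutes with $f_{i}(S_{m})$.

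Finally, I would feed this into Lemma \ref{product}. For part (i) with $m \geq 3$, take $T_{i} = S_{m}$ and $h_{i} = f_{i}$; since $\mathcal{Z}(S_{m}) = 1$, Lemma \ref{product}(i) yields $\textnormal{N}_{G}(\textnormal{im}(f)) = \textnormal{im}(f) \cdot \prod_{i} K_{i} = \textnormal{im}(f)\cdot \textnormal{im}(g)$. For part (ii) with $m=2$, take $T_{i} = S_{2}$ (abelian) and $h_{i} = f_{i}$; Lemma \ref{product}(ii) then yields $\textnormal{N}_{G}(\textnormal{im}(f)) = \textnormal{im}(h) \cdot \prod_{i} K_{i} = \textnormal{im}(h)\cdot \textnormal{im}(g)$, where the $\textnormal{im}(h)$ appearing in Lemma \ref{product} coincides with the $\textnormal{im}(h)$ in our setup via the identification $\prod_{i} f_{i}(S_{2}) = \textnormal{im}(h)$ noted above. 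The only non-routine aspect of the proof is the bookkeeping of matching $f,g,h$ with their factorwise incarnations in $G \cong \prod_{i} G_{i}$; everything else is a short direct conjugation plus a black-box application of Lemma \ref{product}.
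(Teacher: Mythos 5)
Your single-factor computation and your use of Lemma \ref{product} are both correct as far as they go, but the proof has a genuine gap at the very first step: you assume that the indecomposable factors $X_{1},\ldots,X_{k}$ are pairwise non-isomorphic, which is not part of the setup. The setup only requires the $X_i$ to be indecomposable with $H^1(X_i,\mathcal{O}_{X_i})=0$ (or $K_{X_i}$ ample) for all $i$; repeated factors are allowed, and they must be allowed for the intended application, since the $(\mathbb{Z}/2\mathbb{Z})^{k-1}$ in Theorem \ref{3}(i) counts the factors of $X$ with multiplicity. When some $X_i\cong X_j$ for $i\neq j$, your decomposition $G\cong\prod_i\bigl(\textnormal{Aut}(X_i)^m\rtimes S_m\bigr)$ fails: grouping isomorphic factors as $X=\prod_i Y_i^{a_i}$, Lemma \ref{autoproduct} gives $G=\textnormal{Aut}(X^m)=\prod_i\bigl(\textnormal{Aut}(Y_i)^{ma_i}\rtimes S_{ma_i}\bigr)$, and the full symmetric group $S_{ma_i}$ on the $ma_i$ copies of $Y_i$ is much larger than the group $(S_m)^{a_i}\rtimes S_{a_i}$ of block permutations your decomposition would produce. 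Concretely, for $X=Y\times Y$ your proposal would place the relevant normalizer computation inside $(\textnormal{Aut}(Y)^m\rtimes S_m)^2$, whereas it actually has to be carried out inside $\textnormal{Aut}(Y)^{2m}\rtimes S_{2m}$.

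What is missing is precisely the content of the paper's Lemma \ref{Sma}: a computation of the normalizer, inside $S_{ma}$ acting on $[m]\times[a]$, of the subgroup $S_m$ acting by $\tau\cdot(i,j)=(\tau(i),j)$. That normalizer is $\textnormal{im}(f)\cdot\textnormal{im}(g')$ for $m\geq 3$ (with $g'$ the column permutations $S_a$, which land in $\textnormal{Aut}(X)\subset\textnormal{im}(g)$ and are invisible in your argument), and for $m=2$ it is strictly larger, namely $\textnormal{im}(h)\cdot\textnormal{im}(g')$ with $h:(\mathbb{Z}/2\mathbb{Z})^{a}\hookrightarrow S_{2a}$ swapping the two entries of each column independently; this is exactly where the extra non-natural automorphisms of $S^2X$ come from when $X$ has repeated factors. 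Your argument, restricted to the pairwise non-isomorphic case, is essentially the paper's argument specialized to $a_i=1$ for all $i$; to repair it you would need to prove the analogue of your conjugation computation with $G_i=\textnormal{Aut}(Y_i)^{ma_i}\rtimes S_{ma_i}$, first controlling the $S_{ma_i}$-component of a normalizing element (Lemma \ref{Sma}) and then the $\textnormal{Aut}(Y_i)^{ma_i}$-component, before feeding the result into Lemma \ref{product} with $T_i=(\mathbb{Z}/2\mathbb{Z})^{a_i}$ when $m=2$.
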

\begin{proof}
Group together all the varieties $X_{i}$ that are isomorphic to each other. Then we can write $$X\,=\,\prod_{i=1}^{\ell} Y_{i}^{a_i},$$ where $ Y_{1},Y_{2}, \ldots, Y_{\ell}$ are pairwise non-isomorphic indecomposable smooth projective varieties, each satisfying $ H^{1}(Y_{i}, \mathcal{O}_{Y_{i}})\,=\,0 $, and $\sum_i a_i\,=\,k$. Let $ G_{i}\,:=\, \textnormal{Aut} (Y_{i}^{ma_{i}}).$ Identifying $Y_{i}^{ma_{i}}$ as product of $m$ copies of $ Y_{i}^{a_{i}},$ we have embeddings:
$$ S_{m}\, \xhookrightarrow{f_{i}} G_{i},\quad\textnormal{Aut} (Y_{i}^{a_{i}})\xhookrightarrow{g_{i}}G_{i},$$ and  $(\mathbb{Z}/2\mathbb{Z})^{a_{i}} \xhookrightarrow{h_{i}}G_{i}$ if $m\,=\,2$, defined similarly as $f,g,h$, with $X$ replaced by $Y_{i}^{a_{i}}$. By Lemma \ref{autoproduct}, we have:

$$ G\,=\, \prod_{i}G_{i},\quad  \textnormal{Aut}(X)\,=\, \prod_{i}\textnormal{Aut}(Y_{i}^{a_{i}}),\quad f\,=\, (f_{i})_{i},\quad g\,=\, \prod_{i}g_{i},\quad  h\,=\, \prod_{i}h_{i}.$$ For $ m\,\geq\, 3$, let $h_{i}\,=\, f_{i}$, by convention.

Therefore, it suffices to show: $$\textnormal{N}_{G_{i}}(\textnormal{im}(f_{i}))\,=\, \textnormal{im}(g_{i}) \cdot \textnormal{im}(h_{i}) \quad \textrm{for all} \,\,i,$$ 
since Lemma \ref{product}, applied to $ K_{i}\,=\, \textnormal{im}(g_{i})$, $S\,=\,S_m$, and $ T_{i}\,=\, S_m$ if $m\,\geq\, 3,$ or $T_{i}\,=\,(\mathbb{Z}/2 \mathbb{Z})^{a_{i}} $ if $ m\,=\,2$, will then complete the proof.

Thus, we may assume without loss generality that $\ell\, =\,1$. Set:

$$a_{1}\,=\,a\,=\,k, \quad  Y_{1}\,=\,Y,\quad h_{1}\,=\,h, \quad f_{1}\,=\,f, \quad g_{1}\,=\,g.$$

By convention, $h\,=\,f$ if $ m\, \geq\, 3$. Recall that for any $m\,\geq\, 2$, the codomain of $h$ is $G$, and  the domain is $S_m$ if $m\,\geq\, 3$, and $(\mathbb{Z}/2 \mathbb{Z})^a$ if $m\,=\,2.$ It is easy to see that $ \textnormal{im}(h) \cdot \textnormal{im}(g)$ is a subgroup of $ \textnormal{N}_{G}(\textnormal{im}(f))$. 

To prove the reverse inclusion, let $ \Phi\,\in\, \textnormal{N}_{G}(\textnormal{im}(f)).$  We aim to show $ \Phi\, \in\, \textnormal{im}(h) \cdot \textnormal{im}(g).$ By Lemma \ref{autoproduct}, we have:
$$ G =(\textnormal{Aut}\, Y)^{ma}\, \rtimes\, S_{ma}.$$ 
So we can write $ \Phi\, =\, (\phi, \pi)$, where $ \phi\, \in\, (\textnormal{Aut}\,Y)^{ma}$, $ \pi\, \in\, S_{ma}.$ 

Note that $ \textnormal{im}(f),$ $ \textnormal{im}(h)\, \subseteq\, S_{ma} \,\subseteq\, (\textnormal{Aut}\, Y)^{ma} \,\rtimes\, S_{ma}\,=\,G$, and the corresponding maps $f,h$ with codomain $S_{ma}$ coincide with the maps $f,h$ defined before Lemma \ref{Sma}. Also, if $ g': S_{a} \hookrightarrow S_{ma}$ is the map defined before Lemma \ref{Sma} (which was there called $g$), then $\{ 1 \}\times \textnormal{im}(g') \subset \textnormal{im}(g).$ In fact, $\{ 1 \} \times \textnormal{im}(g^{\prime}) \subseteq \textnormal{im}(g)$ is the inclusion  $ S_{a} \subset (\textnormal{Aut} Y)^{a} \rtimes S_{a} = \textnormal{Aut}\,X.$ Since the projection
$$ G= (\textnormal{Aut}\,Y)^{ma} \rtimes S_{ma} \longrightarrow S_{ma}$$ is a group homomorphism, $ \pi$ normalizes $\textnormal{im}(f).$ By  Lemma  \ref{Sma},  $ \pi \in \textnormal{im}(h) \cdot \textnormal{im}(g^{\prime}).$

Since $\{1\}\times \textnormal{im}(g^{\prime})\subset \textnormal{im}(g)$, we see that $(1, \pi)\in \textnormal{im}(h)\cdot\textnormal{im}(g)$. Here, $\textnormal{im}(h)$ is regarded as a subgroup of $G$. So, composing $\Phi$ by $(1, \pi)^{-1}$, we may assume that $\pi = 1$.

Since $\Phi$ normalizes $\textnormal{im}(f)$, for each $\sigma\in S_{m},$ there is $\tau = \tau(\sigma)\in S_{m}$ such that 
\begin{equation}\label{e3}
\Phi f(\sigma) \Phi^{-1} = f(\tau)    
\end{equation}

Write elements of $\textnormal{Aut}(Y)^{ma}$ as a matrix $((\phi_{i,j}))_{\underset{1\leq j\leq a}{1\leq i\leq m}},$ and elements of $Y^{ma}$ as a matrix $((y_{i,j}))_{\underset{1\leq j\leq a}{1\leq i \leq m} }$. Here $\phi_{ij}\in \textnormal{Aut}(Y),$ $y_{ij}\in Y$. So, let $\phi = ((\phi_{ij}))$. Note that 
\begin{equation*}
\Phi f(\sigma)\Phi^{-1}((y_{ij})) = \Phi f(\sigma)  ((\phi_{i,j}^{-1}(y_{ij})))  = \Phi ((\phi^{-1}_{\sigma^{-1}(i), j}(y_{\sigma^{-1}(i), j})))
\end{equation*}
\begin{equation*}
 = ((\phi_{i,j}\circ \phi^{-1}_{\sigma^{-1}(i), j}(y_{\sigma^{-1}(i),j })))\,\,\textnormal{and}   
\end{equation*}
\begin{equation*}
f(\tau)((y_{i,j})) = ((y_{\tau^{-1}(i), j})). 
\end{equation*} 
So, \eqref{e3} gives 
\begin{equation}\label{e4}
\phi_{ij}\cdot \phi^{-1}_{\sigma^{-1}(i), j}(y_{\sigma^{-1}(i), j}) = y_{\tau^{-1}(i), j}\,\,\,\textnormal{for all} \medspace ((y_{ij})), i, j.    
\end{equation}
Clearly, \eqref{e4} forces $\tau = \sigma$ and $\phi_{i,j} = \phi_{\sigma^{-1}(i), j}\medspace$, for all  $(i, j)$. So, $\phi_{i,j} = \phi_{\sigma^{-1}(i), j}, \medspace$ for all $i,j \,\, \textnormal{and}\, \sigma\in S_{m} $. This implies there are $\psi_{j}\in \textnormal{Aut}(Y)$ for $1\leq j\leq a$, such that $\phi_{i,j} = \psi_{j}$, for all $i,j$. $\underline{\psi} = (\psi_{1}, \psi_{2}, \cdots, \psi_{a})$ gives an element of $(\textnormal{Aut}(Y))^{a}\subset \textnormal{Aut}(X)$, and we have $g(\underline{\psi}) = \Phi$. So, $\Phi \in \textnormal{im}(g)$. 
\end{proof}
Now let $X$ be a smooth projective variety of dimension $\geq 2$, and let $m\geq 2$ be an integer. Let $X^{m}\,\xrightarrow{\,\,\,p\,\,\,}S^{m}(X)$ be the projection. Suppose either $H^{1}(X, \mathcal{O}_{X})\, =\,  0$, or $K_X$ is ample. We can write $X = \prod_{i= 1}^{k}X_{i}$ for smooth projective varieties $X_{i},$ such that each $X_{i}$ is indecomposable. If $H^{1}(X, \mathcal{O}_{X}) = 0$, by Künneth theorem, $0 = H^{1}(X, \mathcal{O}_{X}) = \bigoplus_{i}H^{1}(X_{i}, \mathcal{O}_{X_{i}})$, hence $H^{1}(X_{i}, \mathcal{O}_{X_{i}}) = 0$, for all $i$. If $K_X$ is ample, then all $K_{X_i}$'s are ample as $K_X=\boxtimes_i K_{X_i}.$ So, we are in Setup 1, hence we have injective group homomorphisms $S_{m}\xlongrightarrow{f} G$, $\textnormal{Aut}(X)\xlongrightarrow{g} G$, and $(\mathbb{Z}/2 \mathbb{Z})^{k}\xlongrightarrow{h}G$ if $m = 2$, where $G = \textnormal{Aut}(X^{m})$. Note that if $m = 2$, for all $\underline{\varepsilon}\in (\mathbb{Z}/2 \mathbb{Z})^{k} $, $h(\underline{\varepsilon})$ descends to an automorphism $\overline{h}(\underline{\varepsilon})$ of $S^{2}(X)$. For example, if $m=2$ and $X=Y\times Z$ is the product of two indecomposable varieties, then $\overline{h}({\overline{1}})$ is the following automorphism of $S^2X$:
\begin{equation}\label{swap}
    (y_1, z_1)+(y_2, z_2)\mapsto (y_1, z_2) + (y_2, z_1).
\end{equation}
This gives a group homomorphism $(\mathbb{Z}/2 \mathbb{Z})^{k}\,\xrightarrow{\,\,\,\overline{h}\,\,\,}\, \textnormal{Aut}(S^{2}(X))$, with kernel the subgroup of $(\mathbb{Z}/2\mathbb{Z})^{k}$ generated by $(\overline{1},\cdots,\overline{1})$, hence isomorphic to ${\mathbb{Z}/2 \mathbb{Z}}$. So, $\textnormal{im}(\overline{h})\,\cong\, (\mathbb{Z}/2 \mathbb{Z})^{k-1}$ is a subgroup of $\textnormal{Aut}(S^{2}(X))$. Also $\textnormal{Aut}\, X$, identified as the subgroup of natural automorphisms of $S^2X$, normalizes $\textnormal{im}(\overline{h})$, and $\textnormal{Aut}\, X\cap\textnormal{im}(\overline{h}) = 1$. So, $\textnormal{im}(\overline{h})\cdot\textnormal{Aut}\, X$ is a subgroup of $\textnormal{Aut}(S^{2}(X))$ isomorphic to $(\mathbb{Z}/2 \mathbb{Z})^{k-1}\rtimes \textnormal{Aut}\, X$, let us call this subgroup $H$. 

Now we prove Theorem \ref{3} $(i)$.

\textit{Proof of Theorem \ref{3} $(i)$ :}
\begin{proof}
 For $m=2$ we shall prove the more precise statement that $\textnormal{Aut}(S^{2}(X)) = H$.

Let $\psi\in \textnormal{Aut}(S^{m}(X))$. By {\cite[Proposition 9 and 12]{belmans2020automorphisms}}, $\psi$ lifts to $ \phi \in \textnormal{Aut}(X^{m})$ 
(The proof of {\cite[Proposition 9 and 12]{belmans2020automorphisms}} is valid for every smooth projective variety $X$, not only surfaces.)  Let $G= \textnormal{Aut}(X^{m}),$ $$ S_{m}\xhookrightarrow{f} G, \quad \textnormal{Aut}X \xhookrightarrow{g}G$$ be the inclusion as in Lemma \ref{normalizer}. Note that for every $\sigma\in S_{m}$, $\phi f(\sigma)\phi^{-1}$ is an automorphism of $X^{m}$ over $S^m(X)$, hence $\phi f(\sigma)\phi^{-1}\in \textnormal{im}(f)$ (look at $\phi f(\sigma)\phi^{-1}$ over the smooth locus in $S^{m}(X)$, over which $F$ is a finite {\'e}tale Galois cover. So, $\phi f(\sigma)\phi^{-1}$ is a Deck transformation over that locus). So, $\phi\in \textnormal{N}_{G}\textnormal{(im($f$))}$. By Lemma \ref{normalizer},
we have $\phi\in\textnormal{im}(f)\cdot\textnormal{im}(g)$ if $m\geq 3$, and $\phi\in \text{im}(h)\cdot\textnormal{im}(g)$ if $m = 2$. This proves the Theorem.
\end{proof}

 \section{Automorphisms of symmetric power of surfaces}

We will prove Theorems \ref{4} and \ref{3}$(ii)$ and Corollary \ref{c1} in this section. We need the following lemmas.

\begin{lemma}\label{infinite}
Let $X$ be an abelian surface with $\emph{End}(X)^{\times}$ infinite. Then for each $m\,\geq\,2$, the symmetric power $S^{m}X$ admits a non-natural automorphism.
\end{lemma}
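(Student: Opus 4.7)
The plan is to construct, for each $\phi \in \textrm{End}(X)$ with $1+m\phi \in \textrm{End}(X)^{\times}$, an $S_m$-equivariant automorphism $F_{\phi}$ of $X^m$ whose descent $\widetilde{F}_{\phi}$ to $S^m X$ is non-natural for all but at most two exceptional values of $\phi$; the hypothesis that $\textrm{End}(X)^{\times}$ is infinite then produces the required $\phi$. Explicitly, I would set
\[
F_{\phi}\colon X^m\to X^m,\qquad F_{\phi}(x_1,\ldots,x_m)=(x_1+\phi(s),\,\ldots,\,x_m+\phi(s)),\qquad s:=\textstyle\sum_j x_j.
\]
Symmetry of $s$ in the $x_j$'s makes $F_{\phi}$ commute with the $S_m$-action, so it descends to $\widetilde{F}_{\phi}\colon S^m X\to S^m X$. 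A direct calculation gives the composition law $F_{\phi}\circ F_{\psi}=F_{\phi+\psi+m\phi\psi}$; equivalently, $F_{\phi}$ is the identity on $\ker(\Sigma\colon X^m\to X)$ and acts by $1+m\phi$ on the image of $\Sigma$. Hence $F_{\phi}$ is an automorphism of $X^m$, and $\widetilde{F}_{\phi}$ an automorphism of $S^m X$, precisely when $1+m\phi\in\textrm{End}(X)^{\times}$.

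For non-naturality, suppose $\widetilde{F}_{\phi}$ is induced by some $f(y)=u(y)+b\in\textrm{Aut}(X)=X\rtimes\textrm{End}(X)^{\times}$. Then $F_{\phi}$ and the diagonal lift $\widehat{f}\colon(x_i)\mapsto(u(x_i)+b)_i$ both lift $\widetilde{F}_{\phi}$ along the quotient $X^m\to S^m X$, so they differ by a Galois element: $F_{\phi}=\sigma\circ\widehat{f}$ for some $\sigma\in S_m$. Expanding produces the coordinatewise identity $u(x_{\sigma^{-1}(i)})+b=x_i+\phi(s)$, and specialising to vectors supported at a single index $k$ and splitting according to whether $\sigma$ fixes $k$ forces the following: if $\sigma$ has any fixed point then $\phi=0$, $b=0$, $u=1$, and $\sigma=\textrm{id}$; if $\sigma$ is a derangement then for $m\geq 3$ comparing distinct coordinates yields a contradiction, while for $m=2$ the unique derangement $(1\,2)$ gives $\phi=-1$, $u=-1$, $b=0$, in which case $F_{-1}(x_1,x_2)=(-x_2,-x_1)$ is indeed a natural automorphism. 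Hence $\widetilde{F}_{\phi}$ is non-natural whenever $\phi\neq 0$, and, for $m=2$, also $\phi\neq -1$.

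Finally, such a $\phi$ exists by hypothesis. The ring $\textrm{End}(X)/m\,\textrm{End}(X)$ is finite, so the reduction map $\textrm{End}(X)^{\times}\to(\textrm{End}(X)/m\,\textrm{End}(X))^{\times}$ has kernel $K$ of finite index in the infinite group $\textrm{End}(X)^{\times}$, so $K$ is infinite; I would pick $u\in K\setminus\{1\}$ (respectively $K\setminus\{1,-1\}$ if $m=2$) and set $\phi:=(u-1)/m\in\textrm{End}(X)$, which gives $1+m\phi=u\in\textrm{End}(X)^{\times}$ together with $\phi\notin\{0,-1\}$, and hence the required non-natural $\widetilde{F}_{\phi}\in\textrm{Aut}(S^m X)$. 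I expect the main technical obstacle to be the non-naturality case analysis, particularly isolating the exceptional pair $(m,\phi)=(2,-1)$ where the putative non-natural map collapses back to the natural $-1$ on $S^2 X$.
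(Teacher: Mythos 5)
Your proof is correct and is essentially the paper's argument: both constructions use the $S_m$-equivariant endomorphism of $X^m$ given by a matrix of the form $aI+bJ$ over $\textrm{End}(X)$ (the paper normalizes $a+(m-1)b=1$ so that the diagonal is fixed pointwise, you normalize $a-b=1$), and both extract the needed unit $\equiv 1 \pmod{m\,\textrm{End}(X)}$ from the finiteness of $\textrm{End}(X)/m\,\textrm{End}(X)$. The only real difference is cosmetic: fixing the diagonal lets the paper dispatch non-naturality in one line, while your case analysis over $\sigma\in S_m$ does the same work by hand and correctly isolates the exceptional case $(m,\phi)=(2,-1)$.
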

\begin{proof}
Since $\textrm{End}(X)$ is finitely generated as an abelian group and $\textrm{End}(X)^{\times}$ is an infinite, there exist $u_1, u_2\,\in\, \textrm{End}(X)^{\times}$, with $u_1\,\neq\, \pm u_2$, such that $m| (u_1-u_2)$ in $\textrm{End}(X)$. Letting $\alpha=u_1^{-1}u_2\,\neq\, \pm 1$, we get $m|(1\,-\,\alpha)$ in $\textrm{End}(X)$. 

The rest of the proof follows exactly as in \cite{Sas}. More explicitly, define
$$f\,=\, \frac{1+(m-1)\alpha}{m},\quad g\,=\, \frac{1-\alpha}{m}\,\in\, \textrm{End}(X).$$ 

Then, $f, g\neq 0.$ Consider the automorphism  $\Phi$ of $X^{m}$ given by the matrix 
\[
\begin{bmatrix}
f & g & \cdots & g \\
g & f & \cdots & \vdots \\
\vdots & \vdots & \ddots & \vdots \\
g & \cdots & g & f
\end{bmatrix}.
\]
This descends to a non-natural automorphism of $S^{m}X$. 
  
\end{proof}

\begin{lemma}\label{unit}
Let $X$ is an abelian surface such that $\emph{End}_{\mathbb{Q}} (X)$ commutative, and let $m\,\geq\,2$ be an integer. Then $S^{m}X$ admits a non-natural automorphism if and only if there exists  $\alpha$ with $\alpha\,\neq\, \pm 1$ in $\textrm{End}(X)^{\times}$, such that $m|(1-\alpha)$ in $\textrm{End}(X)$.  
\end{lemma}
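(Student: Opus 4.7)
The plan is to identify group automorphisms of $X^m$ with matrices in $M_m(\text{End}(X))$ and translate descent to $S^m X$ into an algebraic condition. After a sequence of natural-automorphism modifications, any non-natural descending matrix reduces to the centralizer form $M = fI + g(J-I)$, with $J$ the all-ones matrix, and the content of the lemma will boil down to the eigenvalue $\alpha = f - g$.

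For the forward direction, given $\alpha \in \text{End}(X)^{\times}$ with $\alpha \neq \pm 1$ and $m \mid (1-\alpha)$, I set $f = (1+(m-1)\alpha)/m$ and $g = (1-\alpha)/m \in \text{End}(X)$. Using $J^2 = mJ$ one checks that $M = fI + g(J-I)$ has eigenvalues $\beta := f+(m-1)g = 1$ and $\alpha$ (multiplicity $m-1$), and admits an explicit inverse $\alpha^{-1}I + bJ$ with $b \in \text{End}(X)$. Since $M$ commutes with every permutation matrix, it defines an $S_m$-equivariant automorphism $\Phi$ of $X^m$, descending to $\bar\Phi \in \text{Aut}(S^m X)$. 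Non-naturality follows because any natural lift must be a scalar matrix times a permutation matrix, and the only ways for $fI + g(J-I)$ to take that form are $g = 0$ (so $\alpha = 1$), or, when $m = 2$, $f = 0$ (so $\alpha = -1$); both are excluded by hypothesis.

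For the reverse direction, starting from a non-natural $\bar\Phi \in \text{Aut}(S^m X)$, I apply \cite[Propositions 9 and 12]{belmans2020automorphisms} to lift it to $\Phi \in \text{Aut}(X^m)$ and decompose $\Phi = t_a \circ M$, with $M$ a group automorphism. The descent condition $\Phi S_m \Phi^{-1} = S_m$ unfolds (computing in $X^m \rtimes \text{Aut}_{gp}(X^m)$) to: $M$ normalizes $S_m$ inside $\text{GL}_m(\text{End}(X))$, and $a$ is fixed by the corresponding image. Because conjugation by $M$ preserves characteristic polynomials of permutation matrices, the outer automorphism of $S_6$ (which sends transpositions to triple transpositions, changing the trace) is ruled out; so the induced automorphism of $S_m$ is inner for every $m \geq 2$. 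Absorbing a permutation into $M$ (a natural modification of the lift) yields an $M$ centralizing $S_m$, and commutativity of $\text{End}(X)$ then forces $M = fI + g(J-I)$. The fixed-point condition forces $a$ to be diagonal in $X^m$, making $t_a$ natural, so after one more natural modification I may assume $\Phi = M$.

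Invertibility of $M$ forces both $\alpha = f-g$ and $\beta = f+(m-1)g$ to be units, via $\det M = \beta \alpha^{m-1}$. Composing with the natural automorphism $\beta^{-1} I$ I normalize $\beta = 1$; then $g = (1-\alpha)/m$, and $g \in \text{End}(X)$ is equivalent to $m \mid (1-\alpha)$. Applying the naturality characterization used in the forward direction, $\bar\Phi$ is natural iff $\alpha = 1$ (and, when $m = 2$, also iff $\alpha = -1$; for $m \geq 3$ the case $\alpha = -1$ is automatically impossible since $m \nmid 2$), so non-naturality of $\bar\Phi$ is exactly $\alpha \neq \pm 1$. The main obstacle I expect is the bookkeeping around the translation part and the outer automorphism of $S_6$; once the centralizer reduction $M = fI + g(J-I)$ is set up, everything else is direct computation with eigenvalues and divisibility in $\text{End}(X)$.
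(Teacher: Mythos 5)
Your argument is correct and takes essentially the same route as the paper's proof: the forward direction uses the identical matrix $fI+g(J-I)$ with $f=(1+(m-1)\alpha)/m$, $g=(1-\alpha)/m$, and the converse lifts to $X^{m}$, reduces to a matrix of that centralizer form, and reads off $\alpha=f-g$ from the determinant $(f-g)^{m-1}(f+(m-1)g)$. The only difference is that you spell out details the paper delegates to \cite{Sas} — the splitting off of the translation part and the exclusion of the outer automorphism of $S_{6}$ via invariance of characteristic polynomials over the commutative ring $\textrm{End}(X)$ — which is a welcome elaboration rather than a new approach.
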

\begin{proof}
$(\Leftarrow)$ The same arguement as in Lemma \ref{infinite} applies.
  
$(\Rightarrow)$ Suppose $\phi\,\in\, \textrm{Aut}(S^{m}X)$ is non-natural. Then by \cite{belmans2020automorphisms}, $\phi$ lifts to $\Phi\,\in\, \textrm{Aut}(X^{m})$. Composing $\phi$ with a natural automorphism if necessary, we may assume that $\Phi|_{\Delta_X}\,=\,id.$ In particular, $\Phi(\underline{0})=\underline{0}$, so $\Phi\in \textrm{End}(X^m)^{\times}$.

As in \cite{Sas}, by precomposing $\phi$ and $\Phi$ with an element of $S_m$, we can write $\Phi(\underline{x})\,=\, P\underline{x}$ for some $m\,\times\,m$ matrix 
\[ P=
\begin{bmatrix}
f & g & \cdots & g \\
g & f & \cdots & \vdots \\
\vdots & \vdots & \ddots & \vdots \\
g & \cdots & g & f
\end{bmatrix},
\]
with $f, g\,\in\, \textrm{End}(X)$, $f, g\,\neq\,0$. Taking the determinant, we see that
$$(f-g)^{m-1}(f+(m-1)g)\,\in\, \textrm{End}(X)^\times.$$
Also, since $\Phi|_{\Delta_X}=id$, we have $f\,+\,(m\,-\,1)g\,=\,1.$ Let $\alpha=f-g\in \textrm{End}(X)^\times$. Now $f+(m-1)g=1$ shows   $mg\,=\, 1-\alpha.$ 
Since $f,g\,\neq\,0,$ we get $\alpha\,\neq\, \pm 1$.

\end{proof}

\begin{corollary}\label{nounit}
Let $E$ be an elliptic curve and suppose $\phi\,\in\, \emph{Aut}(E^m)$ satisfies $\phi|_{\Delta_E}\,=\,$ id. Then $\phi$ is a permutation of the factors.
\end{corollary}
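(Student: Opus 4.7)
The plan is to mirror the proof of Lemma \ref{unit}, now in dimension one. First, $\phi|_{\Delta_E} = \textnormal{id}$ implies $\phi(\underline{0}) = \underline{0}$, so $\phi$ is a homomorphism of the abelian variety $E^m$ and is given by a matrix $P \in GL_m(\textnormal{End}(E))$. Following the precomposition argument from Lemma \ref{unit} (which relies on $\phi$ normalizing the image of $S_m$ in $\textnormal{Aut}(E^m)$, the implicit setting since $\phi$ is to be applied in the context of automorphisms lifted from $S^mE$), after composing $\phi$ with a suitable element of $S_m$ we may assume that $P$ has some $f \in \textnormal{End}(E)$ on the diagonal and some $g \in \textnormal{End}(E)$ off the diagonal. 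The condition $\phi|_{\Delta_E} = \textnormal{id}$ then gives $f + (m-1)g = 1$, and invertibility of $P$ gives $\alpha := f - g \in \textnormal{End}(E)^{\times}$ (since $\det P = \alpha^{m-1}$). Combining the two yields $mg = 1 - \alpha$.

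The heart of the argument is a case analysis on the unit $\alpha$. Because $E$ is an elliptic curve, $\textnormal{End}(E)$ is either $\mathbb{Z}$ or an order in an imaginary quadratic field, so $\textnormal{End}(E)^{\times}$ is a finite group of roots of unity, contained in $\{\pm 1, \pm i, \pm \zeta_3, \pm \zeta_6\}$. If $\alpha = 1$, then $g = 0$ and $P = I$. If $\alpha = -1$, then $mg = 2$ forces $m = 2$, giving $P = \left(\begin{smallmatrix}0 & 1\\1 & 0\end{smallmatrix}\right)$, the transposition. For any other unit $\alpha$, a direct norm computation in the imaginary quadratic order shows $N(1 - \alpha) \in \{1, 3\}$; the requirement $m \mid (1 - \alpha)$ in $\textnormal{End}(E)$ would then force $m^{2} \mid N(1 - \alpha) \leq 3$, incompatible with $m \geq 2$. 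Hence $P$ is the identity or the transposition, and undoing the initial precomposition by an element of $S_m$, $\phi$ itself is a permutation of the factors.

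The main technical point is the norm computation, together with the subtlety that $\textnormal{End}(E)$ may be a non-maximal order. This subtlety is easily handled: for any non-maximal order in an imaginary quadratic field, the unit group is just $\{\pm 1\}$, since each of $i$, $\zeta_3$, $\zeta_6$ generates the maximal order of its field. Thus one is reduced to the narrow case already analyzed, where only $\alpha = \pm 1$ occurs. A secondary check is that the precomposition argument of \cite{Sas} adapts to dimension one; this is formal, as the argument only uses the normalization of the $S_m$-action.
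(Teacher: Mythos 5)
Your proof is correct and is essentially the paper's argument: the paper simply says ``same proof as the $(\Rightarrow)$ direction of Lemma \ref{unit}'' plus the fact that $\textrm{End}(E)^{\times}$ is one of $\{\pm 1\}$, $\{\pm 1,\pm i\}$, $\{\pm 1,\pm\omega,\pm\omega^{2}\}$, and you have (rightly) made explicit both the implicit hypothesis that $\phi$ normalizes the $S_m$-action --- which is how the corollary is actually used --- and the norm computation ruling out any unit $\alpha\neq\pm 1$ with $m\mid(1-\alpha)$. One small slip: $N(1-\alpha)=2$ when $\alpha=\pm i$, so the set of values is $\{1,2,3\}$ rather than $\{1,3\}$, but your bound $N(1-\alpha)\leq 3<m^{2}$ is unaffected.
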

\begin{proof}
Same proof as in the $(\Rightarrow)$ direction of Lemma \ref{unit}, noting that 
$\textrm{End}(E)^\times=\{\pm 1\}, \\
\{\pm 1,\, \pm i\}$ or $\{\pm 1,\, \pm \omega,\, \pm \omega^{2} \}$, where $\omega\,=\, e^{2\pi i/3}$. In all the cases, $\alpha$ as required in Lemma \ref{unit} does not exist.
\end{proof}
\begin{lemma}\label{albanese}
Let $Y$ be a minimal smooth projective surface of Kodaira dimension $1$. If the Albanese dimension $a(Y)\,=\,2$, then $Y$ contains no rational curve.    
\end{lemma}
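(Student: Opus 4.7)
The plan is to argue by contradiction. Suppose $Y$ contains a rational curve $R$. Let $f : Y \to C$ be the Iitaka fibration, which is an elliptic fibration since $\kappa(Y) = 1$, and let $\alpha : Y \to A := \textrm{Alb}(Y)$ be the Albanese morphism, which is generically finite with $2$-dimensional image since $a(Y) = 2$. Because $A$ contains no rational curve, $\alpha$ contracts $R$ to a point.

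First I would show $g(C) \geq 1$. The Leray spectral sequence for $f$, together with $f_*\mathcal{O}_Y = \mathcal{O}_C$ and Fujita semi-positivity of $f_*\omega_{Y/C}$ (equivalently, $\chi(\mathcal{O}_Y) \geq 0$ on a minimal surface of non-negative Kodaira dimension), implies that $R^1 f_*\mathcal{O}_Y$ is a line bundle on $C$ of non-positive degree, so $h^0(C, R^1 f_*\mathcal{O}_Y) \leq 1$ and hence $q(Y) \leq g(C) + 1$. Combined with $a(Y) \leq q(Y)$, this forces $g(C) \geq 1$. Consequently $R$ cannot dominate $C$, so $R$ must lie in some fibre $F_0$ of $f$.

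Next I would identify the shape of $F_0$ and the pushforward of a generic fibre. Smooth elliptic fibres and their multiples $mI_0$ contain no rational curve, so $F_0$ must be a Kodaira singular fibre of one of the types $I_n$ ($n \geq 1$), $II$, $III$, $IV$, $I_n^*$, $II^*$, $III^*$ or $IV^*$; in each of these types every irreducible component is a smooth rational curve. Let $F$ be a general smooth fibre of $f$. If $\alpha|_F$ were constant then $\alpha$ would factor through $f$, giving $\dim \alpha(Y) \leq 1$ and contradicting $a(Y) = 2$. Hence $\alpha|_F$ is a non-constant morphism from an elliptic curve to an abelian variety, so (after translation) it is an isogeny onto a $1$-dimensional abelian subvariety $E_0 \subset A$ of some positive degree $d$. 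Therefore the $1$-cycle $\alpha_*F$ has class $d\,[E_0]$, which is non-zero in $H_2(A, \mathbb{Z})/\textrm{torsion}$.

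Finally, writing $F_0 = \sum_i n_i C_i$ with each $C_i$ a rational curve, each restriction $\alpha|_{C_i} : \mathbb{P}^1 \to A$ is constant and so $\alpha_*C_i = 0$, giving $\alpha_*F_0 = 0$. But $F$ and $F_0$ are fibres of $f$ and are therefore algebraically equivalent as divisors on $Y$, so $\alpha_*F$ and $\alpha_*F_0$ are numerically equivalent $1$-cycles on $A$; this forces $0 \equiv d \cdot [E_0]$, contradicting $[E_0] \neq 0$. The only slightly delicate input is the non-positivity of $\deg(R^1 f_*\mathcal{O}_Y)$ in the first step, which follows from standard semi-positivity results for minimal elliptic surfaces; the rest is a direct cycle-class computation.
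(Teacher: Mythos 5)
Your proof is correct, and its skeleton coincides with the paper's: the rational curve must lie in a fibre of the Iitaka fibration, that fibre is then (by Kodaira's classification) a union of rational curves and hence contracted by the Albanese map, and numerical/algebraic equivalence of fibres then contradicts $a(Y)=2$ because the general fibre is not contracted. The one genuine divergence is how you place the rational curve inside a fibre: the paper invokes Lu's theorem to get $K_Y\cdot R=0$ and concludes from the canonical bundle formula that $R$ is vertical, whereas you instead show $g(C)\geq 1$ via the Leray sequence $q(Y)=g(C)+h^{0}(C,R^{1}f_{*}\mathcal{O}_{Y})$ together with $\deg R^{1}f_{*}\mathcal{O}_{Y}=-\chi(\mathcal{O}_{Y})\leq 0$ (no wild fibres in characteristic zero), so that a rational curve cannot dominate the base. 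Your route is more self-contained, trading an external citation for a standard irregularity computation; the paper's route is shorter on the page. The closing cycle-theoretic contradiction ($\alpha_{*}F_{0}=0$ versus $\alpha_{*}F\neq 0$ for algebraically equivalent fibres) is just the contrapositive phrasing of the paper's ``all fibres are contracted, so $a(Y)\leq 1$'', and both are sound.
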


\begin{proof}
Let $g\,:\,Y\,\xlongrightarrow\,A$ be the Albanese map, and let $p\,:\,Y\,\longrightarrow\,B$ be the Iitaka fibration. Suppose $C\,\subset\, Y$ is a rational curve. By \cite[Theorem 1.1]{Lu}, we have $K_{Y}\cdot C\,=\, 0$. Hence, $p(C)\,=\, b_{0}$, a point in $B$. 

By Kodaira's classification of singular fibres of the minimal elliptic fibration $p$, the fiber $p^{-1}(b_{0})$ is a chain of rational curves. Thus $g(p^{-1}(b_{0}))$ is a point. For each $b\,\in\,B$, the fiber $p^{-1}(b)$ is numerically equivalent to $p^{-1}(b_{0})$, so $g(p^{-1}(b))$ is also a point. This contradicts $a(Y)\,=\,2$.
\end{proof}

\begin{lemma}\label{equivalence}
Let $X$ be a smooth projective surface, and let  $C$ and $F$ be smooth projective curves. Suppose $X\,\xlongrightarrow{p}\,C$ is an isotrivial contraction with general fibre $F$, and  $H^1(X, \mathcal{O}_X)\,\neq \,0$. Assume further that $X$ is not an abelian surface. Then the following are equivalent:
\begin{enumerate}
\item[(a)] There is a non-constant map $C\,\longrightarrow\, \textrm{Aut}_{\,C}X$. 
\item[(b)] The surface $X$ lies in class $\mathcal{C}$, the map $p$ is the Iitaka fibration of $X$, and there is a non-constant map $C\,\longrightarrow\, F$.
\end{enumerate}
\end{lemma}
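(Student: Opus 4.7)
The plan is to prove the two implications separately, with $(b) \Rightarrow (a)$ being essentially immediate and $(a) \Rightarrow (b)$ requiring the substantive work.

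For $(b) \Rightarrow (a)$, suppose $X = (E \times Y)/G$ lies in class $\mathcal{C}$ with $G \subset E$ acting diagonally. Translations of $E$ commute with the $G$-action (since $G$ also acts by translation on $E$) and therefore descend to give an embedding $E \hookrightarrow \textrm{Aut}_{C} X$. Composing this embedding with the hypothesized non-constant map $f : C \to E$ produces a non-constant morphism $C \to \textrm{Aut}_{C} X$.

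For $(a) \Rightarrow (b)$, the strategy is to analyze $\textrm{Aut}^{\circ}_{C} X$. A non-constant morphism $\alpha : C \to \textrm{Aut}_{C} X$ has connected image lying in a single component, and after translation into the identity component we see that $\textrm{Aut}^{\circ}_{C} X$ is positive-dimensional. Since any automorphism of $X$ over $C$ acting as the identity on the general fibre must itself be the identity, $\textrm{Aut}^{\circ}_{C} X$ injects into $\textrm{Aut}^{\circ}(F)$; hence $F$ has positive-dimensional identity component of its automorphism group, forcing $F = \mathbb{P}^{1}$ or $F$ elliptic. The case $F = \mathbb{P}^{1}$ is excluded because then $\textrm{Aut}^{\circ}_{C} X$ is contained in a form of $\textrm{PGL}_{2}$, hence affine, and no non-constant morphism from the projective curve $C$ can land in an affine algebraic group.

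So $F = E$ is elliptic and $p$ is an isotrivial elliptic fibration. Invoking the standard structure theorem, I write $X = (E \times \widetilde{C})/G$ with $G$ finite, acting diagonally and freely, via $G \to \textrm{Aut}(E) = E \rtimes \textrm{Aut}_{0}(E)$; let $H$ denote the image in $\textrm{Aut}_{0}(E)$. Elements of $\textrm{Aut}^{\circ}_{C} X$ act fibrewise as translations and correspond to $G$-equivariant morphisms $\tau : \widetilde{C} \to E$ (with $G$ acting on $E$ through $H$). In any one-parameter deformation $\tau_{t} = \tau_{0} + \beta(t)$ (using the abelian-variety structure of $E$), $G$-equivariance forces $\beta(t) \in E^{H}$; if $H \neq 1$ then $E^{H}$ is finite, which would make $\textrm{Aut}^{\circ}_{C} X$ zero-dimensional, contradicting the previous paragraph. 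Hence $H = 1$ and $G \subset E$. The case $\widetilde{C} = \mathbb{P}^{1}$ is then ruled out (it forces $G = 1$ and $X = E \times \mathbb{P}^{1}$, but $\textrm{Mor}(\mathbb{P}^{1}, E)$ consists only of constants, contradicting $(a)$), and $\widetilde{C}$ elliptic is ruled out by the hypothesis that $X$ is not abelian. Therefore $\widetilde{C}$ is of general type, placing $X$ in class $\mathcal{C}$; consequently $\kappa(X) = 1$ and $p$ is the unique elliptic, hence Iitaka, fibration. Finally, since in class $\mathcal{C}$ the translation subscheme of $\textrm{Aut}_{C} X$ is parametrised by $\textrm{Mor}(C, E)$, the non-constancy of $\alpha$ directly supplies a non-constant morphism $C \to E = F$.

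The main obstacle will be the deformation argument in the $H = 1$ step: it requires a scheme-theoretic analysis of $\textrm{Aut}^{\circ}_{C} X$, identifying its translation subscheme with (a subscheme of) the scheme of $G$-equivariant morphisms $\widetilde{C} \to E$, and exploiting the rigidity of morphisms from $\widetilde{C} \times \mathbb{A}^{1}$ into the abelian variety $E$ to reduce an arbitrary one-parameter deformation to a shift by an element of $E^{H}$. A secondary point requiring care is the invocation of the structural description $X = (E \times \widetilde{C})/G$ for an arbitrary isotrivial elliptic fibration with smooth total space, but this is standard.
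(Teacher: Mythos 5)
Your $(b)\Rightarrow(a)$ direction is fine, and the broad strategy for $(a)\Rightarrow(b)$ (force $F$ elliptic, realize $X$ as a diagonal quotient, use rigidity of maps into $E$ to kill the linear part $H$) is workable and genuinely different from the paper, which instead quotes Fong's classification of surfaces with non-trivial $\textrm{Aut}^{\circ}$. However, your case analysis has a real gap: the step ``$\widetilde{C}$ elliptic is ruled out by the hypothesis that $X$ is not abelian'' is false. When $H=1$ the group $G$ acts on $E$ by translations, but it may act on the elliptic curve $\widetilde{C}$ with non-trivial linear part; then $X=(E\times\widetilde{C})/G$ is a bielliptic surface, which is not abelian, is not in class $\mathcal{C}$, and satisfies every hypothesis of the lemma for the fibration $p\colon X\to\widetilde{C}/G$ with fibre $E$. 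This case must be excluded differently: there $C=\widetilde{C}/G\cong\mathbb{P}^1$, so a component of $\textrm{Aut}_{C}X$ is a torsor under an elliptic curve and admits no non-constant map from $C$ --- the same argument you use for $\widetilde{C}=\mathbb{P}^1$. (Incidentally, in that $\mathbb{P}^1$ case your parenthetical ``forces $G=1$'' is also wrong, since $G$ can act non-trivially on $\mathbb{P}^1$ and by torsion translation on $E$; but your contradiction via $\textrm{Mor}(\mathbb{P}^1,E)$ survives because $C=\mathbb{P}^1$ regardless.) The paper's route handles exactly this by listing hyperelliptic surfaces as a separate case of Fong's theorem and observing $C=\mathbb{P}^1$ there.

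A second, smaller gap: the ``standard structure theorem'' $X=(E\times\widetilde{C})/G$ with $G$ acting freely is not true for an arbitrary smooth isotrivial elliptic fibration. Constant-$j$ fibrations can have additive fibres (e.g.\ the relatively minimal model of $y^2=x^3+t$), and the lemma does not even assume $p$ relatively minimal, so reducible fibres with rational components can occur; in general $X$ is only \emph{birational} to such a quotient (Serrano). The description does hold in your situation, but only after using the positive-dimensionality of $\textrm{Aut}^{\circ}_{C}X$ to exclude fibres with rational components: a connected group containing an elliptic curve must fix such a fibre pointwise, hence acts trivially on tangent spaces along it, forcing its torsion --- and then the whole elliptic curve --- to act trivially on $X$, a contradiction. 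You should supply an argument of this kind before invoking the quotient description rather than dismissing it as standard.
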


\begin{proof}
It is clear that
$(a)$ is equivalent to the following

$(a)^{\prime}:$ There is a non-constant map $C\,\longrightarrow\,\textrm{Aut}_{C}^0X$.

$\underline{(a)^{\prime}\,\Rightarrow\,(b):}$ If $F\,=\, \mathbb{P}^{1}$, then $\textrm{Aut}^{0}_{C}X$ has no positive dimensional complete subvariety, see \cite{Mar}. So, $F\,\not \cong \, \mathbb{P}^{1}$. Since $\textrm{Aut}^{0}X$ is nontrivial, a result in \cite{Fon} implies that $X$ is either a hyperelliptic surface or $X$ lies in class $\mathcal{C}$. In both cases, $\textrm{Aut}^{0}X$ is an elliptic curve, so we must have $\textrm{Aut}^{0}_{C}X \,=\,\textrm{Aut}^{0}X$, as $\textrm{Aut}^{0}_{C}X$ is nontrivial. If $X$ is hyperelliptic surface, as $\textrm{Aut}^{0}X$ preserves the fibres of $p$, $p$ must be a contraction to $\mathbb{P}^1$. So, $C\,=\,\mathbb{P}^1$, but then $(a)$ is impossible. So, $X$ is in class $\mathcal{C}$. As $\textrm{Aut}^{0}X$ preserves the fibres of $p$, we see that $p$ must be the Iitaka fibration of $X$, $\textrm{Aut}^{0}_{C}(X)$ is an elliptic curve isogeneous to $F$. So, $(b)$ holds.

$\underline{(b)\,\Rightarrow\,(a)^{\prime}:}$ By \cite{Fon}, $\textrm{Aut}^{0}_{C}(X)$ is an elliptic curve isogenous to $F$. So $(a)^{\prime}$ holds.
\end{proof}

Now we prove Theorem \ref{4}. Let us first show that in cases $(1)-(4)$ there is indeed non-natural automorphism.

 \begin{enumerate}
     \item Equation \ref{swap} gives a non-natural automorphism. 
     \item Note that $\textrm{GL}(2, \mathbb{Z})\,\subset\, \textrm{End}(E^{2})^{\times},$ so $\textrm{End}(E^{2})^{\times}$ is infinite. So, $\textrm{End}(X)^{\times}$ is infinite, and we are done by Lemma \ref{infinite}.
     \item By Lemma \ref{infinite}, it suffices to show that $\textrm{End}(X)^\times$ is infinite. Let $K=\textrm{End}_{\mathbb{Q}}(X).$ If $K$ is not a field, then $K$ is an indefinite quaternion divison algebra over $\mathbb{Q}$, so $\textrm{End}(X)^\times$  is infinite by \cite[Appendix]{GV}. If $K$ is a totally real field or $[K:\mathbb{Q}]\,>\, 2$, then $\mathcal{O}^{\times}_{K}$, hence $\textrm{End}(X)^\times$ is infinite by Dirichlet's unit theorem.

     \item Let $p:X\to C$ be the Iitaka fibration of $X$. By Lemma \ref{equivalence}, we have a non-constant map $C\,\longrightarrow\, \textrm{Aut}^{0}_{C}X$. As $\textrm{Aut}^{0}_{C}X$ is an elliptic curve by \cite{Fon}, we get a surjective map $\textrm{Jac}(C)\,\longrightarrow\, \textrm{Aut}^{0}_{C}X$ of abelian varieties. We have the Abel-Jacobi map $S^{m-1}C\,\longrightarrow\, \textrm{Jac}(C)$ whose image generates $\textrm{Jac}(C)$ as an abelian variety. So, the image of the composition $S^{m-1}C\,\longrightarrow\, \textrm{Aut}^{0}_{C}X$ generates $\textrm{Aut}^{0}_{C}X$ as an abelian variety; in particular, it is non-constant. Denote this composition by $\alpha\,\longmapsto\, \psi_{\alpha}^{\prime}$, where $\alpha\,\in\, S^{m-1}C$, $\psi_{\alpha}^{\prime}\,\in\,\textrm{Aut}^{0}_{C}X$. So, $p\,\psi^{\prime}_{\alpha}(x)\,=\, p(x)$ for all $x\,\in\,X$.
     
     Define $T\,:\,X\,\longrightarrow\, X$ by $T(x)\,=\, \psi^{\prime}_{(m-1)p(x)}(x)$. We have $p\,T(x)\,=\, p(x)$ for all $x\,\in\,X$, and for each $c\,\in C$, $T_{|_{X_{c}}}\,:\, X_{c}\,\longrightarrow\, X_{c}$ is same as $\psi^{\prime}_{(m-1)c}$, hence an isomorphism. So, $T\,\in\, \textrm{Aut}_{C}X$. Now define a map 
\begin{equation*}
S^{m-1}C\,\longrightarrow\, \textrm{Aut}_{C}X    
\end{equation*}
given by 
\begin{equation*}
\alpha\,\longmapsto\,  \psi_{\alpha}\,:=\, (\psi^{\prime}_{\alpha})^{-1}\,\circ\, T.   
\end{equation*}
Clearly, this map is non-constant, and $\psi_{(m-1)p(x)}(x)\,=\,x$. Now define $\phi\,:\, X^{m}\,\longrightarrow\, X^{m}$ by 

\begin{equation*}
\phi((x_{i})_{i})\,=\, \left(\psi_{{\sum_{j\neq i}p(x_{j})}}(x_{i})\right)_{i}. 
\end{equation*}
So, $\phi_{|_{\Delta_{X}}}\,=\, {id}$. Note that $\phi$ is a morphism over $C^{m}$. Over $\underline{c}\,\in\, C^{m}$, $\phi\,:\, \Pi_{i} X_{c_{i}}\,\longrightarrow\, \Pi_{i}X_{c_{i}}$ is just the product map $
\prod_i \left( \left. \psi_{{\sum}_{j \ne i} c_j} \right|_{X_{c_i}} \right)$, which is an isomorphism. So, $\phi$ is an automorphism of $X^{m}$. As $\phi$ is $S_{m}$-equivariant, it induces an automorphism $\overline{\phi}$ of $S^{m}X$. As $\left. \phi \right|_{\Delta_{X}}\,=\, {id}$, if $\overline{\phi}$ were natural then $\overline{\phi}$ must be identity. So, $\phi$ has to be a permutation of coordinates. It forces $\psi_{\alpha}\,=\, {id}$ for all $\alpha$. But this is a contradiction as $\alpha\,\longmapsto\, \psi_{\alpha}$ is non-constant. So, $\overline{\phi}$ is non-natural.

 \end{enumerate}

 Now we proceed to show the converse. Assume $(1)-(4)$ do not hold. We need to show that every automorphism of $S^m X$ is natural. This will follow from the following theorems  \ref{isotrivial}, \ref{non-isotrivial} and \ref{nonminimal}, as follows.  If the Albanese dimension $a(X)=0$, then $H^1(X, \mathcal{O}_X)=0$, so we are done by Theorem \ref{3} $(i)$. So suppose $a(X)\neq 0.$ If $a(X)=1$, then letting $p$ be the contraction part of the Stein factorization of the Albanese map of $X$, we are done by Theorems \ref{isotrivial} and \ref{non-isotrivial}. So assume $a(X)=2$. 
 
 By \cite{belmans2020automorphisms}, we may assume $X$ is not of general type. If $X$ is a simple abelian surface then $\textrm{End}_{\mathbb{Q}}(X)$ is  $\mathbb{Q}$ or an imaginary quadratic extension of $\mathbb{Q}$, since $(3)$ does not hold. By Lemma \ref{unit} every automorphism of $S^m X$ is natural, noting that $\textrm{End}(X)^\times=\{\pm 1\}, \{\pm 1,\, \pm i\}$ or $\{\pm 1,\, \pm \omega,\, \pm \omega^{2} \}$, where $\omega\,=\, e^{2\pi i/3}$. So assume $X$ is not a simple abelian surface. If $X$ is an abelian surface isogenous to $E_1\times E_2$, where $E_1$ and $ E_2$ are non-isogenous elliptic curves, then $X$ has a smooth isotrivial elliptic fibration $p$ over an elliptic curve such that the base and the fibre are not isogenous, so hypothesis $(i)$ of Theorem \ref{isotrivial} is satisfied. So we are done by Theorem \ref{isotrivial}. As $(2)$ does not hold, we can assume that $X$ is not an abelian surface. If $X$ is non-minimal and birational to an abelian surface we are done by Theorem \ref{nonminimal}. So we can assume $X$ is not birational to an abelian surface.

  So, $X$ is not of general type, $a(X)=2$, and $X$ is not birational to an abelian surface. By Enriques-Kodaira classification of surfaces, we see that the Kodaira dimension $\kappa(X)=1.$ If $X$ is minimal, letting $p$ to be the Iitaka fibration of $X$ we are done by Theorems \ref{isotrivial} and \ref{non-isotrivial}. If $X$ is not minimal, let $Y$ be the minimal model of $X$. So, $\kappa(Y)=1$ and $a(Y)=2$. By Lemma \ref{albanese}, $Y$ has no rational curve. So we are done by Theorem \ref{nonminimal}.

\begin{theorem}\label{isotrivial}
Let $X$ be a smooth projective surface, $C$, $F$ smooth projective curves. Let $X\,\xlongrightarrow{p}\,C$ be an isotrivial contraction with general fibre $F$. Assume that one of the following holds:
\begin{enumerate}
    \item[$(i)$] Every map $F\,\longrightarrow\, C$ is constant,
    \item [$(ii)$]$p$ is the contraction part of the Stein factorization of the Albanese map of $X$,
    \item[$(iii)$] $X$ is minimal and $p$ is the Iitaka fibration of $X$.
\end{enumerate}  Also assume that none of $(1)-(4)$ in Theorem \ref{4} holds.  Then every automorphism of $S^{m}X$ is natural for $m\,\geq\,2$. 
\end{theorem}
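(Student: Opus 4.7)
The plan is to follow the standard lift–descend–and–analyze–vertical–part strategy: lift $\phi\in\textrm{Aut}(S^m X)$ to $\Phi\in\textrm{Aut}(X^m)$, descend $\Phi$ through the product Hilbert–Chow-type morphism $p_{(m)}\colon X^m\to C^m$ to an automorphism of $S^m C$, use the known theory of symmetric powers of curves to see that this descent is natural, and finally analyze the remaining vertical automorphism using Lemma \ref{equivalence}.

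First, I would lift $\phi$ to $\Phi\in\textrm{Aut}(X^m)$ via \cite[Proposition 9 and 12]{belmans2020automorphisms}. I would then descend $\Phi$ through $p_{(m)}$ under each of the hypotheses separately. Under (i), for fixed $(x_2,\dots,x_m)$ the composition of $x_1\mapsto\Phi(x_1,\dots,x_m)$ with the $j$-th projection and $p$ restricts on each fibre of $p$ to a map $F\to C$, which is constant by assumption, so $\Phi$ descends in the first coordinate, and $S_m$-equivariance yields descent in every coordinate. Under (ii), $\Phi$ preserves the Albanese map of $X^m$ up to translation, and since $p$ is the Stein contraction of the Albanese map of $X$, $\Phi$ permutes the connected fibres of $p_{(m)}$. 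Under (iii), $p_{(m)}$ is the Iitaka fibration of $X^m$ and is therefore preserved by $\Phi$.

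Once $\Phi$ descends to $\overline{\Phi}\in\textrm{Aut}(C^m)$, $S_m$-equivariance yields $\overline{\phi}\in\textrm{Aut}(S^m C)$. I would invoke the classical classification of automorphisms of symmetric products of curves (\cite{martens1965extended}, \cite{ciliberto1993symmetric}, \cite{biswas2017automorphisms}) to conclude that $\overline{\phi}$ is natural, ruling out the exceptional curves ($\mathbb{P}^1$, certain elliptic and hyperelliptic curves) using the hypotheses on $p$ together with the failure of (1)--(4). After composing $\Phi$ with the natural lift of the inverse of the induced $\tau\in\textrm{Aut}(C)$, I may assume $\overline{\Phi}=\textrm{id}$, so $\Phi\in\textrm{Aut}_{C^m}(X^m)$.

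The main obstacle, and the step I expect to require the most care, is to show that any $S_m$-equivariant $\Phi\in\textrm{Aut}_{C^m}(X^m)$ is natural. The idea is that, modulo the discrete quotient $\textrm{Aut}_C X/\textrm{Aut}^0_C X$ (which is constant on the connected $C^{m-1}$), fixing $(c_2,\dots,c_m)$ and letting $c_1$ vary produces a morphism $C^{m-1}\to\textrm{Aut}^0_C X$. In case (iii), after reducing to $H^1(X,\mathcal{O}_X)\neq 0$ (otherwise Theorem \ref{3}(i) already concludes), Lemma \ref{equivalence} together with the failure of (4) forces every morphism $C\to\textrm{Aut}^0_C X$ to be constant; since $\textrm{Aut}^0_C X$ is an abelian variety and $C$ generates $\textrm{Jac}(C)$ via Abel--Jacobi, every morphism $C^{m-1}\to\textrm{Aut}^0_C X$ is then constant, so $\Phi$ is a constant family of fibrewise automorphisms and hence natural. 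Cases (i) and (ii) would be reduced to the same vanishing by checking, via the Enriques--Kodaira classification and the exclusion of (1)--(3), that the hypotheses of Lemma \ref{equivalence} are satisfied so that it applies directly, or else that $\textrm{Aut}^0_C X$ is trivial (for instance for surfaces that are neither abelian nor in class $\mathcal{C}$ and whose fibration is isotrivial with rigid general fibre).
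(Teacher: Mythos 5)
Your overall skeleton (lift to $X^m$, descend to $C^m$, reduce to a vertical automorphism over $C^m$, then kill the vertical part via Lemma \ref{equivalence} and the failure of (4)) is the same as the paper's, but two steps have genuine gaps, one of which is the heart of the matter.

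First, the descent to the base. You propose to pass to $\textrm{Aut}(S^mC)$ and invoke the classification of automorphisms of symmetric products of curves, ``ruling out the exceptional curves.'' But the exceptional bases genuinely occur: under hypothesis (iii) the Iitaka fibration can have base $C=\mathbb{P}^1$, and $S^m\mathbb{P}^1=\mathbb{P}^m$ has automorphism group $\mathrm{PGL}_{m+1}$, almost none of which is natural; elliptic bases occur as well. So ``conclude that $\overline{\phi}$ is natural'' is false as stated, and the subsequent step of composing $\Phi$ with ``the natural lift of the inverse of $\tau\in\textrm{Aut}(C)$'' presupposes that $\tau$ lifts to an automorphism of $X$, which is unjustified. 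This gap is repairable, and the paper's fix is to normalize first: since the lift $\Phi$ normalizes the $S_m$-action it preserves the small diagonal $\Delta_X$, so after composing with a natural automorphism of $X$ one may assume $\Phi|_{\Delta_X}=id$; the descended automorphism of $C^m$ then fixes $\Delta_C$ pointwise and normalizes $S_m$, and the structure of $\textrm{Aut}(C^m)$ (Lemma \ref{autoproduct} for $g(C)\neq 1$, Corollary \ref{nounit} for $g(C)=1$) forces it to be a pure permutation. Working at the level of $C^m$ with the diagonal condition is essential; working at the level of $S^mC$ via the classification is not enough.

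Second, and more seriously, your ``main obstacle'' paragraph assumes exactly what needs to be proved. From a vertical $S_m$-normalizing $\Phi\in\textrm{Aut}_{C^m}(X^m)$ you immediately extract ``a morphism $C^{m-1}\to\textrm{Aut}^0_CX$,'' which presupposes that the $i$-th component $\Phi_i$ depends on the coordinates $x_j$ ($j\neq i$) only through $p(x_j)$. A priori $\Phi_i$ restricted to $X_{c_1}\times\cdots\times X_{c_m}\to X_{c_i}$ can mix the fibre directions: when the fibre is elliptic the components can involve nontrivial endomorphisms of $F^m$, and for $m=2$ the swap automorphism of $S^2(F\times C)$, namely $(z_1,c_1)+(z_2,c_2)\mapsto (z_1,c_2)+(z_2,c_1)$, is a vertical $S_2$-equivariant automorphism whose components do \emph{not} factor in this way. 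Establishing the factorization is the paper's Step 3 (the ``Claim'' that $\Phi$ descends to $C^{i-1}\times X\times C^{m-i}$), which requires base-changing to a trivializing cover $D\to C$, the explicit analysis of $\textrm{Aut}_{D^m}((F\times D)^m)$ in Lemma \ref{open} using $\mathcal{Z}(S_m)=1$ and Corollary \ref{nounit}, the descent Lemmas \ref{family}, \ref{autodescent}, \ref{autodescent2}, and crucially the exclusion of case (1) of Theorem \ref{4} to rule out the swap-type exception (which forces $X\cong F\times C$). Your proposal contains no argument for this step, so the final appeal to Lemma \ref{equivalence} does not get off the ground as written.
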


\begin{theorem}\label{non-isotrivial}
Let $X$ be a smooth projective surface, $C$ a smooth projective curve, $X\,\xlongrightarrow{p}\,C$ a non-isotrivial contraction. Assume either $(ii)$ or $(iii)$ of Theorem \ref{isotrivial} holds. Then for each $m\,\geq\,2$, every automorphism of $S^{m}X$ is natural.
\end{theorem}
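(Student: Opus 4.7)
The plan is to use the canonical nature of the fibration $p$ to descend any automorphism from $S^{m}X$ down to $S^{m}C$, and then to exploit non-isotriviality as a rigidity input that forces everything to be natural.

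First, I would lift $\psi\in\textrm{Aut}(S^{m}X)$ to $\phi\in\textrm{Aut}(X^{m})$ normalizing $S_{m}$, by invoking {\cite[Propositions 9 and 12]{belmans2020automorphisms}}. The key observation is that $p_{(m)}:X^{m}\to C^{m}$ is canonically determined by $X^{m}$: in case $(ii)$ it is, up to a finite map, the Stein factorization of the Albanese map $X^{m}\to\textrm{Alb}(X)^{m}$, while in case $(iii)$ it is the Iitaka fibration of $X^{m}$, since $\kappa(X^{m})=m$ and pluricanonical sections split as tensor products by K\"unneth. Either way $\phi$ preserves $p_{(m)}$ and descends to $\bar{\phi}\in\textrm{Aut}(C^{m})$, which, by the $S_{m}$-equivariance of $p_{(m)}$, normalizes $S_{m}\subset\textrm{Aut}(C^{m})$.

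Next I would use non-isotriviality to show that $\bar{\phi}$ respects the product structure of $C^{m}$ up to permutation. Since the moduli map $\mu:C\to\mathcal{M}$ classifying the fibers of $p$ is non-constant, it is generically finite, and for a generic $(c_{1},\ldots,c_{m})$ the fibers $X_{c_{1}},\ldots,X_{c_{m}}$ are pairwise non-isomorphic indecomposable varieties. Since $\phi$ restricts to an isomorphism $\prod_{i}X_{c_{i}}\cong\prod_{i}X_{\bar{\phi}(c)_{i}}$, a Krull--Schmidt type uniqueness (in the spirit of Lemma \ref{autoproduct}) forces the multisets $\{X_{c_{i}}\}$ and $\{X_{\bar{\phi}(c)_{i}}\}$ to coincide. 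Together with continuity of $\bar{\phi}$ and finiteness of $\mu^{-1}(\mu(c))$, this implies, after composing with some $\sigma\in S_{m}$, that $\bar{\phi}=(\phi_{1},\ldots,\phi_{m})$ is a product with each $\phi_{i}\in\textrm{Aut}(C)$. The fact that $\phi$ normalizes $S_{m}$ translates into the relation $\phi_{i}=\phi_{\tau^{-1}(i)}$ for every $\tau\in S_{m}$, so all the $\phi_{i}$'s coincide with a single $\varphi\in\textrm{Aut}(C)$ and $\phi$ itself becomes $S_{m}$-equivariant.

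For the final step, since $\phi$ is $S_{m}$-equivariant it preserves the diagonal $\Delta_{X}\subset X^{m}$, so its restriction $\alpha:=\phi|_{\Delta_{X}}$ is an automorphism of $X$ covering $\varphi$. Replacing $\phi$ by $(\alpha^{-1},\ldots,\alpha^{-1})\circ\phi$, we reduce to the case $\bar{\phi}=\textrm{id}_{C^{m}}$ and $\phi|_{\Delta_{X}}=\textrm{id}$, and it remains to show $\phi=\textrm{id}$. Over a generic $c\in C^m$, $S_{m}$-equivariance together with the pairwise non-isomorphism of the fibers forces $\phi$ to restrict to a diagonal family of elements $\beta_{c}\in\textrm{Aut}_{C}X$ that are trivial along the diagonal. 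The hard step will be ruling out any non-trivial $\beta_{c}$: this is precisely where non-isotriviality is used a second time, since $X\notin\mathcal{C}$ (class $\mathcal{C}$ has isotrivial Iitaka fibration, contradicting our hypothesis in case $(iii)$; in case $(ii)$ a parallel argument applies), so Lemma \ref{equivalence} forbids non-constant morphisms $C\to\textrm{Aut}^{0}_{C}X$, and combined with the diagonal condition $\beta_{c}|_{\Delta_{X}}=\textrm{id}$ this rigidity yields $\phi=\textrm{id}$, whence $\psi$ is natural.
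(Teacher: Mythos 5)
Your overall skeleton (lift to $X^{m}$, descend to $C^{m}$, normalize to $\phi|_{\Delta_X}=\mathrm{id}$ over $C^{m}$, then use non-isotriviality as the rigidity input) matches the paper's, and your descent step is fine --- indeed the paper handles it by quoting Steps 1 and 2 of Theorem \ref{isotrivial}, which use Lemma \ref{autoproduct} and Corollary \ref{nounit} on $C^{m}$ directly rather than your moduli-map argument; either route works. The problem is your final step, which is exactly where the theorem's content lives. You cannot invoke Lemma \ref{equivalence} here: its hypothesis is that $p$ is \emph{isotrivial}, so it says nothing in the present setting, and more importantly the objects you need to control are not morphisms $C\to \textnormal{Aut}_{C}X$. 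After restricting $\phi$ to a fibre $\prod_i X_{c_i}$ and splitting it as $\prod_i\beta_{\underline{c},i}$, each $\beta_{\underline{c},i}$ is an automorphism of the single curve $X_{c_i}$ depending on \emph{all} of $\underline{c}$; what must be ruled out is a non-constant family $C^{m-1}\to \textnormal{Aut}(X_{c_1})$ (equivalently, a non-constant map $X^{m-1}\to X_{p(x_1)}$). For fibres of genus $\geq 2$ this is automatic, but in case $(iii)$ the general fibre is \emph{elliptic}, $\textnormal{Aut}(X_{c_1})$ contains the positive-dimensional translation group, and your argument gives no reason for constancy. The paper closes this gap with a countability argument: by non-isotriviality there are uncountably many isomorphism classes of fibres, while by \cite{Mae} only countably many curves receive a non-constant map from $X^{m-1}$; hence for very general $x_1$ the restriction $\phi_1|_{\{x_1\}\times X^{m-1}}$ is constant, and the rigidity lemma plus $\phi|_{\Delta_X}=\mathrm{id}$ then force $\phi_1(\underline{x})=x_1$. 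Some such de Franchis/Maehara-type input is indispensable and is absent from your proposal.

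Two smaller points. First, your Krull--Schmidt step needs ``very general'' rather than ``generic'': Lemma \ref{autoproduct} does not apply to elliptic curves ($K$ not ample, $H^{1}(\mathcal{O})\neq 0$), and products of pairwise non-isomorphic but isogenous elliptic curves can decompose non-uniquely; you must take the $c_i$ pairwise non-isogenous, which is only a very general condition, and then still extend the conclusion from a very general fibre to all of $X^{m}$ (the paper does this via the rigidity lemma). Second, ``$\phi$ normalizes $S_m$'' gives $\phi(\Delta_X)=\Delta_X$ (the diagonal is the fixed locus of $S_m$), which is all you need, but it does not by itself give $S_m$-equivariance (the $m=6$ outer automorphism is the standard obstruction), so that phrase should be weakened.
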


\begin{theorem}\label{nonminimal}
Let $X$ be a non-minimal smooth projective surface with $k(X)\,\geq\,0$, and let $Y$ be its minimal model. Assume that $Y$ contains no rational curve. Then for each $m\,\geq\, 2$, any automorphism of $S^{m}X$ is natural.     
\end{theorem}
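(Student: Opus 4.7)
The plan is to lift $\phi$ to an automorphism $\Phi$ of $X^m$, descend it along $X^m\to Y^m$, and then exploit the blow-up geometry to show that the descended automorphism is diagonal. First, by \cite[Proposition 9]{belmans2020automorphisms}, $\phi$ lifts to some $\Phi\in \textnormal{Aut}(X^m)$ normalizing the $S_m$-action. Let $f\colon X\to Y$ be the minimal model morphism and $f_{(m)}\colon X^m\to Y^m$ the induced map. Every fibre of $f_{(m)}$ is a product of points and trees of smooth rational curves, hence rationally chain connected; since $Y$ contains no rational curve, any composition of such a fibre with a coordinate projection of $Y^m$ into $Y$ must be constant. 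Therefore $\Phi$ descends to an automorphism $\tilde{\Phi}\in \textnormal{Aut}(Y^m)$ normalizing the $S_m$-action, inducing $\tilde{\phi}\in \textnormal{Aut}(S^m Y)$.

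Next I would rigidify $\tilde{\Phi}$ using the exceptional configuration of $f_{(m)}$. Let $B\subset Y$ denote the finite set of (possibly infinitely near) blow-up centres, and set $D_{i,b}\,=\,Y^{i-1}\times\{b\}\times Y^{m-i}$ for $i\in [m]$, $b\in B$. The loci $\{D_{i,b}\}$ are the irreducible components of the non-isomorphism locus of $f_{(m)}$, and $\tilde{\Phi}$ permutes them. Using the intersection pattern (disjointness for fixed $i$ as $b$ varies, and controlled codimension for different $i$'s) together with the $S_m$-normalizing equation $\tilde{\Phi}\pi\tilde{\Phi}^{-1}\,=\,\pi'$, after composing $\tilde{\Phi}$ with a suitable element of $S_m$ one finds that $\tilde{\Phi}$ commutes with $S_m$ and satisfies $\tilde{\Phi}(D_{i,b})\,=\,D_{i,g(b)}$ for a single permutation $g\in \textnormal{Sym}(B)$ independent of $i$.

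The main obstacle will be the third step: to deduce from these constraints that $\tilde{\Phi}$ has the diagonal form $(y_1,\ldots,y_m)\mapsto (\sigma(y_1),\ldots,\sigma(y_m))$ for some $\sigma\in \textnormal{Aut}(Y)$ with $\sigma|_B\,=\,g$. The strategy splits according to whether $h^0(Y,T_Y)\,=\,0$. In the rigid case---covering K3 surfaces, Enriques surfaces, non-isotrivial minimal elliptic surfaces, and surfaces of general type with no rational curve---every automorphism of $Y$ is an isolated point of $\textnormal{Mor}(Y,Y)$, so the morphism $Y^{m-1}\to \textnormal{Mor}(Y,Y)$ assigning to $(a_k)_{k\neq i}$ the slice morphism $y_i\mapsto \pi_i\tilde{\Phi}(\ldots,y_i,\ldots)$ lands in the connected component of an isolated automorphism whenever evaluated at $B^{m-1}\neq \varnothing$, and must therefore be constant by connectedness of $Y^{m-1}$; commutativity with $S_m$ then makes the resulting $\sigma$ independent of $i$. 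In the non-rigid case---namely $Y$ abelian, hyperelliptic, or in class $\mathcal{C}$, which are the only surfaces with no rational curve and $h^0(Y,T_Y)\neq 0$---I would use the explicit affine description of morphisms $Y^m\to Y$ coming from the Albanese (or Iitaka) fibration, and the constraint $\tilde{\Phi}(D_{i,b})\,=\,D_{i,g(b)}$ will force all off-diagonal coefficients to vanish.

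Finally, since $\tilde{\Phi}$ arises by descent from $\Phi$, which preserves each stratum of the exceptional divisor of $f_{(m)}$, the automorphism $\sigma$ preserves $B$ together with its infinitely near structure. Therefore $\sigma$ lifts uniquely to $\tilde{\sigma}\in \textnormal{Aut}(X)$ with $f\circ\tilde{\sigma}\,=\,\sigma\circ f$. The natural automorphism of $S^m X$ induced by $\tilde{\sigma}$ agrees with $\phi$ on the dense open subset of $S^m X$ where $S^m X\to S^m Y$ is an isomorphism, and hence on all of $S^m X$ by normality. Thus $\phi$ is natural, as required.
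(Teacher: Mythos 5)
Your first two steps coincide with the paper's proof: lift $\phi$ to $\Phi\in\textnormal{Aut}(X^m)$, descend along $f_{(m)}\colon X^m\to Y^m$ using rational chain connectedness of the fibres and the absence of rational curves in $Y^m$, and then use the intersection pattern of the loci $F_{i,j}=\pi_i^{-1}(z_j)$ (your $D_{i,b}$) --- disjoint for fixed $i$, pairwise meeting for distinct $i$ --- to reduce, after composing with a permutation, to the case where the descended automorphism $\Psi$ satisfies $\Psi(F_{i,j})=F_{i,j'}$ with the index $i$ preserved. The divergence, and the place where your proposal is not yet a proof, is the diagonalization step. The paper disposes of it in one line: since $\Psi_i:=\pi_i\circ\Psi$ contracts the fibre $\pi_i^{-1}(z_j)$ of the projection $\pi_i\colon Y^m\to Y$ to a point, Mumford's rigidity lemma (\cite[Page 40]{Mum}) forces $\Psi_i$ to factor through $\pi_i$, so $\Psi$ is diagonal --- uniformly in $Y$, with no case analysis. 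You instead split on $h^0(Y,T_Y)$. Your rigid case is workable, though you should justify why the slice map at a point of $B^{m-1}$ is an automorphism (it is, because $\tilde\Phi$ carries $\bigcap_{j\neq i}D_{j,b_j}\cong Y$ isomorphically onto $\bigcap_{j\neq i}D_{j,g(b_j)}\cong Y$); but your non-rigid case is only an assertion. For $Y$ abelian the affine description of $\textnormal{Mor}(Y^m,Y)$ does kill the off-diagonal terms, but for hyperelliptic surfaces and surfaces in class $\mathcal{C}$ you have not written down what the components of $\tilde\Phi$ look like, and establishing that would require genuine additional work. As written, this branch is a gap; the rigidity lemma is the missing idea that makes it unnecessary.

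A second, smaller difference: the paper normalizes $\Phi|_{\Delta_X}=id$ at the outset by composing with a natural automorphism, so that once $\Psi$ is diagonal and trivial on $\Delta_Y$ one concludes $\Psi=id$ and hence $\Phi=id$ immediately. This avoids your final step of lifting $\sigma\in\textnormal{Aut}(Y)$ back to $X$ and comparing with $\phi$ on a dense open set, which is correct but superfluous.
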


\textit{Proof of Theorem \ref{nonminimal}:}
\begin{proof}

Let $\phi\,\in\, \textrm{Aut}(S^{m}X)$.  By \cite{belmans2020automorphisms}, $\phi$ lifts to an automorphism $\Phi$ of $X^{m}$. By composing $\phi$ with a natural automorphism, we may assume that $\Phi|_{\Delta_X}= id$.

We have birational contraction $X\,\xlongrightarrow{f}\,Y$. Let $F\,=\,\{z_{1},\cdots,z_{r}\}\,\subset\,Y$ denote the fundamental locus of $f$, which is a finite nonempty set since $X$ is not minimal. We have induced birational contraction 
\begin{equation*}
X^{m}\,\xlongrightarrow{f_{(m)}}\, Y^{m},
\end{equation*}
whose fundamental locus $F_{m}$ has irreducible components 
\begin{equation*}
F_{i,j} \,=\, \pi_{i}^{-1}(z_{j}),\quad 1\,\leq\,i\,\,\leq m,  \quad 1\,\leq\,j\, \leq r, 
\end{equation*}
where $\pi_{i}\,:\, Y^{m}\,\longrightarrow\, Y$ denotes the projection onto the $i$-th factor. 

Since the fibres of $f_{(m)}$ are rationally chain-connected and $Y^m$ has no rational curve,  $\Phi$ descends to an automorphism $\Psi$ of $Y^{m}$. As $\Phi|_{\Delta_X}= id$, we have $\Psi|_{\Delta_Y}= id$. Clearly, $\Psi(F_{m})\,=\, F_{m}$. Note also
\begin{equation*}
F_{i,j}\,\cap\, F_{i,j^{\prime}}\,=\,\emptyset \,\quad\textrm{for}\quad j\,\neq\,j^{\prime}, \,\,\textrm{and}\,\,F_{i,j}\,\cap\, F_{i^{\prime}, j^{\prime}}\,\neq\, \emptyset\,\, \textrm{whenever}\,\, i\,\neq\, i^{\prime}.
\end{equation*}

Hence, there exists a permutation $\sigma\,\in\, S_{m}$ such that for all $i$, we have $\{\Psi(F_{i,j})\}_{j}\,=\,\{F_{\sigma(i),j}\}_{j}$.

So, by precomposing $\Phi$ and $\Psi$ with some element of $S_m$, we may assume
\begin{equation*}
\{\Psi(F_{i,j})\}_{j}\,=\, \{F_{i,j}\}_{j}\quad \textrm{for each}\,\, i.
\end{equation*}
Let $\pi_i: Y^{m}\,\longrightarrow\, Y$ be the projections, and $\Psi_{i}\,=\, \pi_{i}\,\circ\, \Psi$  be the components of $\Psi$. We have $\Psi_{i}(\pi^{-1}_{i}(z_{j}))$ is a point, so by rigidity lemma (\cite[Page 40]{Mum}) $\Psi_{i}$ factors through $\pi_{i}$. So, there are $\psi_{i}: Y\longrightarrow\, Y$ for $1\,\leq i\,\leq m$ such that $\Psi(\underline{y})\,=\,(\psi_{1}(y_{1}),\ldots, \psi_{m}(y_{m}))$ for all $\underline{y}\in Y^m$. As $\Psi|_{\Delta_Y}=$ id, we see that each $\psi_i$ is the identity map. So, $\Psi= id$. Hence $\Phi= id$ too.
\end{proof}

Before proving the other two theorems, we need a few lemmas.

\begin{lemma}\label{open}
Let $C, F$ be smooth curves, $F$ (but not necessarily $C$) projective. Let $X\,=\, F\,\times\, C$, $X\,\xlongrightarrow{p}\,C$ the projection. Let $m\,\geq\, 2$, $\phi$ an automorphism of $X^{m}$, over $C^{m}$, such that $\phi$ descends to $S^{m}X$ and $\phi{|_{\Delta_{X}}}\,=\, id$. For $1\,\leq\,i\,\leq\, m$, let $W_{i}\,=\, C^{i-1}\,\times\, X\,\times\, C^{m-i}$, $\pi=p_{(i-1)}\times id \times p_{(m-i)}: X\,\to \, W_{i}$. Then either $\phi$ descends to an automorphism of $W_{i}$ for all $i$, or $m\,=\,2$ and $\phi_{1}$ factors through $X\,\times\, X\,\xlongrightarrow{p\times id}\, C\,\times\,X$, $\phi_{2}$ factors through $X\,\times\, X\,\xlongrightarrow{id\times p}\, X\,\times\, C$.
\end{lemma}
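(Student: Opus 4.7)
The plan is to analyze $\phi$ fibrewise over $C^m$. Since $\phi$ is an automorphism over $C^m$, for each $\underline{c}\in C^m$ it restricts to an automorphism $\phi_{\underline{c}}\colon F^m_{\underline{c}}\to F^m_{\underline{c}}$. First I would upgrade the descent to $S^m X$ into genuine $S_m$-equivariance of $\phi$: a priori $\phi\circ\sigma=\tau(\sigma)\circ\phi$ for some $\tau(\sigma)\in S_m$, but applying $p_{(m)}$ and using that $\phi$ is over $C^m$ gives $\sigma\circ p_{(m)}=\tau(\sigma)\circ p_{(m)}$, forcing $\tau(\sigma)=\sigma$. Writing $\phi_i=(\phi_i^F,\phi_i^C)$ with $\phi_i^C(\underline{x})=c_i$, the statement ``$\phi$ descends to $W_j$'' translates exactly to the assertion that $\phi_j^F(\underline{x})$ depends on the $F$-coordinates only through $y_j$; the exceptional $m=2$ alternative says $\phi_1^F$ depends only on $y_2$ and $\underline{c}$, and symmetrically $\phi_2^F$ only on $y_1$ and $\underline{c}$.

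The heart of the proof is a classification of the possible $\phi_{\underline{c}}$, which I would split into three cases according to $g(F)$. When $g(F)\geq 2$, the group $\textrm{Aut}(F^m)=\textrm{Aut}(F)^m\rtimes S_m$ is discrete, so $\phi_{\underline{c}}$ is constant on the connected variety $C^m$; writing $\phi_{\underline{c}}=(\alpha_1,\ldots,\alpha_m)\cdot\sigma$, the $S_m$-equivariance forces $\sigma\in\mathcal{Z}(S_m)$ and all $\alpha_i$ equal, while $\phi|_{\Delta_X}=id$ then forces each $\alpha_i=id$. When $F=\mathbb{P}^1$, $\textrm{Aut}((\mathbb{P}^1)^m)=\textrm{PGL}_2^m\rtimes S_m$, and the same discreteness argument pins the permutation component to a constant $\sigma\in\mathcal{Z}(S_m)$, with the $\textrm{PGL}_2$-factors varying with $\underline{c}$ subject only to the diagonal identity condition. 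When $F$ is elliptic, every morphism $F^m\to F$ has the form $\sum_j\beta_j y_j+t$ by rigidity of morphisms of abelian varieties, so $\phi_{\underline{c}}$ is encoded by a matrix $B(\underline{c})\in\textrm{GL}_m(\textrm{End}(F))$ together with a translation; since the Hom scheme between abelian varieties is \'etale, $B(\underline{c})$ must be constant, and $S_m$-equivariance forces $B=\alpha I+\beta(J-I)$ with $\alpha+(m-1)\beta=1$. Setting $u=\alpha-\beta\in\textrm{End}(F)^\times$, the relation $m\beta=1-u$ becomes a congruence $u\equiv 1\pmod{m}$ in $\textrm{End}(F)$; since $\textrm{End}(F)^\times$ is contained in $\{\pm 1\}$, $\{\pm 1,\pm i\}$, or $\{\pm 1,\pm\omega,\pm\omega^2\}$, a short norm check shows that only $u=1$ works for $m\geq 3$, while for $m=2$ the extra solution $u=-1$ is admissible, giving $\alpha=0$, $\beta=1$.

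Assembling these, $\sigma=id$ (and $B=I$ in the elliptic case) whenever $m\geq 3$, and for $m=2$ the only other possibility is $\sigma=(12)$ (respectively, $B$ equal to the permutation matrix for the transposition). In the first situation, $\phi_j^F(\underline{x})$ depends only on $y_j$ and $\underline{c}$, which is exactly the condition for $\phi$ to descend to an automorphism of $W_j$ via $\pi_j$ for all $j$. In the second, occurring only when $m=2$, $\phi_1^F$ depends only on $(y_2,\underline{c})$, so $\phi_1$ factors through $p\times id\colon X\times X\to C\times X$, and symmetrically $\phi_2$ factors through $id\times p\colon X\times X\to X\times C$. The main obstacle I anticipate is the elliptic case, specifically verifying that no unit $u\in\textrm{End}(F)^\times$ other than $u=1$ satisfies $u\equiv 1\pmod{m}$ for $m\geq 3$; this reduces to a finite norm computation across the handful of possible unit groups of orders in $\mathbb{Q}$, $\mathbb{Q}(i)$, and $\mathbb{Q}(\omega)$, and for $u\neq \pm 1$ one quickly sees $|1-u|^2\in\{1,2,3\}$ which prevents divisibility by $m\geq 2$.
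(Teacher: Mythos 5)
Your proposal is correct and follows essentially the same strategy as the paper's proof: view $\phi$ as a $C^m$-family of automorphisms of $F^m$, use the structure of $\mathrm{Aut}(F^m)$ (the semidirect product $\mathrm{Aut}(F)^m\rtimes S_m$ for $g(F)\neq 1$, and translations plus $\mathrm{End}(E^m)^{\times}$ in the elliptic case) together with discreteness, $S_m$-equivariance, and $\phi|_{\Delta_X}=id$ to force the permutation (resp.\ matrix) part into $\mathcal{Z}(S_m)$, yielding the identity for $m\geq 3$ and the transposition alternative for $m=2$. Your explicit norm check $|1-u|^2\in\{1,2,3\}$ in the elliptic case is exactly the content of the paper's Corollary \ref{nounit}, so the two arguments coincide in substance.
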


\begin{proof}
We have the commutative diagram:

\begin{center}
\begin{tikzcd}
 (F\,\times\, C)^{m}  \arrow[rr, "\phi"] \arrow[swap]{dr}{p_{(m)}} &  _{}  & (F\,\times\,C)^{m}\arrow{dl}{p_{(m)}} \\[5pt]
 & C^{m}
\end{tikzcd}.
\end{center}
So, we get a morphism 
\begin{equation}\label{map to aut}
C^{m}\,\xlongrightarrow{T}\, \textrm{Aut}\,F^{m}.
\end{equation}

We will write elements of $X$ as $(z,c)$, with $z\in F$, $c\in C.$
We deal with the two cases separately.

\underline{Case\,1}: $F\,=\, \mathbb{P}^{1}$ or $g(F)\geq 2$.

By Lemma \ref{autoproduct}, $\textrm{Aut}(F^{m})\,=\, (\textrm{Aut}F)^{m} \rtimes\, S_{m}$. As, $S_{m}$ is discrete, there is $\sigma\,\in\, S_{m}$, $\tau_{i,\underline{c}}\,\in\, \textrm{Aut}(F)$ for $\underline{c}\,\in\, C^{m}$, $1\,\leq\,i\,\leq\,m$ such that
\begin{equation*}
T(\underline{c})(\underline{z})\,=\, (\tau_{i,\underline{c}}(z_{\sigma(i)}))_{i},    
\end{equation*} for all $\underline{z}\in F^m.$
So, 
\begin{equation}\label{horr p1}
\phi((z_{i},c_{i})_{i})\,=\, (\tau_{i,\underline{c}}(z_{\sigma(i)}), c_{i})_{i}. 
\end{equation}
If $\sigma\,=\, 1$, then $\phi$ descends to $W_{1}\xlongrightarrow{\psi_{1}}\,W_{1}$ given by 

\begin{equation*}
\psi_{1}((z_{1},c_{1}), c_{2},\ldots, c_{m}) \,=\, \left((\tau_{1,\underline{c}}(z_{1}), c_{1}),c_{2},\ldots,c_{m}\right)   
\end{equation*}
for $z_{1}\,\in\,F$, $\underline{c}\,\in\,C^{m}$. Similarly, $\phi$ descends to each $W_{i}$. 

So, assume $\sigma\,\neq\,1$. As $\phi$ descends to $S^{m}X$, for every $\pi\,\in\,S_{m}$, there is $\pi^{\prime}\,\in\,S_{m}$ such that $\phi(\underline{x}_{\pi})\,=\, \phi(\underline{x})_{\pi^{\prime}}$. By \eqref{horr p1}, if $x_{i}\,=\, (z_{i},c_{i})\,\in\,F\,\times\, C,$
\begin{equation*}
\phi(\underline{x}_{\pi})\,=\, (\tau_{i, \underline{c}_{\pi}}(z_{\pi\,\sigma(i)}), c_{\pi(i)})_{i}     
\end{equation*}
and
\begin{equation*}
\phi(\underline{x})_{\pi^{\prime}}\,=\, (\tau_{\pi^{\prime}(i),\underline{c}}(z_{\sigma \pi^{\prime}{(i)}}), c_{\pi^{\prime}(i)})_i  
\end{equation*}
for all $i,\underline{c}, \underline{z}$. This forces $\pi\,=\, \pi^{\prime}$. So, 
\begin{equation}\label{terr p1}
\tau_{i, \underline{c}_{\pi}}(z_{\pi\sigma{(i)}})\,=\, \tau_{\pi(i),\underline{c}}(z_{\sigma\pi(i)})  
\end{equation}
for all $\pi\,\in\, S_{m}$, $\underline{c}\,\in\, C^{m}, \underline{z}\,\in\,F^{m}$, $1\,\leq\, i\,\leq\, m$. If $m\,\geq\,3$, then since $\mathcal{Z}(S_{m})\,=\, 1$, and $\sigma\,\neq\, 1$, we can choose $\pi\,\in\, S_{m}$ such that $\pi\,\sigma\,\neq\, \sigma\,\pi$. Choose $i$ such that $\pi\,\sigma(i)\,\neq\,\sigma\,\pi(i)$. As $\eqref{terr p1}$ is true for all $\underline{z}$, we see that $\tau_{i,\underline{c}_{\pi}}$, $\tau_{\pi(i),\underline{c}}$ has to be constant function. But they are in $\textrm{Aut}\,F$, a contraction. 

So, $m\,=\,2$ and $\sigma$ is the transposition. Define

\begin{equation*}
\psi_{1}\,:\, C\,\times \,X\,\longrightarrow\, X    
\end{equation*}
\begin{equation*}
(c_{1},(z_{2}, c_{2}))\,\longmapsto\, (\tau_{1,\underline{c}}(z_{2}),c_{1})    
\end{equation*}
\begin{equation*}
\psi_{2}\,:\, X\,\times\,C\,\longrightarrow\, X    
\end{equation*}
\begin{equation*}
((z_{1},c_{1}), c_{2})\,\longmapsto\, (\tau_{2,\underline{c}}(z_{1}), c_{2}).    
\end{equation*}
Using $\eqref{horr p1}$, one sees that $\phi_{i}$ factors through $\psi_{i}$.

\underline{Case\,2}: $F\,=\, E$, an elliptic curve.

We have $\textrm{Aut}(E^{m})\,=\, E^{m}\,\rtimes\,\textrm{End}E^{m}$. As $\textrm{End}(E^{m})$ is discrete, there is $P\,\in\, \textrm{End}(E^{m})$ and  $\underline{a}_{\underline{c}}\,\in\, E^{m}$ for $\underline{c}\,\in\, C^{m}$ such that 
\begin{equation*}
T(\underline{c})(\underline{z})\,=\, P(\underline{z})\,+\, \underline{a}_{\underline{c}}.  
\end{equation*}
Let $P_{i}\,:\,E^{m}\,\longrightarrow\, E$ be the components of $P$, $a_{\underline{c},i}\,\in\, E$ the components of $\underline{a}_{\underline{c}},$ $1\,\leq\,i\,\leq\,m$. So, 

\begin{equation}\label{horr E}
\phi((z_{i},c_{i})_{i})\,=\, (P_{i}(\underline{z})+a_{\underline{c},i},c_{i})_{i}.    
\end{equation}
If $P\,=\,{id}$, then $\phi$ descends to $W_{1}\,\xlongrightarrow{\psi_{1}}\,W_{1}$ given by $\psi_{1}((z_{1},c_{1}), c_{2},\ldots,c_{m})\,=\, ((z_{1}+a_{\underline{c},1},c_{1}),c_{2},\ldots,c_{m})$, for $z_{1}\,\in\, E, \underline{c}\,\in\, C^{m}$. Similarly, $\phi$ descends to each $W_{i}$. 

So, assume $P\,\neq\,{id}$. As $\phi$ descends to $S^{m}X$, for every $\sigma\,\in\, S_{m}$, there is $\pi\,\in\,S_{m}$ such that 
\begin{equation*}
\phi(\underline{x}_{\sigma})\,=\, \phi(\underline{x})_{\pi} \quad \textrm{for all}\,\, \underline{x}\,\in\, X^{m}.
\end{equation*}
By \eqref{horr E}, if $x_{i}\,=\, (z_{i},c_{i})\,\in\, E\,\times\,C$,
\begin{equation*}
\phi(\underline{x}_{\sigma})\,=\, (P_{i}(\underline{z}_{\sigma})\,+\, a_{\underline{c}_{\sigma},i}, c_{\sigma(i)})_{i},    
\end{equation*}
and
\begin{equation*}
\phi(\underline{x})_{\pi}\,=\, (P_{\pi(i)}(\underline{z})\,+\, a_{\underline{c},\pi(i)}, c_{\pi(i)} )_{i}.   
\end{equation*}
So, we have
\begin{equation*}
(P_{i}(\underline{z}_{\sigma})\,+\,a_{\underline{c}_{\sigma},i}, c_{\sigma(i)})\,=\,(P_{\pi(i)}(\underline{z})\,+\, a_{\underline{c}, \pi(i)}, c_{\pi(i)})    
\end{equation*}
for all $i, \underline{c}, \underline{z}$. This forces $\sigma\,=\, \pi$. So, 

\begin{equation*}
P_{i}(\underline{z}_{\sigma})\,+\, a_{\underline{c}_{\sigma},i}\,=\, P_{\sigma(i)}(\underline{z})\,+\, a_{\underline{c}, \sigma(i)} 
\end{equation*}
for all $\sigma\,\in\, S_{m}$, $\underline{c}\,\in\, C^{m}, \underline{z}\,\in\, E^{m}$, $1\,\leq\,i\,\leq\, m$. This forces
$P_{i}(\underline{z}_{\sigma})\,=\, P_{\sigma(i)}(\underline{z})$. Hence, $P\,:\, E^{m}\,\longrightarrow\, E^{m}$ is $S_{m}$-equivariant, and $P|_{\Delta_{E}}\,=\,{id}$ as $\phi{|_{\Delta_{E}}}\,=\,{id}$. By Corollary \ref{nounit}, there is $\tau\,\in\, S_{m}$ such that $P(\underline{z})\,=\, \underline{z}_{\tau}$.

As $P$ is $S_{m}$-equivariant, we get $\tau\,\in\, \mathcal{Z}(S_{m})$. As $P\,\neq\, {id}$, we get $\mathcal{Z}(S_{m})\neq 1$. So $m\,=\,2$ and $\tau$ is the transposition. Define

\begin{equation*}
\psi_{1}\,:\, C\,\times\, X\,\longrightarrow\, X    
\end{equation*}
\begin{equation*}
(c_{1}, (z_{2}, c_{2}))\,\mapsto \, (z_{2}\,+\, a_{\underline{c},1},c_{1})    
\end{equation*}

\begin{equation*}
\psi_{2}\,:\, X\,\times\, C\,\longrightarrow\, X    
\end{equation*}

\begin{equation*}
((z_{1}, c_{1}), c_{2})\,\mapsto\,(z_{1}\,+\, a_{\underline{c},2}, c_{2}).    
\end{equation*}
Using $\eqref{horr E},$ one sees that $\phi_{i}$ factors through $\psi_{i}$.
\end{proof}
\begin{lemma}\label{family}
Let $Z$ be a finite type $\mathbb{C}$-scheme, $B$ a variety, $Z\,\xlongrightarrow{f}\,B$ a flat proper map. Suppose $L$ is a globally generated line bundle on $Z$, $B_{0}\,\subset\, B$ a dense open set. If $\left. L \right|_{f^{-1}(b)}$ is trivial for all $b\,\in\, B_{0}$, then the same is true for all $b\,\in\,B$.
\end{lemma}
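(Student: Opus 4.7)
The plan is to reduce to the case when $B$ is a smooth curve and then settle that case directly. For the reduction, given $b_{0}\in B$, I choose a smooth irreducible curve $C$ with a morphism $g : C\to B$ whose image contains $b_{0}$ and meets $B_{0}$; such a curve exists because $B_{0}$ is dense open in the variety $B$ (take, for instance, the normalization of a general irreducible curve in $B$ through $b_{0}$). After base change, $Z\times_{B}C\to C$ is flat and proper, the pullback of $L$ is globally generated, and it is trivial on fibers over the dense open $g^{-1}(B_{0})\subseteq C$. If the lemma holds on smooth curves, the pullback is then trivial on the fiber over any preimage of $b_{0}$ in $C$, and that fiber is canonically identified with $f^{-1}(b_{0})$, giving $L|_{f^{-1}(b_{0})}$ trivial.

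For the curve case, let $\phi : Z\to\mathbb{P}^{N}$ be the morphism defined by finitely many generating global sections of $L$, so $L\cong\phi^{*}\mathcal{O}(1)$, and consider the proper morphism $\Phi:=(\phi,f): Z\to\mathbb{P}^{N}\times B$ with scheme-theoretic (closed) image $Z'$ and projection $\pi : Z'\to B$. The key observation is that, since $f^{-1}(b)$ is proper, $L|_{f^{-1}(b)}$ is trivial if and only if $\phi(f^{-1}(b))$ is a finite set: a trivialization of $L|_{f^{-1}(b)}$ realizes the restricted generating sections as elements of $H^{0}(f^{-1}(b),\mathcal{O})$, which is a product of copies of $\mathbb{C}$ indexed by connected components, so $\phi|_{f^{-1}(b)}$ is locally constant; conversely, if $\phi(f^{-1}(b))$ is finite then $f^{-1}(b)$ decomposes into clopen pieces each mapping to a single point, on which $L$ restricts as the pullback of $\mathcal{O}(1)$ from a point and is thus trivial. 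It therefore suffices to show $\pi$ has finite fibers over every $b\in B$.

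Since $f$ is flat and $B$ is integral, every irreducible component of $Z$ dominates $B$, and hence every irreducible component of $Z'$---being the $\Phi$-image of some component of $Z$---also dominates $B$. The hypothesis on $B_{0}$ together with the observation above shows that $\pi$ has finite fibers over $B_{0}$, so $\pi$ restricted to each component of $Z'$ is generically finite over $B$. With $B$ a smooth curve of dimension one, each such component is itself one-dimensional, and a proper dominant morphism from a one-dimensional irreducible variety to $B$ cannot have a positive-dimensional fiber (such a fiber would exhaust the source and contradict dominance onto $B$). Hence every fiber of $\pi$ is zero-dimensional, so $\pi$ is proper and quasi-finite, therefore finite, completing the curve case. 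The main obstacle is precisely preventing an irreducible component of $Z'$ from being trapped entirely over $B\setminus B_{0}$ with positive-dimensional fibers there (the blow-up scenario); flatness of $f$ is what rules this out, via the fact that components of a flat morphism over an integral base must dominate the base.
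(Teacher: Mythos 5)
Your proof is correct and follows essentially the same route as the paper's: reduce to a curve base, use the globally generated sections to map $Z$ to a projective bundle over $B$, observe that triviality of $L$ on a fibre is equivalent to finiteness of its image, and use flatness to force every component of the image to dominate the curve, so that the image is finite over all of $B$. The only cosmetic differences are that you normalize the auxiliary curve and spell out the clopen-decomposition argument for the equivalence, which the paper leaves implicit.
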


\begin{proof}
We can assume $B$ is affine. Since any two points of $B$ lie in an irreducible curve by {\cite[Lemma, Page 53]{mumford1974abelian}}, we can assume $B$ is an irreducible affine curve . Since $L$ is globally generated, we get a morphism $Z\,\xlongrightarrow\,\mathbb{P}^{n}_{B}$ over $B$ such that $\phi^{\ast}\mathcal{O}(1)\,=\, L$. Let $W\,=\, \phi(Z)$, the scheme-theoretic image of $\phi$. So, we have a commutative diagram:

\begin{center}
\begin{tikzcd}
 Z  \arrow[rr, "\phi"] \arrow[swap]{dr}{f} &  _{}  & W\arrow{dl}{g} \\[5pt]
 & B
\end{tikzcd}.
\end{center}
Since $f$ is flat, every irreducible component of $Z$ dominates $B$. So the same is true for $W$. As $\left. L \right|_{f^{-1}(b)}$ is trivial for all $b\,\in\,B_{0}$, we see that $g^{-1}(b)\,=\,\phi(f^{-1}(b))$ is a finite set for all $b\,\in\, B_{0}$. So, $g^{-1}(B_{0})\to  B_{0}$ is finite. As every irreducible component of $W$ dominate $B$ and $B$ is a curve, we see that $g$ is finite. So, $\left. L \right|_{f^{-1}(b)}$ is trivial for all $b\,\in \, B$.
\end{proof}

\begin{corollary}\label{autodescent}
Let 
\begin{equation*}
\begin{tikzcd} 
Z^{\prime} \arrow[r, "h^{\prime}"   ] \arrow[d,""]
&  Z\arrow[d , swap,"f"] \\
Y^{\prime}\arrow[r, "h"]
& |[, rotate=0]|  Y
\end{tikzcd}
\end{equation*}
be a fibre square of finite type $\mathbb{C}$-schemes, $Z, Y, Y^{\prime}$ varieties, $Z, Y$ projective, $f$ a flat contraction and $h$ is dominant. Let $\phi\,\in\,\emph{Aut}\,Z$, $\phi^{\prime}\,\in\, \emph{Aut}\,Z^{\prime}$, and $h^{\prime}\,\phi^{\prime}\,=\, \phi\,h^{\prime}$. Suppose $\phi^{\prime}$ descends to an automorphism of $Y^{\prime}$. Then $\phi$ descends to an automorphism of $Y$.
\end{corollary}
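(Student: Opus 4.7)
The plan is to reduce the problem to showing that $f \circ \phi : Z \to Y$ is constant on every fibre of $f$, and then invoke the universal property of the contraction $f$ (together with the standard argument for promoting a morphism $\psi : Y \to Y$ to an automorphism by also descending $\phi^{-1}$). Let $\psi' \in \textrm{Aut}(Y')$ be the given descent of $\phi'$, so that $f' \circ \phi' = \psi' \circ f'$, where $f' : Z' \to Y'$ is the base change of $f$. Since $Z' = Y' \times_Y Z$, the morphism $h'$ restricts, for every closed point $y' \in Y'$, to a scheme-theoretic isomorphism from $(f')^{-1}(y')$ onto $f^{-1}(h(y'))$. Combining this with the identity $h' \circ \phi' = \phi \circ h'$ and the descent $f' \phi' = \psi' f'$, one checks that $\phi$ sends $f^{-1}(h(y'))$ onto $f^{-1}(h(\psi'(y')))$. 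In particular, $f \circ \phi$ is constant on $f^{-1}(y)$ for every $y$ in $h(Y')$.

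Since $h$ is dominant, $h(Y')$ is constructible and dense in $Y$, hence contains a dense open subset $B_0 \subset Y$. The next step is to use Lemma \ref{family} to promote constancy over $B_0$ to constancy everywhere. Pick an ample line bundle $M$ on $Y$ and set
\[
N \,:=\, (f \circ \phi)^{\ast} M \,=\, \phi^{\ast} f^{\ast} M.
\]
Then $N$ is globally generated on $Z$, being the pullback under the automorphism $\phi$ of the globally generated sheaf $f^{\ast}M$. By the previous paragraph, $N|_{f^{-1}(y)}$ is trivial for every $y \in B_0$, so Lemma \ref{family}, applied to the flat proper morphism $f : Z \to Y$, yields triviality of $N|_{f^{-1}(y)}$ for every $y \in Y$.

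Now I would promote fibrewise triviality of $N$ to fibrewise constancy of $f \circ \phi$. The fibres of $f$ are projective and connected (the latter because $f$ is a contraction), so it suffices to rule out the existence of a curve $C \subset f^{-1}(y)_{\mathrm{red}}$ mapped non-constantly by $f \circ \phi$: if such $C$ existed, its image would be a curve $D \subset Y$, and ampleness of $M$ would give $\deg_C N = \deg(C \to D) \cdot (M \cdot D) > 0$, contradicting $N|_{C} \cong \mathcal{O}_C$. Hence $f \circ \phi$ is set-theoretically constant on every fibre of $f$.

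With this, the contraction property of $f$ (as recalled in Section 2) provides a morphism $\psi : Y \to Y$ with $f \circ \phi = \psi \circ f$. Running the same argument with $\phi^{-1}$ produces $\psi_1 : Y \to Y$ with $f \circ \phi^{-1} = \psi_1 \circ f$; surjectivity of $f$ then forces $\psi \psi_1 = \psi_1 \psi = \mathrm{id}_Y$, so $\psi \in \textrm{Aut}(Y)$ is the desired descent. The main obstacle is the third paragraph, i.e.\ turning the line-bundle statement produced by Lemma \ref{family} into the geometric statement that $f \circ \phi$ contracts fibres of $f$; this is where the ampleness of $M$ and the connectedness of fibres of a contraction are used in an essential way.
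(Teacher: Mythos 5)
Your proof is correct and takes essentially the same route as the paper: pull back an ample line bundle on $Y$ along $f\circ\phi$, observe that it is globally generated and trivial on the fibres of $f$ lying over the dense image of $h$, and apply Lemma \ref{family} to get triviality on all fibres, hence descent. You merely make explicit the steps the paper leaves implicit (extracting a dense open subset from the constructible set $h(Y')$, and converting fibrewise triviality of $\phi^{\ast}f^{\ast}M$ into the statement that $f\circ\phi$ contracts every fibre of $f$).
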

\begin{proof}
Let $L$ be a very ample line bundle on $Y$. It suffices to show that $f\,\phi(C)$ is a point whenever $f(C)$ is a point for a curve $C$ in $Z$. So, it is enough to show that $M\,:\,= \phi^{\ast}f^{\ast}L$ is trivial on fibres of $f$. Since $M$ is globally generated and is trivial on fibres of $f$ over points in $h(Y^{\prime})$, we are done by Lemma \ref{family}.
\end{proof}

\begin{corollary}\label{autodescent2}
Let the solid arrows of
\begin{center}
\begin{tikzcd}
Z^{\prime} \arrow[d,""'] \arrow[r,""] &
Z\arrow[d,"f"'] \arrow[ddr,bend left,"g"] \\
Y^{\prime} \arrow[r,"h"] \arrow[drr,bend right,""'] &
Y \arrow[dr,dashed,""] \\
&& W
\end{tikzcd}
\end{center}
form a commutative diagram of finite type $\mathbb{C}$-schemes, $Z, Y, Y^{\prime}, W$ varieties, $Z, Y, W$ projective, $f$ a flat contraction, $h$ is dominant and the square is a fibre square. Then the dotted arrow exists, making the whole diagram commute.
\end{corollary}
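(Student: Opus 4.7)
The plan is to show that $g:Z\to W$ is constant on every fibre of $f:Z\to Y$, and then invoke the universal property of the contraction $f$ to produce the desired dashed morphism $Y\to W$. Let $L$ be a very ample line bundle on $W$ and set $M:=g^{\ast}L$, a globally generated line bundle on $Z$.

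First, I would handle fibres over the dominant image of $h$. For a closed point $y'\in Y'$ set $y=h(y')$. Since the square is a fibre square, $h'$ restricts to a surjection from the fibre of the left vertical map over $y'$ onto $f^{-1}(y)$. The outer commutativity of the diagram forces $Z'\to W$ to factor through the map $Y'\to W$, so the fibre of $Z'\to Y'$ over $y'$ is carried to a single point of $W$ by $g\circ h'$. Surjectivity of $h'$ on this fibre then gives that $g(f^{-1}(y))$ is a single point. In particular, $M$ is trivial on $f^{-1}(y)$ for every $y$ in the constructible, dominant set $h(Y')$, which contains a dense open of $Y$.

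Next, I would apply Lemma \ref{family} to extend this to all fibres. Restricting to any affine open $U\subseteq Y$, the hypotheses of Lemma \ref{family} are met for $f\colon f^{-1}(U)\to U$ and $M|_{f^{-1}(U)}$, so $M$ is trivial on every fibre of $f$. Now fix any $y\in Y$ and $F:=f^{-1}(y)$; since $f$ is a proper contraction, $F$ is connected and $H^{0}(F,\mathcal{O}_F)=\mathbb{C}$. The restriction map $H^0(W,L)\to H^0(F,M|_F)=H^0(F,\mathcal{O}_F)=\mathbb{C}$ shows that every global section of $L$ is constant on $g(F)$; very ampleness of $L$ then forces $g(F)$ to be a single point. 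Hence $g$ contracts every fibre of $f$ to a point.

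The last two steps are standard. Since $g$ is constant on fibres of the contraction $f$, the morphism $g$ factors uniquely through $f$: locally, for an affine open $U\subseteq W$, the preimage $g^{-1}(U)$ is a union of fibres of $f$, hence of the form $f^{-1}(V)$ for an open $V\subseteq Y$, and $f_{\ast}\mathcal{O}_Z=\mathcal{O}_Y$ converts the ring map $\mathcal{O}(U)\to\mathcal{O}(g^{-1}(U))=\mathcal{O}(V)$ into a morphism $V\to U$; these glue to a morphism $\bar g\colon Y\to W$ with $g=\bar g\circ f$. Finally, one checks that $\bar g\circ h$ equals the given $Y'\to W$: both morphisms become equal after precomposing with the surjective map $f'\colon Z'\to Y'$ obtained from the fibre square, and surjectivity of $f'$ (together with separatedness of $W$) forces them to agree. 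The one place where some care is needed is the passage in Step~2 from pointwise triviality of $M|_F$ on a dense open to genuine set-theoretic contraction of every fibre, for which the combination of global generation of $M$, connectedness of fibres, and very ampleness of $L$ is essential.
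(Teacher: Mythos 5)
Your proof is correct and follows essentially the same route as the paper's: reduce to showing $M=g^{\ast}L$ is trivial on all fibres of $f$, verify this on fibres over the dense image of $h$ using the fibre square and the outer commutativity, and conclude by Lemma \ref{family} together with global generation of $M$. You simply spell out the details (connectedness of fibres, the factorization through the contraction, and the final compatibility check via surjectivity of $Z'\to Y'$) that the paper leaves implicit.
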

\begin{proof}
    Let $L$ be a very ample line bundle on $W$. It suffices to show that $g(C)$ is a point whenever $f(C)$ is a point for a curve $C$ in $Z$. So, it is enough to show that $M\,:\,= g^{\ast}L$ is trivial on fibres of $f$. Since $M$ is globally generated and is trivial on fibres of $f$ over points in $h(Y^{\prime})$, we are done by Lemma \ref{family}.
\end{proof}

Now we are ready to prove Theorems \ref{isotrivial} and \ref{non-isotrivial}.

\textit{Proof of Theorem \ref{isotrivial}:}
\begin{proof}

Let $m\,\geq\, 2$, $X^{m}\,\xlongrightarrow{p_{(m)}}\,C^{m}$ be the map induced by $p$. For $c\,\in \, C$ let $X_{c}\,=\, p^{-1}(c)$. Let $\phi^{\prime}$ be an automorphism of $S^{m}X$.  Our goal is to show that $\phi^{\prime}\,=\, {id}$.
By \cite{belmans2020automorphisms}, $\phi^{\prime}$ lifts to an automorphism $\phi\,:\, X^{m}\,\longrightarrow\, X^{m}$. By composing $\phi^{\prime}$ with a natural automorphism if necessary, assume $\phi|_{\Delta_X}\,=\, {id}$. Let $\phi_{i}\,:\, X^{m}\,\longrightarrow\, X$ be the components of $\phi$. We now divide the proof in several steps.

\underline{Step\,1}: We show that $\phi$ descends to an automorphism of $C^{m}$ under $p_{(m)}$. 

If $(i)$ holds, then the general fiber of $p_{(m)}$ is contracted by $p_{m}\,\circ\, \phi$, so $p_{(m)}$ descends to $C^m$ by Corollary \ref{autodescent2}. If $(ii)$ holds, then $p_{(m)}$ is the contraction part of the Albanese map of $X^m$, so $\phi$ descends to $C^{m}$. If $(iii)$ holds, then $p_{(m)}$ is the Iitaka fibration of $X^m$, so $\phi$ descends to $C^{m}$. Applying this to $\phi^{-1}$ also, we see that $\phi$ descends to an automorphism $\psi$ of $C^{m}$. As $\phi$ descends to $S^{m}X$ and $\phi|_{{\Delta_{X}}}\,=\, {id}$, we see that $\psi$ descends to $S^{m}C$ and $\psi|_{{\Delta_{C}}}\,=\, {id}$,

\underline{Step\,2}: We show that, possibly precomposing $\phi$ and $\psi$ with an element of $S_m$, $\psi\,=\, {id}$. That is, $\phi$ is over $C^{m}$. 

We consider 2 cases separately.

\underline{Case\,1}: $C=\mathbb{P}^1$ or $g(C)\,\geq\,2$

By Lemma $\ref{autoproduct}$, after precomposing $\phi$ and $\psi$ with an element of $S_m$, there are $\tau_{i}\,\in\, \textrm{Aut}(C)$ for $1\,\leq\,i\,\leq\,m$ such that $\psi(\underline{c})\,=\, (\tau_{1}(c_{1}),\ldots,\, \tau_{m}(c_{m}))$ for all $\underline{c}\,\in\, C^{m}$. As $\psi_{|_{\Delta_{c}}}\,=\,id$, we get $\psi\,=\, {id}$.

\underline{Case\,2}: $g(C)\,=\, 1$. 
As $C$ is an elliptic curve, we are done by Corollary \ref{nounit}.   

\underline{Step\,3}: So, we have a commutative diagram

\begin{center}
\begin{tikzcd}
 X^{m}  \arrow[rr, "\phi"] \arrow[swap]{dr}{p_{(m)}} &  _{}  & X^{m}\arrow{dl}{p_{(m)}} \\[5pt]
 & C^{m}
\end{tikzcd}.
\end{center}
The commutativity of the above diagram shows that:
\begin{equation}\label{commutativity}
p\,\phi_{i}(\underline{x})\,=\, p(x_{i})    
\end{equation}
for all $i$ and $\underline{x}\,\in\, X^{m}$. For $1\,\leq\,i\,\leq m$, let $Z_{i}\,=\, C^{i-1}\,\times\,X\,\times \, C^{m-i}$. We have $X^{m}\,\xlongrightarrow{\pi_i}\, Z_{i}$ over $C^{m}$ given by $\pi_{i}=p_{(i-1)}\times id \times p_{(m-i)}$. The morphism $\pi_{i}$ is a flat contraction as $\pi_{i}$ is a product of flat contractions $p$ and $id$. In this step, we prove the following claim.  

\noindent\underline{Claim}: The morphism $\phi$ descends to an automorphism of $Z_{i}$ for all $i$.

\noindent\underline{Proof}: As $p$ is isotrivial with general fiber $F$, there is a smooth curve $D$, not necessarily proper, and a dominant map $D\,\xlongrightarrow{h}\,C$ such that $X_{D}\,:=\, X\,\times_{C}\,D\,\cong\,F\,\times\, D$ over $D$. Let $p_D$ be the base change of $p$ under $h$, $Z_{i, D}$, $\phi_{D}$ be the base change of $Z_{i}$ and $\phi$ to $D^{m}$ respectively, under the map $D^{m}\,\xlongrightarrow{h_{(m)}}\, C^{m}$ induced by $h$, and $\phi_{i,D}$ the components of $\phi_D$ for $1\leq i\leq m$. We see that $\phi_{D}$ descends to $S^{m}X_{D}$, and $\phi_{D}|_{\Delta_{X_{D}}}\,=\, {id}$. By Lemma $\ref{open}$ applied to $X_{D}$, two cases can occur.

\underline{Case 1}: $\phi_{D}$ descends to an automorphism of $Z_{i,D}$. In this case, the claim follows from Lemma \ref{autodescent}.

\underline{Case 2}: $m\,=\, 2$, there are maps $D\,\times\, X_{D}\,\xlongrightarrow{\psi_{1,D}}\, X_{D}$, and $X_{D}\,\times\,D\,\xlongrightarrow{\psi_{{2},D}}\,X_{D}$ such that 
\begin{equation*}
\phi_{1,D}(x_{1}, x_{2})\,=\, \psi_{1,D}(p(x_{1}), x_{2}),    
\end{equation*}
\begin{equation*}
\phi_{2,D}(x_{1},x_{2})\,=\, \psi_{2,D}(x_{1}, p(x_{2})),    
\end{equation*}
for all $x_{1},x_{2}\,\in\,X_{D}$.
We have a commutative diagram:

\begin{center}
\begin{tikzcd}
X_{D}\,\times\, X_{D} \arrow[d,"p_{D}\times{id}"'] \arrow[r,""] &
X\,\times\,X \arrow[d,"p\times{id}"'] \arrow[ddr, "\phi_{1}"] \\
D\,\times\, X_{D} \arrow[r,""]  \arrow[dr,"\psi_{1,D}"'] &
C\,\times\, X  \\
& X_{D} \arrow[r,""'] 
& X
\end{tikzcd},
\end{center}
where the square is a fiber square. By Lemma \ref{autodescent2}, $\phi_{1}$ factors through a map $\psi_{1}\,:\, C\,\times\, X\,\longrightarrow\, X$. Similarly, $\phi_{2}$ factors through a map $\psi_{2}\,:\, X\,\times\, C\,\longrightarrow\, X$. In other words, 
\begin{equation}\label{phipsi}
\phi_{1}(x_{1}, x_{2})\,=\, \psi_{1}(p(x_{1}),x_{2}) \quad\textrm{and}\quad  \phi_{2}(x_{1},x_{2})\,=\, \psi_{2}(x_{1}, p(x_{2}))
\end{equation}
for all $x_{1}, x_{2}\,\in\, X$. By \eqref{commutativity}, we have:

\begin{equation}\label{commutativity2}
p\, \psi_{1}(c,x)\,=\, c\,=\, p\, \psi_{2} (x,c)    
\end{equation}
for all $c\,\in \,C$ and $x\,\in\, X$.

As $\phi$ descends to $S^{2}X$, we have:
\begin{equation*}
\phi_{1}(x_{2},x_{1})\,=\, \phi_{2}(x_{1}, x_{2}),   
\end{equation*}
for all $x_{1}, x_{2}\,\in\, X$. By $\eqref{phipsi}$, $\psi_{1}(p(x_{1}),x_{2})\,=\, \psi_{2}(x_{2},p(x_{1}))$ for all $x_{1}, x_{2}\,\in\, X$. This means 
\begin{equation*}
\psi_{1}(c,x)\,=\, \psi_{2}(x,c)    
\end{equation*}
for all $c\,\in\, C$ and $x\,\in\, X$. For $c\,\in\,C$, let $\psi_{c}\,:\, X\,\longrightarrow\, X$
be given by $\psi_{c}(x)\,=\,\psi_{1}(c, x)\,=\, \psi_{2}(x,c)$. By \eqref{commutativity2}, $p\,\psi_{c}(x)\,=\,c$, so, $\psi_{c}(X)\,\subset\, X_{c}$. 

By \eqref{phipsi}, we have:
\begin{equation*}
\phi(x_{1},x_{2})\,=\, (\psi_{p(x_{1})}(x_{2}), \psi_{p(x_{2})}(x_{1}))    
\end{equation*}
for all $x_{1}, x_{2}\,\in\, X$. For $c_{1},c_{2}\,\in\,C$, the restriction of $\phi$ to $X_{c_{1}}\,\times\, X_{c_{2}}$ is an isomorphism and it is given by 
\begin{equation*}
\phi(x_{1},x_{2})\,=\, (\psi_{c_{1}}(x_{2}), \psi_{c_{2}}(x_{1})).    
\end{equation*}
It follows that $\psi_{c_{1}}\,:\, X_{c_{2}}\,\longrightarrow\,X_{c_{1}}$ is an isomorphism, for any $c_{1}, c_{2}\,\in\, C$.

Now define
\begin{equation*}
X\,\xlongrightarrow{\Phi}\, C\,\times\, F
\end{equation*}
\begin{equation*}
x\,\longmapsto\, (p(x), \psi_{c_{0}}(x))    
\end{equation*}
Choose general $c_{0}\,\in\, C$, identify $X_{c_{0}}\,=\,F$. Note that $\Phi$ is a morphism over $C$, and over each point of $C$, $\Phi$ is an isomorphism. So, $\Phi$ is an isomorphism. So, $(1)$ of Theorem \ref{4} holds, a contradiction. This finishes the proof of the claim.

\underline{Step 4}: We now complete the proof.

Let $\psi_{1}\,:\, Z_{1}\,\longrightarrow\, Z_{1}$ be the descent of $\phi$, $\psi_{11}\,:\, Z_{1}\,\longrightarrow\,X$ the first component of $\psi_{1}$. So,
\begin{equation}\label{trans}
\phi_{1}(\underline{x})\,=\, \psi_{11}(x_{1}, p(x_{2}),\ldots, p(x_{m})).
\end{equation}
Also, as $\phi{|_{\Delta_{X}}}\,=\, {id}$, we have
\begin{equation}\label{diagonal}
\psi_{11}(x, p(x),\ldots,p(x))\,=\, x    
\end{equation}
for all $x\,\in\, X$. By \eqref{commutativity}, and \eqref{trans}, $p\,\psi_{11}(x_{1}, c_{2},\ldots, c_{m})\,=\, p(x_{1})$. 

So, we can define a morphism $C^{m-1}\,\xlongrightarrow{T}\, \textrm{Aut}_{C}X$ given by
\begin{equation*}
T(c_{2},\ldots\, c_{m})(x_{1})\,=\, \psi_{11}(x_{1},c_{2},\ldots, c_{m}).    
\end{equation*}
As $(4)$ of Theorem \ref{4} does not hold, by Lemma \ref{equivalence}, $T$ is constant. By \eqref{diagonal}, $T(\underline{c})\,=\, {id}$ for all $\underline{c}$. So, $\psi_{11}(x_{1},c_{2},\ldots,c_{m})\,=\,x_{1}$. So, $\phi_{1}(\underline{x})\,=\, x_{1}$ for all $\underline{x}$.

Similarly, $\phi_{i}(\underline{x})\,=\, x_{i}$ for all $\underline{x}$. So, $\phi\,=\, id$.
\end{proof}

\textit{Proof of Theorem \ref{non-isotrivial}:}
\begin{proof}
Let $X^{m}\,\xlongrightarrow{p_{(m)}}\, C^{m}$ be the map induced from $p$. For $c\,\in \, C$, let $X_{c}\,=\, p^{-1}(c)$. Given an automorphism of $S^{m}X$, we lift it to an automorphism $X^{m}\,\xlongrightarrow{\phi}X^{m}$ by \cite{belmans2020automorphisms}.  Similar to Steps $1$ and $2$ of Theorem \ref{isotrivial}, we can asssume $\phi$ is over $C^{m}$ and $\phi_{|_{\Delta_{X}}}\,=\,{id}$. Let $\phi_{i}\,:\, X^{m}\,\longrightarrow\, X$ be the components of $\phi$. As in $\eqref{commutativity}$, we have 
\begin{equation*}
p\,\phi_{i}(\underline{x})\,=\, p(x_{i}).    
\end{equation*}
As $p$ is not isotrivial, there are uncountably many distinct isomorphism classes of fibres of $p$. So, by \cite[Theorem 2]{Mae}, for a very general $x_{1}\,\in\, X$, there is no non-constant map $X^{m-1}\,\longrightarrow\, X_{p(x_{1})}$. So, the map 
$\phi_1 \big|_{\{x_1\} \times X^{m-1}}
\,:\,X^{m-1}\,\longrightarrow\, X_{p(x_{1})}\,\longrightarrow\,X$ must be constant. By rigidity lemma (\cite[Page 40]{Mum}), there is $\overline{\phi_{1}}\,:\, X\,\longrightarrow\, X$ such that $\phi_{1}(\underline{x})\,=\,\overline{\phi_{1}}(x_{1})$ for all $\underline{x}$. As $\phi{|_{\Delta_{X}}}\,=\, {id}$, we see that $\overline{\phi_{1}}\,=\, {id}$. So, $\phi_{1}(\underline{x})\,=\, x_{1}$, for all $\underline{x}$. Similarly, $\phi_{i}(\underline{x})\,=\,x_{i}$ for all $\underline{x}$. So, $\phi\,=\, id$.
\end{proof}

\textit{Proof of Corollary \ref{c1} :}
If $(a)$ or $(d)$ of Theorem \ref{1} holds, then $\textrm{Hilb}^m(X)$ has a non-natural automorphism by Theorem \ref{1}. So suppose $(a)$ and $(d)$ do not hold. We want to show every automorphism $\phi$ of $\textrm{Hilb}^m(X)$ is natural. If $\kappa(X)=2$, we are done by\cite{belmans2020automorphisms}. So assume $\kappa(X)=1$. Now we get $\phi$ preserves the big diagonal by the same proof as {\cite[Theorem 3.3]{Ha}}, noting that the movable part of $NK_X$ is semi-ample for large enough $N$, by abundance for surfaces. So we are done by Theorem \ref{1}.

\begin{remark}\label{determine1}
    By the same proof as Corollary \ref{c1} or {\cite[Theorem 3.3]{Ha}}, for smooth projective surfaces $X$ and $Y$ of Kodaira dimension $\geq 1$, any isomorphism $\textrm{Hilb}^{m}X\to \textrm{Hilb}^{m}Y$ carries the big diagonal to the big diagonal, hence $X\cong Y$ by Remark \ref{determine}. Thus, $X$ is determined by $\textrm{Hilb}^{m}X$.
\end{remark}
\textit{Proof of Theorem \ref{3} $(ii):$}

\begin{proof}
    This is essentially the same as the proof of Theorem \ref{isotrivial}. We follow the same proof and notations with the $\mathbb{P}^r$-bundle $X\,\xrightarrow{p}\,Y$ in place of $X\,\xrightarrow{p}\,C$. As hypothesis $(i)$ of Theorem \ref{isotrivial} is satisfied, Steps $1$ and $2$ go as it is. Step $4$ also works: as $\textrm{Aut}_{Y}X$ has no complete subvariety, any map $Y^{m-1}\,\xlongrightarrow{T}\, \textrm{Aut}_{Y}X$ must be constant. It remains to show the Claim in Step $3$.

    If $X\,\cong\,\mathbb{P}^r\,\times\, Y $ over $Y$, then $\phi$ gives a map $Y^m\,\to \,\text{Aut}(\mathbb{P}^r)^m$, which must be constant as $\text{Aut}(\mathbb{P}^r)^m$ is affine. Now we get $m=2$ as in Case 1 in the proof of Lemma \ref{open}, a contradiction to our assumption. 

    Let $q_i:Y^m\to Y$ be the projections, $\mathcal{E}$ a vector bundle on $Y$ such that $X=\mathbb{P}(\mathcal{E}).$ Let $\mathcal{E}_i\,=\,q_i^*\mathcal{E}.$ As $\mathcal{E}$ is not trivial up to line bundle twist by the last paragraph, $\mathbb{P}(\mathcal{E}_i)$'s are pairwise nonisomorphic over $Y^m$. Now the claim follows by {\cite[Lemma 2.6]{BSV4}}, noting that $X^m$ is the multiprojective bundle $\mathbb{P}(\mathcal{E}_1,...,\mathcal{E}_m)$ over $Y^m.$
\end{proof}
\begin{remark}
For a smooth projective variety $X$ of dimension $\geq 2$ and $m\geq 2$, let $G_m(X)$ be the kernel of the natural surjection $\textnormal{Aut} (S^mX)\,\longrightarrow\, \textnormal{Aut} (X)$. As in Theorem \ref{3}, $\textnormal{Aut} (S^mX)$ is a semidirect product of $G_m(X)$ and $\textnormal{Aut} (X).$ By {\cite[Remark 3.1]{BSV3}}, we have $\textnormal{Aut}^{0} (S^mX)=\textnormal{Aut}^{0} (X)$, hence $G_m(X)$ is a discrete group. It is interesting to study this group for higher dimensional $X.$ For example, by a similar argument as in Lemma \ref{unit}, one can see that for an abelian variety $X$ of dimension $\geq 2$, $G_m(X)$ is the congruence subgroup $\{\alpha \in \textrm{End}(X)^{\times}\,|\, m \text{ divides } 1\,-\,\alpha \text{ in }\textrm{End}(X)\}. $ For smooth projective surfaces, it is not difficult to show that the examples of non-natural automorphisms we gave in $(1)-(4)$ of Theorem \ref{4} are essentially the only examples.
\end{remark}

\section{Acknowledgements}\vskip 3mm We thank János Kollár, Joaquin Moraga, Bhargav Bhatt, the authors of \cite{belmans2020automorphisms}, Daniel Erman, Michele Graffeo, and Anthony Iarrobino for insightful discussions.

\printbibliography
\end{document}